\documentclass[10pt,A4paper,fleqn]{amsart}
\linespread{1.2}
\usepackage{mathrsfs}
\usepackage{amsfonts}
\usepackage{txfonts}
\usepackage{amsmath}
\usepackage{amssymb}
\usepackage{amsthm}
\usepackage[pdftex]{graphicx}
\usepackage[toc,page,title,titletoc,header]{appendix}
\usepackage{geometry}
\usepackage{upgreek}
\usepackage[T1]{fontenc}
\usepackage{color}
\usepackage{hyperref}
\usepackage[all]{xy}
\usepackage[french,english]{babel}

\textheight220mm \textwidth150mm

\theoremstyle{plain}
\newtheorem{theo}{Theorem}[section]
\newtheorem*{theo*}{Theorem}
\newtheorem{prop}{Proposition}[section]

\newtheorem{cor}{Corollary}[section]
\newtheorem*{lem*}{Lemma}

\theoremstyle{definition}
\newtheorem{lem}{Lemma}[section]

\newtheorem{question}{Question}[section]

\theoremstyle{remark}

\newtheorem*{rem*}{Remark}
\newtheorem{rem}{Remark}[section]

\newtheorem{claim}{Claim}[section]

\newcommand{\R}{\mathbb{R}}
\newcommand{\T}{\mathbb{T}}
\newcommand{\Z}{\mathbb{Z}}
\newcommand{\C}{\mathbb{C}}

\begin{document}

\bibliographystyle{alpha}

\abstract 

In this article, we investigate the mathematical part of De Sitter's theory on the Galilean satellites, and further extend this theory by showing the existence of some quasi-periodic librating orbits by applications of KAM theorems. After showing the existence of De Sitter's family of linearly stable periodic orbits in the Jupiter-Io-Europa-Ganymede model by averaging and reduction techniques in the Hamiltonian framework, we further discuss the possible extension of this theory to include a fourth satellite Callisto, and establish the existence of a set of positive measure of quasi-periodic librating orbits in both models for almost all choices of masses. 

\endabstract

\title{De Sitter's Theory of Galilean Satellites and the related quasi-periodic orbits}

\author{Henk Broer, Lei Zhao}

%\thanks{Supported by }

\date\today
\maketitle

\tableofcontents

\section{Introduction}
Based on the methods of studying celestial motions of planets by pure Keplerian approximations with additional corrections (called \emph{inequalities}), formalizations of a mathematical theory of the Jovian Galilean satellites has been facing great difficulty for a long time. Indeed, even though this five-body system seems to be very much like a rescaled model of the inner solar system, and the mean motion, \emph{i.e.} the Keplerian elliptic orbital frequency of Callisto is not close to lower-order resonance with the three inner satellites, the mean motions of Io, Europa and Ganymede are close to a $1:2:4$ resonance, which give rise to inequalities greatly effecting the real behaviors of the system. 

In terms of averaging theory, we understand that the averaged system along this resonance differs by the corresponding (first-order) secular system, which is the averaged system of the perturbation over the three fast Keplerian motions, by some \emph{critical terms} (\emph{i.e.} resonant terms), corresponding to the $1:2$ mean motion resonances of Io-Europa and Europa-Ganymede. The order of these resonance is so low such that these critical terms, in contrast to the critical term of high-order resonances, \emph{e.g.} the $2:5$ resonance roughly satisfied by the mean motions of Jupiter and Saturn, dominates, instead of being dominated by, the secular system. 
Moreover, not being first integrals of a first order approximating system (obtaining by adding the corresponding critical terms to the Keplerian part), the variations of the semi major axes, and therefore the contribution of the Keplerian part to the dynamics of the semi-fast variables near the resonances could also be significant.

It is therefore important to analyze the dynamics of the first order approximating system obtained by adding up the corresponding critical terms, and to investigate whether it can be qualitatively continued when higher order effects are taken into account. These, together with a sufficient approximation of the normalizing transformation involved in the averaging procedure, provide a good theory for their celestial motions. This summarizes the strategy of W. de Sitter's theory \cite{DeSitter1925}, \cite{DeSitter1931} of the Jovian Galilean satellites: Io, Europa, Ganymede and Callisto. 

As a starting point, De Sitter neglected Callisto and only considered a planar 4-body problem modeling the Jupiter-Io-Europa-Ganymede system. In modern terms, we may interpret this as follows: After being averaged over the $1:2:4$-resonance, truncated at the first orders of the eccentricities and of the masses, and further reduced by the rotational$SO(2)$-symmetry of this system, he established the existence of several families of periodic solutions with non-circular but almost-circular Keplerian ellipses, parametrized by one of the eccentricities of the satellites. Along these orbits, the semi major axes and the eccentricities of the satellites do not change. In a fixed reference frame, the pericentres (or ``perijoves'' when the massive body is called ``Jupiter'') of the satellites process uniformly. De Sitter found this to be an important advantage of his theory (\cite[p. 8]{DeSitter1925}). Indeed, in comparison with a theory built only on pure Keplerian motions, his first-order approximating system successfully removed the dynamical degeneracy of the uncoupled Keplerian systems that all their bounded orbits are closed, and very much dominates the local dynamics of the higher-order approximating systems and the full problem. Moreover, by a continuation method of Poincaré, all these families of periodic orbits admit continuations to higher-order approximating systems and to the full system. Only one of these families of periodic solutions is found to be linear stable, to which, with properly assigned parameters, models and interprets the real motions of Io, Europa, and Ganymede in our solar system.

\begin{theo}(De Sitter, \cite{DeSitter1909})\label{Theo: 1.1} In the planetary $4$-body problem with one mass sufficiently dominates the others, after symplectically reduced by the translation and rotational symmetry, there exists a family of linearly stable periodic orbits with $4:2:1$-mean motion resonance. 
\end{theo}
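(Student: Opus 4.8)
The plan is to obtain De Sitter's orbits as relative equilibria of a suitably averaged and truncated Hamiltonian, to read off their linear stability from a further reduced system, and then to continue them to the full translation- and rotation-reduced $4$-body problem by Poincar\'e's method.

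First I would write the planar planetary $4$-body Hamiltonian in Jacobi coordinates, so that the centre-of-mass reduction is built in, and pass to Poincar\'e's regular elliptic variables $(\Lambda_j,\lambda_j,\xi_j,\eta_j)$, $j=1,2,3$, in which $H=H_{\mathrm{Kep}}(\Lambda)+\mu H_1+O(\mu^2)$ with $\mu$ the small satellite-to-primary mass ratio; here $H_{\mathrm{Kep}}$ generates the three fast Keplerian flows with frequencies $n_j=\partial H_{\mathrm{Kep}}/\partial\Lambda_j$, and the rotational $SO(2)$-symmetry appears as conservation of the total angular momentum $C=\sum_j\bigl(\Lambda_j-\tfrac12(\xi_j^2+\eta_j^2)\bigr)$. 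Near the resonance $n_1-2n_2$ and $n_2-2n_3$ are small, and I would perform a Lie-series normalization attached to the corresponding resonance module: a $\mu$-close-to-identity symplectic transformation after which $H=H_{\mathrm{Kep}}(\Lambda)+\mu\overline H_1+O(\mu^2)$, with $\overline H_1$ depending on the mean longitudes only through the two first-order critical angles $\sigma_1,\sigma_2$ (linear in the $(\xi_j,\eta_j)$ by the d'Alembert rules). Working near the exact resonant torus $\{\Lambda=\Lambda^\ast\}$, rescaling so that the Keplerian terms quadratic in $\Lambda-\Lambda^\ast$ balance the $O(\mu)$ critical terms---the natural $\sqrt\mu$ scaling of the resonant block---and truncating at first order in the eccentricities (keeping the Laplace--Lagrange secular quadratic part together with the eccentricity-linear critical terms, exactly as De Sitter does) yields a polynomial Hamiltonian $K$ in the Poincar\'e variables.

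Because $\overline H_1$ is invariant under the physical rotational $S^1$ and, after the averaging, in addition under the $S^1$-flow of the non-critical mean longitude, I would reduce $K$ by this $T^2$-action to a Hamiltonian $\widehat K$ and look for its equilibria. These are precisely De Sitter's configurations: the critical angles $\sigma_1,\sigma_2$ stationary (a $4:2:1$ resonance lock) and all three apsidal lines precessing at one common rate, so that nothing moves in the appropriate uniformly rotating frame. The critical-point equations of $\widehat K$ display the branches of such equilibria, parametrized by one of the eccentricities; linearizing $\widehat K$ along such a branch, the quadratic form splits, to leading order in the scaling, into the two critical degrees of freedom and the secular ones, and the characteristic exponents can be computed in closed form. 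The content of the theorem is that on exactly one branch all exponents are purely imaginary and simple---for instance because the Hessian of $\widehat K$ is sign-definite there transversally to the group orbit---so that the equilibrium is elliptic and the whole branch is linearly stable, whereas the remaining branches carry hyperbolic directions. Undoing the $S^1$-reduction of the non-critical longitude turns this branch of elliptic equilibria into a one-parameter family of periodic orbits (relative equilibria for the physical rotation) of the averaged truncated system; a Poincar\'e continuation with $\sqrt\mu$ as small parameter---the non-trivial multipliers being bounded away from $1$ and, along a one-parameter continuation, remaining on the unit circle---then produces, via the implicit function theorem applied to the isoenergetic return map on a fixed angular-momentum level, a family of linearly stable periodic orbits of the fully translation- and rotation-reduced planar planetary $4$-body problem with $4:2:1$ mean-motion resonance.

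The main obstacle is the averaging step at this very low-order resonance. As stressed in the introduction, the critical terms here are of the same size as---and are not dominated by---the secular terms, and the semi-major axes genuinely oscillate, so the Keplerian part contributes non-trivially to the semi-fast dynamics; one cannot treat the problem as a perturbation of the pure secular Laplace--Lagrange system, and must carry the whole resonant block with exactly the right $\sqrt\mu$ scaling for the truncated normal form $K$ to be an honest first approximation whose relevant orbits persist under the discarded higher-order terms. The second delicate point is checking that the single stable branch is genuinely elliptic: this forces a concrete eigenvalue computation whose outcome depends on the detailed structure of the quadratic form and, ultimately, on the admissible ranges of the masses---in contrast to the rest of the argument, which is essentially systematic.
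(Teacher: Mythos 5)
Your proposal follows essentially the same route as the paper: average over the $1{:}2{:}4$ resonance keeping only the eccentricity-linear critical terms, reduce by the rotation and by the remaining fast longitude, characterize De Sitter's orbits as equilibria with locked critical angles and co-precessing pericentres, identify the unique linearly stable branch by an explicit eigenvalue computation (the paper does this via two quadratic equations after a $\sqrt{\mu^{3}}$-rescaling, not via sign-definiteness of the Hessian, which in fact fails here since the Keplerian block is negative definite while the conditions $s,s'>0$ are required on the angle block), and continue by Poincar\'e's method using nondegeneracy of the Keplerian Hessian and of the critical point. The differences (Jacobi versus canonical Joviancentric coordinates, Poincar\'e variables versus modified Delaunay) are cosmetic and do not affect the argument.
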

%Aiming at establish a new theory of Jovian Galilean satellites, with the methods of Poincaré \cite{Poincare1892}, W. De Sitter found several families of periodic orbits\footnote{Very often in this note, orbits are called ``periodic'' if they are such in proper rotating frames.  In other words, orbits are called periodic if they give rise to periodic orbits of the system reduced by the SO(2)-symmetry.} of the $4$-body problem in \cite{DeSitter1909}, in which the only stable one models the Jupiter-Io-Europa-Ganymede system. This periodic orbit, together with a (uncoupled) circular orbit of Callisto, served as the intermediate orbit in his complete theory of the Galilean satellites (\cite{DeSitter1925}).

In this article, we shall first represent these periodic orbits of the planar $4$-body problem with some modern explanations and clarifications of this work \cite{DeSitter1909} of De Sitter. For comparison purpose, we shall also treat what we would call $1+3$-body problem of one fixed center and three satellites in parallel and establish the same result. Following De Sitter, we shall, after being reduced by the $SO(2)$-symmetry of rotations, first find particular periodic solutions of an approximating system, verify that they can be continued to periodic orbits of the full system, and identify the only linearly stable periodic solutions.

%We aim to further extend our analysis to the mathematical aspect of his theory of Galilean satellites.

In addition to this, for almost all masses, we shall establish the existence of librating quasi-periodic KAM orbits around the linearly stable family of periodic orbits. Since the real motions of Io, Europa, and Ganymede only \emph{roughly} satisfy the $1:2:4$-resonance, these quasi-periodic orbits could serve to complete De Sitter's theory of these satellites.  

\begin{figure}\label{Fig: collinearity}
\centering
\includegraphics[width=3in]{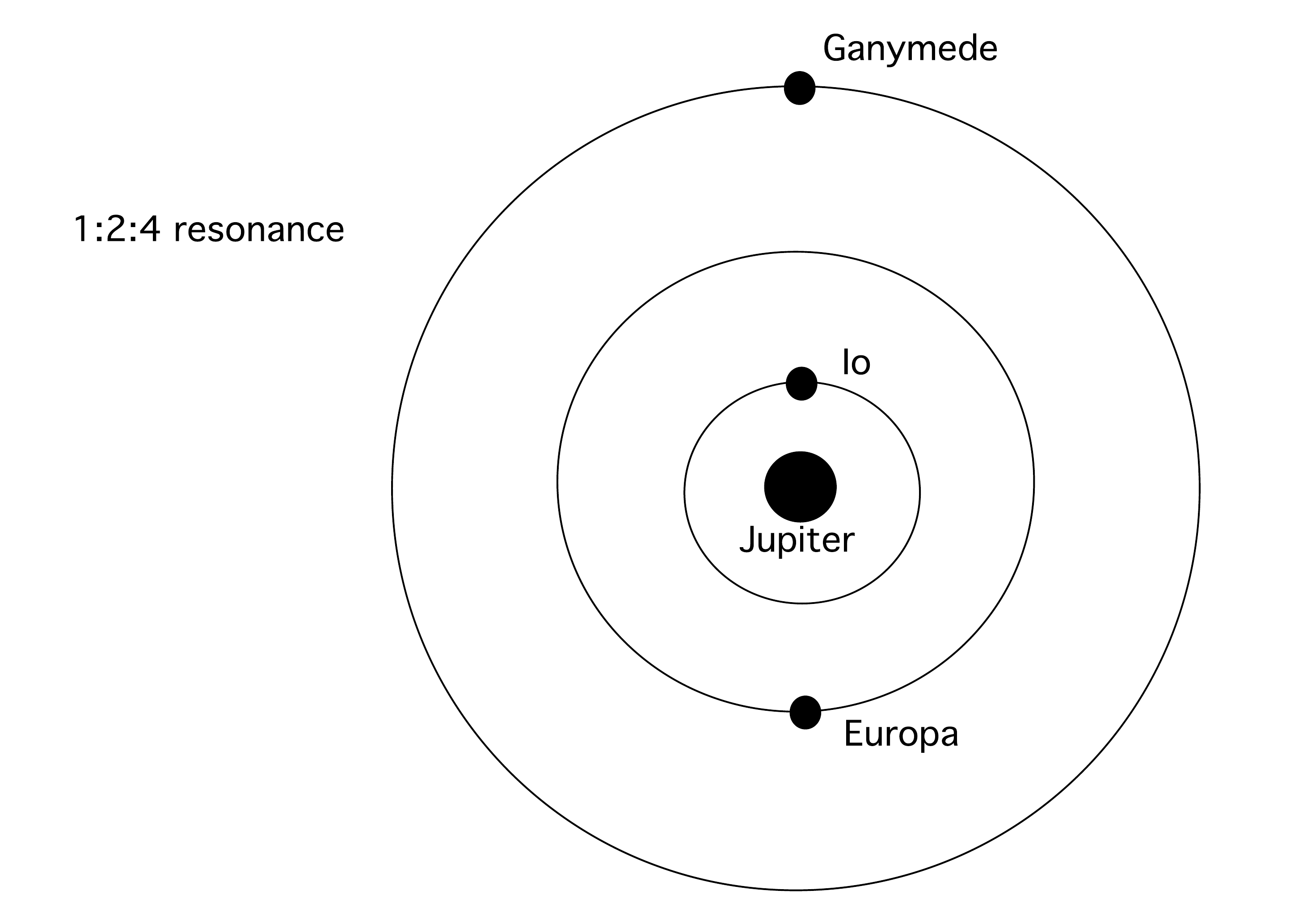}
\caption{Jupiter-Io-Europa-Ganymede system at a collinearity}
\end{figure}

\begin{theo} In both $1+3$-body problem or $4$-body problem symplectically reduced by the translations and rotations,  for almost all masses among which one sufficiently dominates the others, there exists a set of positive measure of quasi-periodic orbits librating around the family of linearly stable periodic orbits as indicated in Thm \ref{Theo: 1.1}.
\end{theo}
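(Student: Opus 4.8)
The plan is to apply a KAM theorem to the family of linearly stable periodic orbits produced in Theorem \ref{Theo: 1.1}, in the reduced phase space where the $SO(2)$-symmetry has been removed. The strategy proceeds in four steps. First, I would set up action--angle-type coordinates adapted to this family: using the reduced Hamiltonian $H = H_{\mathrm{Kep}} + \varepsilon H_{\mathrm{pert}}$ near the periodic orbit, I would first pass to a rotating frame whose angular velocity matches the common perijove precession rate, so that the periodic orbit becomes a relative equilibrium. In a neighbourhood of this equilibrium, the linearized dynamics is, by linear stability, elliptic: there are (in the reduced $4$-body system) three elliptic normal modes with purely imaginary eigenvalues $\pm i\omega_1, \pm i\omega_2, \pm i\omega_3$, the longitudinal (semi-major axis/mean-longitude) degrees of freedom having been partially fixed by the $4:2:1$ resonance constraint and the reduction. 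Birkhoff normalization around the equilibrium then yields local symplectic coordinates $(I_1,\varphi_1,I_2,\varphi_2,I_3,\varphi_3)$ in which $H = \text{const} + \sum_j \omega_j I_j + \tfrac12 \sum_{j,k} \tau_{jk} I_j I_k + O(|I|^3)$, i.e. a properly degenerate but Birkhoff-nondegenerate integrable approximation whose invariant tori are the librational tori we want.

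Second, I would invoke an isoenergetic or Rüssmann-type KAM theorem (as in the versions of Broer--Huitema--Sevryuk or Rüssmann, which require only nondegeneracy of the frequency map together with the torsion) to conclude that a positive-measure Cantor set of these Lagrangian tori survives the higher-order terms $O(|I|^3)$ and the non-normalized remainder of the averaging/reduction procedure. The tori so obtained are exactly the quasi-periodic orbits librating around the De Sitter periodic family: on them the perijoves, eccentricities and resonant combination of mean longitudes oscillate quasi-periodically with small amplitude about their De Sitter values. The same argument applies verbatim to the $1+3$-body model, the only change being the bookkeeping of the number of reduced degrees of freedom.

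Third comes the verification of the KAM nondegeneracy conditions, which is where the real work lies and where the phrase ``for almost all masses'' enters. The Birkhoff coefficients $\omega_j = \omega_j(m_0,m_1,m_2,m_3)$ and the torsion matrix $\tau_{jk}=\tau_{jk}(m_0,\dots,m_3)$ are real-analytic (indeed algebraic, after the relevant expansions in eccentricities) functions of the mass parameters on the open set where one mass dominates. It therefore suffices to exhibit one choice of masses for which the frequency--torsion map is nondegenerate in the chosen KAM sense; then nondegeneracy fails only on a proper analytic subvariety, hence on a set of measure zero, which yields the "almost all masses" statement. I would establish nondegeneracy at a conveniently chosen symmetric or limiting configuration --- e.g. taking the satellite masses to $0$ and rescaling, where the normal frequencies and torsion can be computed from the leading critical-term Hamiltonian written down in De Sitter's reduction --- and check $\det(\partial\omega/\partial I)\neq 0$ (or Rüssmann's weaker condition that the image of the frequency map is not contained in any hyperplane) there.

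The main obstacle I anticipate is precisely this nondegeneracy computation: the reduced Hamiltonian is properly degenerate (the Keplerian part contributes only through the slow secular/resonant dynamics), so the Kolmogorov twist must be extracted from the second-order Birkhoff terms of the critical-term-plus-secular Hamiltonian, and one must be careful that the $4:2:1$ resonance does not force extra linear relations among $\omega_1,\omega_2,\omega_3$ (which would obstruct Birkhoff normalization) for generic masses. A secondary technical point is ensuring the averaging transformation bringing the true Hamiltonian to the normalized form is analytic and close enough to the identity on a fixed complex neighbourhood of the periodic family, uniformly for masses in a compact subset of the admissible region, so that the KAM smallness condition is met; this is standard once Theorem \ref{Theo: 1.1} and its underlying normal-form estimates are in place.
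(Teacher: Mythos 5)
Your overall plan (normalize around the linearly stable periodic family, then apply a KAM theorem to the resulting integrable approximation) has the right shape, but it contains a genuine gap at the decisive step: the verification of the nondegeneracy condition. You propose to extract the Kolmogorov twist from the second-order Birkhoff invariants $\tau_{jk}$ and to check $\det(\partial\omega/\partial I)\neq 0$ at one convenient mass configuration, relying on analyticity to pass to almost all masses. You never actually perform this computation, and the paper explicitly flags it as the obstruction to this route: the authors remark that calculating the Birkhoff invariants near this periodic orbit is hard to achieve even with the help of a computer. The paper circumvents the torsion entirely by using a parametrised (iso-chronic) KAM theorem in the style of Herman--F\'ejoz (Thm \ref{KAM} with Corollaries \ref{cor: isochron nondeg} and \ref{cor: proper deg isochron nondeg}): the frequency map is regarded as a function of the \emph{external} parameters $(D_3,\bar m_1,\bar m_2,\bar m_3,e_2)$ rather than of the actions, so that only the first-order dependence of the five tangential-normal frequencies on these parameters needs to be nondegenerate. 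That Jacobian is computable (Lemma \ref{Lem: Normal Nondegeneracy}, evaluated by Maple at the degenerate limit $m_1=1$, $m_2=m_3=0$ and propagated by analyticity), and this choice of parameters is also exactly what produces the ``for almost all masses'' formulation, via Fubini, rather than an argument about the vanishing locus of a torsion determinant. The missing idea in your proposal is precisely this replacement of twist-nondegeneracy in the actions by frequency-nondegeneracy in the masses and an eccentricity.

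Two further points. First, you miscount the normal modes: in the translation- and rotation-reduced planar four-body problem the periodic orbit has \emph{four} elliptic normal frequencies, not three. The resonant combinations $\delta_1=l_1-2l_2$ and $\delta_2=l_2-2l_3$ are genuine librating degrees of freedom (they contribute the two frequencies of order $\sqrt{\mu e}$, the pericentre variables contributing the two of order $\mu e^{-1}$), so the Lagrangian tori are five-dimensional; these degrees of freedom are not ``fixed by the resonance constraint''. Second, because the frequencies live at three different scales ($1$, $\sqrt{\mu e}$, $\mu e^{-1}$), the nondegeneracy must be checked scale by scale and the perturbation must be pushed to order $O(\mu e^{N}+\mu^{N})$ by repeated elimination of the fast angle before the KAM smallness threshold (which itself degenerates with $\bar\gamma$ and the scale parameters) is met; the paper handles this with Corollary \ref{cor: proper deg isochron nondeg}. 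Your appeal to a generic R\"ussmann-type theorem does not by itself address either of these quantitative issues.
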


In the analysis above, we have completely ignored Callisto, which is more distant from the Jupiter compared to the inner three satellites, but is nevertheless important to be included in the theory, since it might cause important secular inequalities of the inner three satellites. To have a more complete theory of the Galilean satellites including Callisto, we proceed by adding an outer fourth satellite to the system.  Near circular motions of Callisto, De Sitter's theory can be directly extended to such a model, in which we establish the existence of several families of quasi-periodic orbits, with some of them correspond to De Sitter's periodic orbits, some of them correspond to librating quasi-periodic solutions established above. 

\begin{theo} In both $1+4$-body problem or $5$-body problem symplectically reduced by the translations and rotations, for almost all masses among which one sufficiently dominates the others, there exist
\begin{itemize}
\item a set of quasi-periodic orbits lying on a family of normally elliptic invariant 2-tori, along which the three inner mean motions satisfy the $4:2:1$ resonance, and the outermost orbit is almost circular with frequency incommensurable with the frequencies of the inner three; and
\item a set of positive measure of quasi-periodic orbits, in which the three inner mean motions are close to $4:2:1$ resonance, and the outermost orbit is almost circular with frequency incommensurable with the frequencies of the inner three.
\end{itemize}
\end{theo}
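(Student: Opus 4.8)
The plan is to adjoin Callisto to the averaging--reduction--KAM scheme underlying Theorem~\ref{Theo: 1.1} and the previous theorem, exploiting that Callisto is both away from low-order mean-motion resonance with the inner three and near-circular, so that it enters only as one extra, weakly coupled, normally elliptic degree of freedom.

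First I would write the Hamiltonian of the $1+4$-body problem, resp. of the $5$-body problem after symplectic reduction by translations, in Jacobi-type coordinates with Poincar\'e variables $(\Lambda_j,\lambda_j,\xi_j,\eta_j)$, $1\le j\le 4$, so that $H=H_{\mathrm{Kep}}(\Lambda)+\varepsilon\,H_{\mathrm{pert}}$, with $\varepsilon$ the largest satellite-to-primary mass ratio and $H_{\mathrm{Kep}}$ the decoupled Keplerian part. Restricting attention to the codimension-two Keplerian torus on which $n_1:n_2:n_3=4:2:1$, I would use the hypothesis that along the relevant family Callisto's mean motion $n_4$ is not in low-order resonance with the common inner frequency $n$, and average $H$ in two stages: first over $\lambda_4$, which at first order in $\varepsilon$ replaces Callisto's pull on the inner bodies by a smooth $O(\varepsilon)$ secular potential and endows $(\xi_4,\eta_4)$ with the classical secular Hamiltonian driven by the circularized inner bodies -- near the circular orbit an elliptic oscillator with nonzero frequency and, after Birkhoff normalization, nonzero torsion; and then over the fast combination of $\lambda_1,\lambda_2,\lambda_3$, retaining the semi-fast resonant angles $\sigma_1=\lambda_1-2\lambda_2$ and $\sigma_2=\lambda_2-2\lambda_3$. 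After reduction by the diagonal $SO(2)$-symmetry the normal form is De Sitter's first-order system of Theorem~\ref{Theo: 1.1} for the inner three, decoupled from the $(\xi_4,\eta_4)$-oscillator up to a regular $O(\varepsilon)$ coupling.

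By Theorem~\ref{Theo: 1.1} the $SO(2)$-reduced inner-three normal form carries a one-parameter family of linearly stable periodic orbits; crossed with Callisto's circular motion this gives, in the decoupled limit, a family of reducible normally elliptic invariant $2$-tori with frequency $(n,n_4)$. For $n_4/n$ Diophantine -- hence outside a measure-zero set of Callisto semi-major axes, and in particular for almost all masses -- and under the first Melnikov condition (nonresonance of the elliptic normal frequencies with $(n,n_4)$) and a twist condition, a KAM theorem for lower-dimensional normally elliptic tori (in the form due to Eliasson, P\"oschel, Jorba--Villanueva) yields persistence of a positive-measure Cantor family of such $2$-tori for the coupled averaged Hamiltonian, hence genuine quasi-periodic solutions of the full problem with the stated properties; this is the first item. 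For the second item I would linearize around this family of $2$-tori: the normal elliptic directions are De Sitter's secular libration modes (as used in the previous theorem) together with Callisto's secular eccentricity mode, and switching on their amplitudes and detuning $(\Lambda_1,\Lambda_2,\Lambda_3)$ away from exact $4:2:1$ resonance casts the problem, after a Birkhoff normal form in the normal variables, in the setting of a (degenerate) KAM theorem. Its conclusion is a set of positive measure of Diophantine invariant tori along which the inner mean motions are close to $4:2:1$, Callisto remains almost circular, and the near-resonant inner, secular-libration and Callisto frequencies are rationally independent. Using the degenerate-KAM formalism of Herman--F\'ejoz (Arnold's theorem under R\"ussmann non-degeneracy) one avoids having to know the frequency maps explicitly.

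The main obstacle is the non-degeneracy required by these KAM steps: one must show that the frequency map -- assembled from the Keplerian frequencies of $(\Lambda_1,\Lambda_2,\Lambda_3)$, De Sitter's secular frequencies and their Birkhoff corrections, and Callisto's secular frequency and torsion -- is R\"ussmann-non-degenerate as a function of the actions, and that it can fail to be so only on a set of masses of measure zero. As in the planetary $N$-body problem, this obstruction is an analytic function of the masses that one expects to prove not identically zero, e.g. by computing its limit as the eccentricities and the inner mass ratios tend to zero, where De Sitter's system and the Laplace--Lagrange secular systems become explicitly computable; the delicate points are that the relevant quantities are averaged and only implicitly defined and that secular resonances with the Keplerian frequencies must be excluded along the way. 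Establishing this non-degeneracy, together with the non-vanishing of Callisto's secular torsion and the transversality of De Sitter's family already used for Theorem~\ref{Theo: 1.1}, completes the argument.
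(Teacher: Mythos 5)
Your skeleton matches the paper's: average over Callisto's mean longitude and the inner fast angle while retaining the two resonant combinations, observe that Callisto's secular dynamics near circularity is an elliptic oscillator (the paper checks this via the sign of the Laplace-coefficient quadratic form and Herman's lemma killing the indirect part), cross De Sitter's linearly stable family with Callisto's circular equilibrium to get normally elliptic invariant $2$-tori, and then run KAM twice --- once for the $2$-tori, once for nearby Lagrangian tori. Two points differ genuinely. First, the elimination: because the normal frequencies to be beaten are of orders $\sqrt{\mu e}$, $\mu e^{-1}$ and $\mu$, the normal form must be pushed to order $O(\mu e^{N}+\mu^{N})$, and the joint elimination of $(\delta_{3},\lambda_{4})$ then forces a restriction to a transversal Cantor set where $(\nu_{Kep,3},\nu_{Kep,4})$ is Diophantine and yields only a Whitney-smooth conjugacy; your two-stage first-order averaging glosses over this, though your Diophantine hypothesis on $n_{4}/n$ points in the right direction. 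Second, and more importantly, the non-degeneracy mechanism: you propose first Melnikov conditions plus a normally-elliptic-tori theorem for the first item, and R\"ussmann non-degeneracy of the frequency map \emph{in the actions} (via Birkhoff normal forms and torsions) for the second. The paper instead applies F\'ejoz's hypothetical-conjugacy theorem with \emph{external parameters} --- the rescaled masses $\bar{m}_{1},\bar{m}_{2},\bar{m}_{3},\bar{m}_{4}$, the eccentricity $e_{2}$ and the action $\Lambda_{4}$ --- together with a ``proper-degenerate iso-chronic'' corollary that exploits the separation of frequency scales to verify non-degeneracy block by block, reducing everything to explicit $2\times 2$ Jacobians of the coefficients of De Sitter's two quadratic equations with respect to $(\bar{m}_{1},e_{2})$ and $(\bar{m}_{2},\bar{m}_{3})$, evaluated at degenerate mass values. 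This is precisely what makes the ``for almost all masses'' statement come out cheaply and why no Birkhoff invariant is ever computed --- the paper explicitly remarks that computing Birkhoff invariants is intractable even in the four-body model. The step you yourself flag as ``the main obstacle'' (proving R\"ussmann non-degeneracy of the action-frequency map by limiting arguments) is therefore the crux of your route and is left unestablished; it is plausible in the Herman--F\'ejoz tradition but substantially harder than the parameter-Jacobian computation the paper actually performs, so you should either carry it out or switch to the parameter-based formulation.
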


We organize this article as follows: In Section \ref{Section: 1+3 and 4 body problem} we formulate our models and present them in canonical Joviancentric coordinates. A procedure of calculating the critical part of the perturbing function and the associated averaging procedure are presented in Section \ref{Section: Calculation perturbing function}. Section \ref{Section: Periodic Orbits of De Sitter} presents De Sitter's analysis on his family of linearly stable periodic orbits. We then present a version of KAM theorem in Section \ref{Section: KAM theorem}, which will be applied in Section \ref{Sec: KAM near periodic orbit} to establish librating quasi-periodic orbits around these periodic orbits, and in Section \ref{Section: A complete system of Galilean satellites} to extend these results to the 1+4 / 5-body problems by establishing the existence of families of quasi-periodic orbits.

We aim to present the main ideas with many more explanations in another expository work \cite{BZ2}, which we hope could serve a broader community.

% In Section \ref{Section: Poincaré's continuation method}, we recall Poincaré's theorem on continuation of periodic orbits.

\section{Formulations: The $1+3$-Body Problem and the $4$-Body Problem}\label{Section: 1+3 and 4 body problem}
\subsection{The $1+3$-Body Problem}
To have a simplest model of our problem, we shall start by considering the planar $1+3$-body problem with a fixed center with mass $m_{0}$ and three small bodies with masses $m_{1}, m_{2}, m_{3} << m_{0}$. In view of the fact that the mutual interactions of the bodies are smaller than the attracting forces from the center body, we may decompose the Hamiltonian $F$ of the system as
$$F=F_{Kep}+F_{pert},$$
with 
$$F_{Kep}=\dfrac{\|p_{1}\|^{2}}{m_{1}}+\dfrac{\|p_{2}\|^{2}}{m_{2}}+\dfrac{\|p_{3}\|^{2}}{m_{3}}-\dfrac{m_{0} m_{1}}{r_{01}}-\dfrac{m_{0} m_{2}}{r_{02}}-\dfrac{m_{0} m_{3}}{r_{03}}$$
$$F_{pert}=-\dfrac{m_{1} m_{2}}{r_{12}}-\dfrac{m_{2} m_{3}}{r_{23}}-\dfrac{m_{3} m_{1}}{r_{31}}$$
in which\footnote{Readers comparing this article with \cite{DeSitter1909} should be careful that De Sitter took $R=-F_{pert}$.} we have denoted
the positions and momenta as 
$$\qquad (q_{1}, q_{2}, q_{3}, p_{1}, p_{2}, p_{3}) \in T^{*} (\{(q_{1}, q_{2}, q_{3}) \in \R^{2} \times \R^{2} \times \R^{2} : q_{1} \neq q_{2}, q_{2} \neq q_{3}, q_{3} \neq q_{1}\})\,$$
 $$\hbox{ and the mutual distances } r_{ij}=\|q_{i}-q_{j}\|, \, 0 \le i < j \le 3, \hbox{ in which we have set }q_{0} \equiv 0.$$

\subsection{The $4$-Body Problem in Canonical Joviancentric Coordinates}
In parallel, we shall also consider the planar full 4-body problem with masses $m_{1}, m_{2}, m_{3} << m_{0}$, positions $q_{0}, q_{1}, q_{2}, q_{3}$ and momenta $p_{0}, p_{1}, p_{2}, p_{3}$ and write, following \cite{LaskarRobutel},  the Hamiltonian in the canonical Joviancentric coordinates 
$$(\tilde{p}_{0}, \tilde{p}_{1}, \tilde{p}_{2}, \tilde{p}_{3}, \tilde{q}_{0}, \tilde{q}_{1}, \tilde{q}_{2}, \tilde{q}_{3}),$$ defined by
\begin{equation*} 
\left\{
\begin{array}{ll}\tilde{q}_{0}=q_{0}   \\ \tilde{q}_{1}=q_{1}-q_{0} \\ \tilde{q}_{2}=q_{2}-q_{0} \\ \tilde{q}_{3}=q_{3}-q_{0}.
\end{array}\right.
\phantom{aaaaaaa}
\left\{
\begin{array}{ll}\tilde{p}_{0}=p_{0}+p_{1}+p_{2}+p_{3}   \\ \tilde{p}_{1}=p_{1}\\ \tilde{p}_{2}=p_{2} \\ \tilde{p}_{3}=p_{3}.
\end{array}\right.
\end{equation*}

The kinetic part thus becomes
$$T=T_{0}+T_{1}$$
with 
$$T_{0}=\frac{1}{2} \sum_{i=1}^{n} \|\tilde{p}_{i}\|^{2} (\frac{1}{m_{i}}+\frac{1}{m_{0}}),\quad T_{1}=\sum_{0<i<j} \frac{\tilde{p}_{i} \cdot \tilde{p}_{j}}{m_{0}}; $$
The potential part simply takes the form
$$U=U_{0}+U_{1}$$
with
$$U_{0}=-\sum_{i=1}^{n} \frac{m_{0} m_{i}}{r_{0 i}}, \quad U_{1}=-\sum_{0<i<j} \frac{m_{i} m_{j}}{r_{i j}}.$$
The complete Hamiltonian can thus be decomposed as the Keplerian part $F_{Kep}=T_{0}+U_{0}$ and the perturbing part $F_{pert}=T_{1}+U_{1}.$ Compare with the $1+3$-body formulation, the fact that Jupiter is not fixed causes the modification of the mass parameters in the Keplerian part, and the appearance of the indirect part $T_{1}$, which, though well-known to have no secular influence, does affect the study of lower order resonances (the reader is invited to consult the end of Section \ref{Section: Calculation perturbing function} for this point). 

In this article, we shall treat both models altogether while commenting on their differences.

\subsection{Introduction of a Small Parameter: Order of the Mass Ratio}
Let $\mu$ be a small parameter representing the order of the mass ratio between $m_{i}, i=1,2,3$ and $m_{0}$. The momenta $\tilde{p}_{i}$ is of order $\mu$ if the velocities are considered as of order $1$. We thus set 
$$m_{1}=\mu \bar{m}_{1}, m_{2}=\mu \bar{m}_{2}, m_{3}=\mu \bar{m}_{3}, \tilde{p}_{1}=\mu \bar{p}_{1}, \tilde{p}_{2}=\mu \bar{p}_{2}, \tilde{p}_{3}=\mu \bar{p}_{3}.$$
Therefore, we have $F_{Kep} \sim \mu$ and $F_{pert}=O(\mu^{2})$.

We rescale the symplectic form by taking $(\bar{p}_{1},\bar{p}_{2}, \bar{p}_{3}, \tilde{q}_{1}, \tilde{q}_{2}, \tilde{q}_{3})$ instead of $(\tilde{p}_{1},\tilde{p}_{2}, \tilde{p}_{3}, \tilde{q}_{1}, \tilde{q}_{2}, \tilde{q}_{3})$ as a set of Darboux coordinates. To preserve the same Hamiltonian vector field, we have to rescale the Hamiltonian function by a factor of $\mu^{-1}$. After the rescaling, we have (by abuse of notations) $F_{Kep} \sim 1$ and $F_{pert}=O(\mu)$.

\subsection{Poincaré's Continuation Method} \label{Subsection: Poincaré's continuation method}
When $\mu=0$, the system $F_{Kep}$ determines three uncoupled Keplerian motions. This is a \emph{properly degenerate} system. This proper degeneracy arise from the fact that in the Kepler problem all bounded orbits are closed, and is, very often, both a bless (the unperturbed dynamics is simple) and a difficulty (the perturbed dynamics of a degenerate systems cannot be well-controlled and higher order effects have to be considered to, hopefully, remove the degeneracy).

In \cite{Poincare1892}, Poincaré considered the problem of continuation of periodic orbits from proper-degenerate systems.  

Let $(I, \theta)=(I_{1}, \theta_{1}, I_{2}, \theta_{2})$ be a set of symplectic coordinates with $(I_{1}, \theta_{1}) \in T^{*} \T^{n_{1}}$ and $(I_{2}, \theta_{2}) \in T^{*} \T^{n_{2}}$,
$$F(I, \theta)=F_{0}(I_{1})+\mu \cdot F_{1} ({I_{1}=I_{1}^{0}}, \theta'_{1},I_{2}, \theta_{2})+o(\mu)$$
be a Hamiltonian function depending on a small parameter $\mu$. For small $\mu$, the dependence of $\mu F_{1}$ on $I_{1}$ is much weaker than the dependence of $F_{0}$ on these variables, therefore provided some non-degeneracy on $F_{0}$ is satisfied, we could fix $I_{1}$ in the expression of $F_{1}$. Along a periodic solution $\xi$ of $F_{0}$, the frequencies $n_{1}=\dfrac{d F_{0}}{d I_{1}}$ are in complete resonance; we have denoted resonant angles (or \emph{critical arguments}) by $\theta'_{1}$. 

\begin{theo}(Poincaré,\cite[n.46, p. 133]{Poincare1892})\label{Theo: Poincare continuation} The periodic solution $\xi$ of $F_{0}$ can be continued to a periodic solution of $F$ for $0<\mu<<1$ provided that
\begin{itemize}
\item The Hessian of $F_{0}$ with respect to $I_{1}$ is non-degenerate: $\det\dfrac{d^{2} F_{0}}{d I_{1}^{2}} \neq 0$;
\item $\xi$ corresponds to a non-degenerate critical point of $F_{1}$.
\end{itemize}
\end{theo}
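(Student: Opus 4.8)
The plan is to continue $\xi$ by combining a first-order resonant normal form with an application of the implicit function theorem to the Poincaré return map, the bifurcation equation being exactly the critical-point equation of $F_1$. First I would set up resonant action-angle coordinates. Since the frequency vector $n_1 = dF_0/dI_1(I_1^0)$ is in complete resonance it spans a line $\mathbb{R}\,\mathbf{k}$ with $\mathbf{k}\in\mathbb{Z}^{n_1}$; choosing a unimodular matrix whose first row is a primitive version of $\mathbf{k}$ gives a linear symplectic change of the angles that isolates a single \emph{fast} angle $\phi$, with $\dot\phi = \omega_0 \neq 0$ under $F_0$, from the $n_1-1$ \emph{slow} (critical) angles $\theta_1'$; let $(J,J')$ be the conjugate actions. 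In these coordinates $\xi$ is literally a circle $\{\phi\in\mathbb{T}\}$ at fixed values of all remaining variables, and it lies inside a whole manifold of $F_0$-periodic orbits parametrized by $(\theta_1',I_2,\theta_2)$ and the actions. The first hypothesis $\det d^2F_0/dI_1^2\neq 0$ says the frequency map $I_1\mapsto dF_0/dI_1$ is a local diffeomorphism, which both justifies freezing $I_1=I_1^0$ inside $F_1$ and will let us adjust the actions so that the continued orbit closes up.

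Next I would exhibit the candidate orbit as an equilibrium of the reduced slow system. Expanding $F_0$ to second order in the slow actions about the resonant value and keeping $\mu F_1$, the slow Hamiltonian takes the pendulum-type form $\tfrac12\langle A\,\delta J',\delta J'\rangle + \mu F_1(\theta_1',I_2,\theta_2)$, where $A$ is the relevant block obtained from $d^2F_0/dI_1^2$ under the resonant change of variables. Its equilibria are exactly $\delta J'=0$ together with $\nabla_{(\theta_1',I_2,\theta_2)}F_1=0$, that is, the critical points of $F_1$; the linearization there is block-built from $A$ and from the Hessian of $F_1$, hence nonsingular precisely when \emph{both} hypotheses hold. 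This is where the two non-degeneracy conditions enter jointly: the first governs the action (closure and period) directions, while the second governs the critical-angle and $(I_2,\theta_2)$ directions.

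The main step is the continuation itself. I would fix an energy surface $F=h$ and, using $\dot\phi\neq 0$, reduce to the isoenergetic Poincaré return map $P_\mu$ on a section $\{\phi=\phi_0\}$, so that fixed points of $P_\mu$ are exactly the sought periodic orbits. At $\mu=0$ the restriction of $P_0$ to the resonant fiber is the identity (a fully degenerate family), so I would study the rescaled displacement $G(z,\mu):=(P_\mu(z)-z)/\mu$, which extends continuously to $\mu=0$ with $G(z,0)$ equal to the period-averaged Hamiltonian vector field of $F_1$ together with the action-correction $A\,\eta$ coming from the twist (writing $\delta J'=\mu\eta$). By the computation above $G(\cdot,0)$ vanishes at the point corresponding to the given critical point of $F_1$, and its derivative there is invertible by the two hypotheses; the implicit function theorem then produces a smooth branch $z(\mu)$ of fixed points for $0<\mu\ll1$, i.e. a continuation of $\xi$ to a periodic solution of $F$.

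The delicate part will be the bookkeeping that makes the implicit function theorem applicable. One must check that $P_\mu$, and hence $G$, is $C^1$ up to and including $\mu=0$, which requires the $o(\mu)$ remainder to be uniformly small together with its first derivatives on a neighborhood of $\xi$ and over one return time. One must also handle the period and energy reduction so that the displacement map is genuinely a self-map of a space of the correct dimension; this is precisely where $\det d^2F_0/dI_1^2\neq0$ is used, to adjust the actions and the return time so that the orbit closes. Finally, in the intended application the hypothesis that $F$ already has the form $F_0+\mu F_1(\theta_1',I_2,\theta_2)+o(\mu)$, with $F_1$ free of the fast angle, is obtained by a preliminary averaging (normal form) transformation that pushes the fast-angle-dependent $O(\mu)$ terms into the remainder; this transformation is near-identity and does not affect the existence conclusion.
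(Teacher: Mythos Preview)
Your proposal is correct and follows the same route the paper indicates: the paper gives no detailed proof but simply remarks that the non-degenerate version ``can eventually be phrased by the non-degeneracy of the associated Poincar\'e map'' and that ``the proof follows from the implicit function theorem.'' You have fleshed out exactly this argument---resonant angle splitting, rescaled displacement $G(z,\mu)=(P_\mu(z)-z)/\mu$, and the implicit function theorem applied at $\mu=0$---which is the standard execution of Poincar\'e's idea and is in complete agreement with the paper's one-line sketch.
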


In the original statement of Poincaré, the second condition is weaker: It is enough to impose  that $\xi$ corresponds to a critical point of odd multiplicity of $F_{1}$. This ``non-degenerate'' version can eventually be phrased by the non-degeneracy of the associated Poincar\'e map. The proof follows from the implicit function theorem.

\subsection{First Order Approximating System and the Introduction of another Small Parameter: Order of the Eccentricities}
Let us consider an invariant resonant 6-tori of $F_{Kep}$ characterized by the $4:2:1$- resonant condition on the mean motions.

To apply Thm \ref{Theo: Poincare continuation}, we average $F_{pert}$ over this resonance in an $\hbox{Cst } \varepsilon$-neighborhood\footnote{We shall eventually only consider persistence of invariant objects under $O(\varepsilon)$-perturbations. The restriction is made to adapt to this.} $\tilde{N}$ (with a large enough constant Cst) of these resonant tori, thus results in a certain function $F_{res}$, containing only the \emph{critical terms}, or resonant terms corresponds to this resonance, and truncate at the first order of the eccentricities.  The function $F$ is thus conjugated to  
$$F_{Kep}+F_{res}+F_{rem},$$
in which $F_{Kep}$ is of order $1$, $F_{res}$ is of order $\mu e$, while the remainder $F_{rem}$ is of the order $O(\mu e^{2})+O(\mu^{2})$, in which $e$ is a small parameter characterizing the order of smallness of $e_{1}, e_{2}, e_{3}$. This averaging procedure will be made precise in Subsection \ref{Subsection: Elimination procedure}.

\section{Calculation of the Critical Part of the Perturbing Function}\label{Section: Calculation perturbing function}
Let $l_{1}, l_{2}, l_{3}$ and $g_{1}, g_{2}, g_{3}$ denote respectively the mean anomalies and the arguments of the pericentres of the particles $q_{1}, q_{2}, q_{3}$. We restricted ourselves to a deleted neighborhood of the circular motions to have these elements always well defined. These elements, together with the semi major axes $a_{1}, a_{2}, a_{3}$ and $e_{1}, e_{2}, e_{3}$ form a set of regular coordinates in such a deleted neighborhood in the phase space.  

To obtain the desired periodic solutions, we start by calculating the averaged perturbing function over the above-mentioned resonance, truncated at the first order of the eccentricities $e_{1}, e_{2}, e_{3}$ that we suppose to be small.

It is known (\cite[p.305]{Tisserand1889}) that the function $F_{pert}$ can be developed in Fourier series containing only cosines of the angles 
$$k_{1} l_{i}+k_{2} l_{j}+k_{3} g_{i}+ k_{4} g_{j}, (i,j)=(1,2), (2,3), (3,1), k_{1}, k_{2}, k_{3}, k_{4} \in \Z$$
with coefficients depending only on the semi major axes and the eccentricities. Among such terms we shall only be interested in those which are of first order in the eccentricities, and moreover, only those contain multiples of the critical arguments $\delta_{1}:=l_{1}-2 l_{2}$ and $\delta_{2}:=l_{2}-2 l_{3}$, since only these terms persist after being averaged over the resonance.

As indicated in \cite[p.307]{Tisserand1889}, to have terms at most of first order in the eccentricities, the corresponding $(k_{1}, k_{2}, k_{3}, k_{4})$ must satisfy
\begin{equation}\label{eq:1}
|k_{1}-k_{3}|+|k_{2}-k_{4}| \le 1.
\end{equation}
Moreover, due to the invariance of the potential function under the rotations, we must have$$k_{3}+k_{4}=0.$$
We see immediately that Eq. \ref{eq:1} cannot be satisfied for $k_{2}=4 k_{1} \neq 0$, \emph{i.e.} the mutual interaction of the innermost and the outermost bodies $q_{1}, q_{3}$ could appear only at the second order of the eccentricities and are completely negligible for our present calculation of $F_{res}$. For $k_{2}=2 k_{1}$, we can only have 
$$(k_{1}, k_{2}, k_{3}, k_{4})=(1,-2,1,-1)\hbox{ or }(1,-2,2,-2).$$
It is thus sufficient to calculate the coefficients of each of the two corresponding terms (which are actually of first order in the eccentricities) from the series expansions of $-\dfrac{m_{1} m_{2}}{r_{12}}$ and $-\dfrac{m_{2} m_{3}}{r_{23}}$ respectively. 

We now calculate, in the series expansions of $-\dfrac{m_{1} m_{2}}{r_{12}}$, the coefficients of the terms $\cos(l_{1}-2 l_{2}+g_{1}-g_{2})$ and $\cos(l_{1}-2 l_{2}+2 g_{1}-2 g_{2})$ while those in $-\dfrac{m_{1} m_{2}}{r_{12}}$ are completely analogous. Let $\alpha=\dfrac{a_{1}}{a_{2}}$ be the semi major axes ratio of the inner two bodies, $\rho=\dfrac{r_{1}}{r_{2}}$ their ratio of radii, and $v_{1}, v_{2}$ be the true anomalies of the inner two bodies. Following \cite{LaskarRobutel}, we let
\begin{equation} \label{eq: expansion 1}
\dfrac{1}{r_{12}}=\dfrac{1}{r_{2}} \cdot \dfrac{1}{\sqrt{A+V}},
\end{equation}
in which 
\begin{align*}&A=1+\alpha^{2}-2 \alpha \cos(l_{1}-l_{2}+g_{1}-g_{2}), \\ &V=2 \alpha \left(\cos(l_{1}-l_{2}+g_{1}-g_{2})-\dfrac{\rho}{\alpha} \cos \angle (q_{1}, q_{2})\right)+\rho^{2}-\alpha^{2}.
\end{align*} 

The function $V$ is at least of first order in the eccentricities $e_{1}, e_{2}$, hence is much smaller than $A$ in the deleted neighborhood we are now considering. We expand Eq. \ref{eq: expansion 1} into
\begin{equation} \label{eq: expansion 2}
\dfrac{1}{r_{12}}=\dfrac{1}{r_{2}} \cdot \dfrac{1}{\sqrt{A}}-\dfrac{1}{r_{2}}\dfrac{V}{\sqrt{A^{3}}}+O(V^{2}),
\end{equation}
The expansions of $A^{-s/2}, s=1,3$ in terms of $\cos k (l_{1}-l_{2}+g_{1}-g_{2}), k =0, 1, \cdots$ is obtained by the usual Laplace coefficients $b_{s}^{(k)}(\alpha), s=\frac{1}{2}, \frac{3}{2}$, defined as the coefficients of the Laurent series (c.f. \cite{LaskarRobutel})
$$A^{-s}=(1-\alpha z)^{-s} (1-\alpha z^{-1})^{-s}=\dfrac{1}{2} \sum^{+ \infty}_{k=- \infty} b_{s}^{(k)}(\alpha) \, z^{k}.$$

A calculation shows that in first order of the eccentricities, $r_{2}$ only contains terms with argument $l_{2}$, and that in first order of the eccentricities, $V$ only contains terms with arguments 
$$l_{1}, l_{2}-g_{1}+g_{2}, 2 l_{1} - l_{2} +g_{1}-g_{2}, l_{2}, l_{1}+g_{1}-g_{2}, l_{1}-2 l_{2}+g_{1}-g_{2}.$$
We observe that to calculate these terms, it is enough to only calculate, in the expansions of $A^{-1/2}$ and $A^{-3/2}$, those terms with arguments $0$, $l_{1}-l_{2}+g_{1}-g_{2}$, $2 l_{1} - 2 l_{2} + 2 g_{1} -2 g_{2}$ and $3 l_{1} - 3 l_{2} + 3 g_{1} -3 g_{2}$ (c.f. \cite{LaskarRobutel}).

The effective calculation is assisted by Maple 16. We find that 
\begin{align*}
F_{res}=&\dfrac{m_{1} m_{2}}{a_{2}}\left\{\bar{A} \, e_{1} \cos (l_{1}-2l_{2}+2 g_{1}-2 g_{2})-\bar{B} \, e_{2} \cos (l_{1}-2l_{2}+g_{1}-g_{2})\right\}\\+&\dfrac{m_{2} m_{3}}{a_{3}}\left\{\bar{A} \, e_{2} \cos (l_{2}-2l_{3}+2 g_{2}-2 g_{3})-\bar{B} \, e_{3} \cos (l_{2}-2l_{3}+g_{2}-g_{3})\right\} + \bar{C},
\end{align*}
in which $\bar{A}, \bar{B}$ are two functions of the semi major axes ratio $\alpha=\dfrac{a_{1}}{a_{2}}=\dfrac{a_{2}}{a_{3}}$,
\begin{align*}
&\bar{A}=\dfrac{3}{4}\alpha\, b_{3/2}^{1}(\alpha)-\dfrac{1}{2}\alpha\, b_{3/2}^{3}(\alpha)-\alpha^{2}\, b_{3/2}^{2}(\alpha)\\
&\bar{B}=\dfrac{3}{4}\alpha\, b_{1/2}^{1}(\alpha)+\dfrac{3}{2}\alpha\, b_{3/2}^{0}(\alpha)-\alpha^{2}\, b_{3/2}^{1}(\alpha)-\dfrac{1}{2}\alpha \, b_{3/2}^{2}(\alpha)\\
\end{align*}
and in which $\bar{C}$
is a certain function depending only on the masses and the semi major axes that will be ignored in the sequel. With $\alpha=\dfrac{1}{\sqrt[3]{4}}$, we find, in the $1+3$-body model, approximately $2 \bar{A}=2.3810, 2 \bar{B}=3.3764$\footnote{Note that, in \cite{DeSitter1909}, De Sitter took (for small eccentricities) $\epsilon_{i}=e_{i}/2, \, i=1,2,3$ instead of $e_{i}$ in the expression of $F_{res}$.}. The value of $\bar{A}$ is in consistence with De Sitter \cite{DeSitter1909} for the 4-body model. The value of $\bar{B}$ differs from the 4-body model (for which De Sitter had noted the numerical value $2 \bar{B}=0.964$, while we found $2 \bar{B}=0.8566$ by Maple) by a difference of $ \alpha^{-1/2}=\sqrt[3]{2}$, resulting from the indirect part of the perturbing function of the 4-body problem in the Joviancentric coordinates. 

The difference of the coefficient $\bar{B}$ in the two formalisms thus manifests the contribution of the indirect part of the perturbing function for the critical terms. To retain the simplicity of our analysis without diminishing its generality, in the sequel, we shall continue only with the assumptions $\bar{A}>0, \bar{B}>0$.

\section{Periodic Orbits of De Sitter} \label{Section: Periodic Orbits of De Sitter}
From Thm \ref{Theo: Poincare continuation}, we know that non-degenerate critical points give rise to periodic orbits of $F$ for small $\mu$. To look for such critical points, the particular form of $F_{res}$ (that it contains only cosines) motivates the proposal of Poincaré of the ansatz\footnote{In \cite{DeSitter1909}, De Sitter stated this as a necessity. This is not evident at all.} of collinearities of the three satellites when they simultaneously pass through their pericentres/apocentres (c.f. Figure \ref{Fig: collinearity}). We therefore only consider those solutions of $F_{Kep}$ passing through those points with 
$$(l_{1}, l_{2}, l_{3}, g_{1}, g_{2}, g_{3})=(0\hbox{ or }\pi,0\hbox{ or }\pi,0\hbox{ or }\pi,g_{2}\pm \pi,g_{2}, g_{2} \pm \pi),$$
thus 32 families of them. These families are not entirely different. Indeed, since $\delta_{1}=l_{1}-2 l_{2}, \delta_{2}=l_{2}-2 l_{3}$ does not vary along the resonance, leaving only the fast angle $l_{3}$ vary, the two families different only by $\pi$ in the $l_{3}$-argument give rise to the same family of the periodic orbits. We therefore obtained 16 families of periodic orbits labelled by 
$$(\delta_{1}, \delta_{2}, \eta_{1}:=g_{1}-g_{2}, \eta_{2}:=g_{2}-g_{3})=(\dfrac{\pi}{2} \pm \dfrac{\pi}{2}, \dfrac{\pi}{2} \pm \dfrac{\pi}{2}, \dfrac{\pi}{2} \pm \dfrac{\pi}{2}, \dfrac{\pi}{2} \pm \dfrac{\pi}{2}).$$
 We name these families respectively by $\mathcal{D}_{+,+,+,+}, \mathcal{D}_{+,+,+,-}\cdots$ etc., with subscript coincides with the ordered signs appearing in the $\pm$'s of the right hand side.

With this ansatz, a solution of $F_{Kep}+F_{res}$ is indeed periodic in some proper uniform-rotating frame, or in the reduced system by the $SO(2)$-rotational symmetry, if and only if the frequencies of the precessions of the pericentres are equal. When such a periodic solution is found, its normal dynamics, as well as that of its continuation for small $\mu$ can thus be determined by the corresponding evaluation of the Hessian of $F_{Kep}+F_{res}$.

\subsection{Delaunay Coordinates and Reduction of the Rotational Symmetry}
To effectively calculate the frequencies of the pericentres and to explicitly carry out the symplectic reduction of the rotation symmetry, we now introduce the canonical Delaunay coordinates 
$(L_i,l_i,G_i,g_i,H_i,h_i),i=1,2,3$, defining as the following:

\begin{equation*} 
\left\{
\begin{array}{ll}L_i=\mu_i \sqrt{M_i} \sqrt{a_i}   & \hbox{circular angular momentum}\\ l_i  &\hbox{mean anomaly}\\ G_i = L_i \sqrt{1-e_i^2} &\hbox{angular momentum} \\g_i &\hbox{argument of pericentre} 
\end{array}\right.
\end{equation*}
In which 
$\mu_{i}=\bar{m}_{i}, M_{i}=m_{0}$ in the $1+3$-body problem, and $\mu_{i}=m_{0} \bar{m}_{i}/(m_{0}+\mu \bar{m}_{i}), M_{i}=m_{0}+\mu \bar{m}_{i}$ in the 4-body problem.

\begin{figure}
\centering
\includegraphics[width=3in]{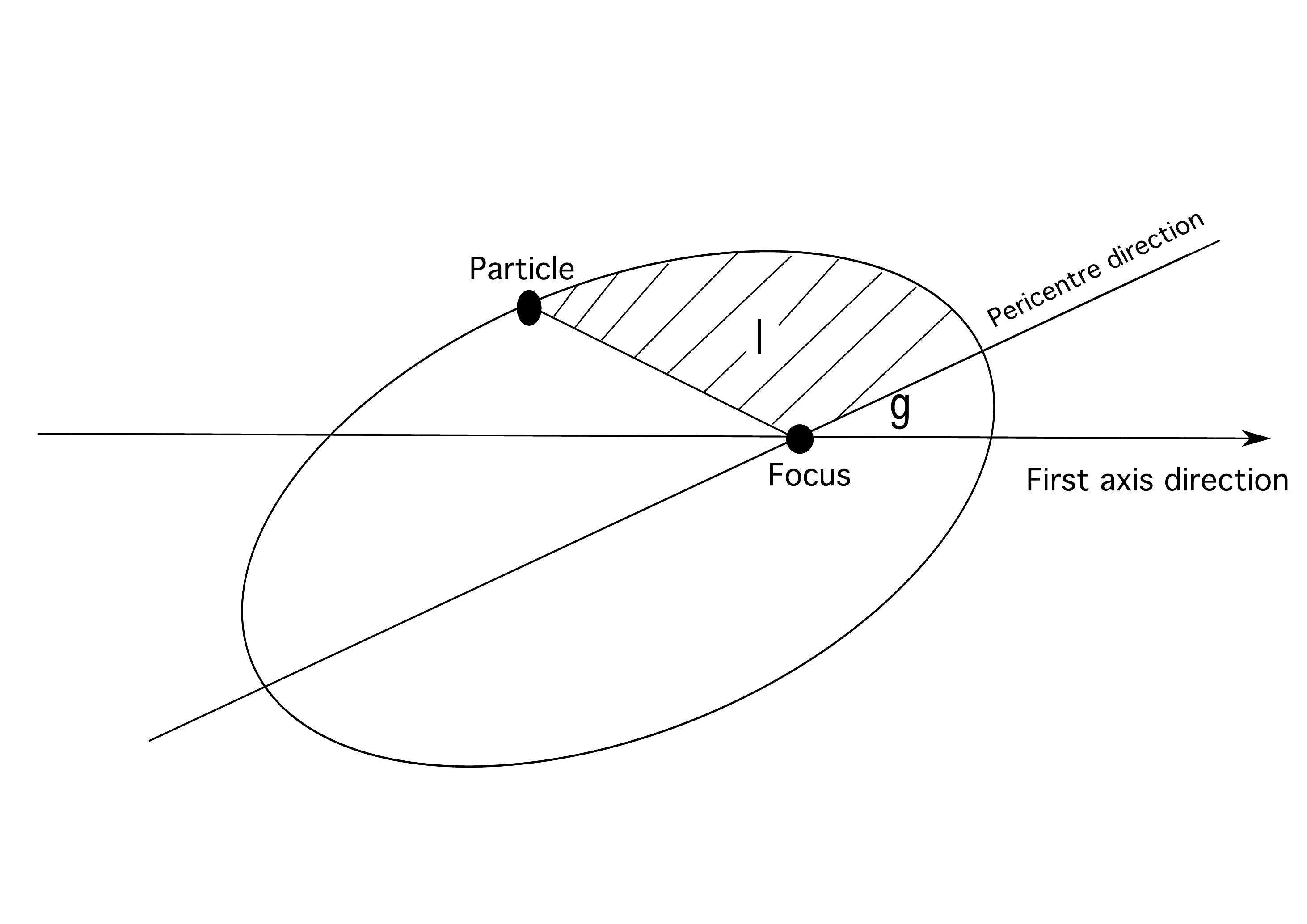}
\caption{Planar Delaunay Angles}
\end{figure}

To cope with our analysis near the $1:2:4$-resonance and to facilitate the reduction procedure, we modify these Delaunay coordinates to a set of symplectic coordinates $(D_{1}, D_{2}, D_{3}, d_{1}, d_{2}, d_{3}, Z_{1}, Z_{2}, Z_{3}, \eta_{1}, \eta_{2}, \eta_{3})$ in which 
\begin{equation*}
\left\{
\begin{array}{ll}
&D_{1}=L_{1}, \,\,\quad \quad \qquad \qquad \qquad \delta_{1}=l_{1}-2 l_{2} , \\
&D_{2}=2 L_{1}+L_{2}, \,\,\,\,\, \quad \qquad \qquad \delta_{2}=l_{2}-2 l_{3}, \\
&D_{3}=4 L_{1}+2 L_{2}+L_{3},\, \,\quad \qquad \delta_{3}=l_{3},\\
&Z_{1}=G_{1},  \,\,\, \quad \quad \quad \quad \qquad \qquad \eta_{1}=g_{1}-g_{2} , \\
&Z_{2}=G_{1}+G_{2}, \,\,\quad \quad \qquad \qquad \eta_{2}=g_{2}-g_{3}, \\
&Z_{3}=G_{1}+G_{2}+G_{3}, \, \quad \quad \qquad \eta_{3}=g_{3}.\\
\end{array}\right.
\end{equation*}
The coordinate $Z_{3}$ is just the total angular momentum; the rotational symmetry of the system translates into the independence of $F$ of $g_{3}$. The corresponding symplectic reduction procedure is thus achieved by fixing $Z_{3}$ and ignoring the variable $g_{3}$. 

\subsection{Elimination of the Fast Angle}\label{Subsection: Elimination procedure}
We suppose $0< a_{1} < a_{2} < a_{3}, 0<e_{1}, e_{2}, e_{3} < e^{\wedge}<1$ such that 
$$\dfrac{a_{1} (1+e^{\wedge})}{a_{2}(1-e^{\wedge})}<1, \dfrac{a_{2} (1+e^{\wedge})}{a_{3}(1-e^{\wedge})}<1$$
so that the three elliptic orbits are bounded away from each other for all time.

All these requirements determine an open subset $\mathcal{P}$ of the phase space.
The coordinates 
$$(D_{1}, D_{2}, D_{3}, \delta_{1}, \delta_{2}, \delta_{3}, Z_{1}, Z_{2}, Z_{3}, \eta_{1}, \eta_{2}, \eta_{3})$$ 
identifies $\mathcal{P}$ to a subset $\tilde{\mathcal{P}}$ of $\T^{6} \times \R^{6}$. The variables $\delta_{1}, \delta_{2}$ are critical (or semi-fast) arguments near the $4:2:1$-resonance, with, near this resonance, the only fast angle $\delta_{3}$. 

Let $(D_{1}^{0}, D_{2}^{0}, D_{3}^{0}) \in \R^{3}_{+}$ be so chosen such that the set
$$\{D_{1}=D_{1}^{0}, D_{2}=D_{2}^{0}, D_{3}=D_{3}^{0}\}$$
consists in $4:2:1$-resonant Keplerian motions in $\tilde{\mathcal{P}}$. For $C_{1}>0,$ We define $\tilde{N}$ to be the $C_{1} \mu$-neighborhood of the set $\{D_{1}=D_{1}^{0}, D_{2}=D_{2}^{0}, D_{3}=D_{3}^{0}\}$ in $\tilde{\mathcal{P}}$. We shall assume that $C_{1}$ is chosen large enough to allow further application of KAM theorems. We let $\check{N}$ to be the $C_{2}$-neighborhood of $\{D_{1}=D_{1}^{0}, D_{2}=D_{2}^{0}, D_{3}=D_{3}^{0}\}$ in $\tilde{\mathcal{P}}$ for some small enough $C_{2}$ so as to fixed a neighborhood for complex continuation of analytic functions. The small parameter $\mu$ is supposed to satisfy $C_{1} \mu < C_{2}$.
 
Let $T_{\C}=\C^6/\Z^6 \times \C^6$ and $T_s:=\{z \in T_{\C}: \exists \, z' \in \T^6 \times \R^6 \hbox{ s.t. } |z-z'| \le s\}$ be the $s$-neighborhood of $\T^6 \times \R^6:=\R^{6}/\Z^{6} \times \R^{6}$ in $T_{\C}$. Let $T_{\textbf{A},s}$ be the $s$-neighborhood of a set $\textbf{A} \subset \T^6 \times \R^6$ in $T_s$. 
The complex modulus of a transformation is the maximum of the complex moduli of its components. We use $| \cdot |$ to denote the modulus of either a function or a transformation. 

Being analytic functions on $\check{N}$, there exist $s>0$, such that $F_{Kep}$ and $F_{pert}$ extend to analytic functions on $T_{\check{N}, s}$.

For a function $f: T^{6} \times \R^{6} \to \R$, we define
$$\langle f \rangle_{\delta_{3}}=\dfrac{1}{2 \pi} \int_{0}^{2 \pi} f d \delta_{3}. $$

\begin{prop}\label{prop: elimination} There exists an $O(\mu)$-analytic symplectic transformation $\phi: \tilde{N} \to \phi(\tilde{N})$ such that
$$\phi^{*} F = F_{Kep}+F_{res}+F_{rem},$$
in which in $\tilde{N}$ the analytic functions
\begin{itemize}
\item$F_{res}=\langle F_{res} \rangle_{\delta_{3}}$ is the truncation at the first order of the eccentricities of the function $\langle F_{pert} \rangle_{\delta_{3}}$, and 
\item $F_{rem}=O(\mu e^{2})+O(\mu^{2})$.
\end{itemize}
\end{prop}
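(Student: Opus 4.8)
The plan is to realize $\phi$ as a single step of averaging over the unique fast angle $\delta_3$: I would look for $\phi$ as the time-one flow of a Hamiltonian vector field $X_\chi$ with auxiliary Hamiltonian $\chi=O(\mu)$ solving a homological equation that kills the $\delta_3$-dependent part of $F_{pert}$ to leading order, and then split the averaged perturbation into its first-order-in-$e$ truncation $F_{res}$ plus a remainder of size $O(\mu e^2)+O(\mu^2)$.

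I would first record that $F_{Kep}$ depends only on $D_1,D_2,D_3$, so that for any $\chi$ one has $\{F_{Kep},\chi\}=\sum_{j=1}^{3}\omega_j\,\partial_{\delta_j}\chi$ with $\omega_j:=\partial_{D_j}F_{Kep}$. From $L_1=D_1$, $L_2=D_2-2D_1$, $L_3=D_3-2D_2$ one computes $\omega_3=n_3$, the Keplerian mean motion of the third body, and $\omega_1=n_1-2n_2$, $\omega_2=n_2-2n_3$, the latter two vanishing exactly on the resonant set $\{D=D^0\}$. Since $\tilde N$ is the $C_1\mu$-neighborhood of that set, this gives $\omega_1,\omega_2=O(\mu)$ on $\tilde N$, whereas $\omega_3$ is bounded away from $0$ there and, by analyticity and compactness, on a complex neighborhood of it. Writing $F_{pert}=\langle F_{pert}\rangle_{\delta_3}+\tilde F_{pert}$ with $\tilde F_{pert}$ of zero $\delta_3$-average, I would take $\chi$ to be the zero-$\delta_3$-average solution of $\omega_3\,\partial_{\delta_3}\chi=-\tilde F_{pert}$, i.e.\ $\chi=\sum_{k\neq 0}\tfrac{\hat f_k}{ik\,\omega_3}e^{ik\delta_3}$ where $\tilde F_{pert}=\sum_{k\neq 0}\hat f_k e^{ik\delta_3}$. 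Since $F_{pert}$ is analytic on $T_{\check N,s}$ with $|F_{pert}|=O(\mu)$, the $\hat f_k$ decay geometrically, so $\chi$ is analytic on a slightly thinner strip with $|\chi|=O(\mu)$; note that this is a single-frequency average with \emph{no small divisors}, the divisors being $k\omega_3$ with $\omega_3\neq 0$.

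Next I would check that $\phi=\mathrm{id}+O(\mu)$ — Cauchy estimates give $|X_\chi|=O(\mu)$ — is a well-defined analytic symplectomorphism $\tilde N\to\phi(\tilde N)\subset\check N$; here one uses $C_1\mu<C_2$, so an $O(\mu)$ displacement does not leave the fixed-size complex-continuation neighborhood $\check N$, and every function below is analytic on $\tilde N$. By the Lie series (with the Poisson-bracket sign matching the choice of $\chi$),
\[
\phi^*F=F_{Kep}+F_{pert}+\{F_{Kep},\chi\}+\{F_{pert},\chi\}+\tfrac12\{\{F_{Kep},\chi\},\chi\}+\cdots .
\]
By construction $F_{pert}+\{F_{Kep},\chi\}=\langle F_{pert}\rangle_{\delta_3}+\omega_1\,\partial_{\delta_1}\chi+\omega_2\,\partial_{\delta_2}\chi$, and the last two terms are $O(\mu)\cdot O(\mu)=O(\mu^2)$ on $\tilde N$; every higher bracket is $O(\mu^2)$ by Cauchy estimates, using that $\{F_{Kep},\chi\}=-\tilde F_{pert}+O(\mu^2)$ is itself $O(\mu)$. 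Hence $\phi^*F=F_{Kep}+\langle F_{pert}\rangle_{\delta_3}+O(\mu^2)$. Finally I would write $\langle F_{pert}\rangle_{\delta_3}=F_{res}+\big(\langle F_{pert}\rangle_{\delta_3}-F_{res}\big)$ with $F_{res}$ the truncation of $\langle F_{pert}\rangle_{\delta_3}$ at first order in $e_1,e_2,e_3$ — which is precisely the $\delta_3$-independent function computed in Section \ref{Section: Calculation perturbing function} — the difference being $O(\mu e^2)$ since $\langle F_{pert}\rangle_{\delta_3}=O(\mu)$ and the omitted terms are of order at least $e^2$. Collecting terms yields $\phi^*F=F_{Kep}+F_{res}+F_{rem}$ with $F_{rem}=O(\mu e^2)+O(\mu^2)$.

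The main point that needs care — and which dictates the whole set-up — is the estimate $\omega_1\,\partial_{\delta_1}\chi+\omega_2\,\partial_{\delta_2}\chi=O(\mu^2)$: it holds only because one averages over $\delta_3$ alone, never over the critical angles $\delta_1,\delta_2$, and because $\tilde N$ is an $O(\mu)$-neighborhood rather than a fixed-size neighborhood of the resonance, which forces $\omega_1,\omega_2=O(\mu)$. Attempting to average over $\delta_1$ or $\delta_2$ as well would introduce the vanishing divisors $\omega_1,\omega_2$ and destroy the scheme; this is exactly why the critical terms are retained in $F_{res}$ rather than eliminated, and why one obtains only a first-order (rather than a fully secular) normal form at this stage.
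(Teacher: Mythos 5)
Your proposal is correct and follows essentially the same route as the paper: both realize $\phi$ as the time-one flow of an auxiliary Hamiltonian of size $O(\mu)$ solving the single-frequency cohomological equation in $\delta_3$ alone (deliberately neglecting the $O(\mu)$ semi-fast frequencies of $\delta_1,\delta_2$, whose contribution is then absorbed into the $O(\mu^2)$ remainder), with Cauchy estimates on a complexified neighborhood and a final truncation of $\langle F_{pert}\rangle_{\delta_3}$ at first order in the eccentricities. Your explicit justification that $\omega_1,\omega_2=O(\mu)$ on the $C_1\mu$-neighborhood $\tilde N$, and your identification of the leftover terms as $\omega_1\,\partial_{\delta_1}\chi+\omega_2\,\partial_{\delta_2}\chi$, are in fact slightly cleaner than the corresponding lines in the paper's proof.
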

\begin{proof}  We search for an auxiliary Hamiltonian $\hat{H}$ whose time-1 map gives the transformation $\phi$. We have

$$\phi^*F=F_{Kep}+(F_{pert}+X_{\hat{H}} \cdot F_{Kep})+F^1_{comp,1},$$
in which $X_{\hat{H}}$ is seen as a derivation operator. Let $\bar{F}_{res}=\langle F_{pert} \rangle_{\delta_{3}}$ be the average of $F_{pert}$ over $\delta_{3}$, and
 $\widetilde{F}_{pert}=F_{pert}-\langle F_{pert}\rangle_{\delta_{3}}$ be the zero-average part of $F_{pert}$.

In the system $F_{Kep}$, since the frequencies $\nu_{Kep, 1}, \nu_{Kep,2}$ of $\delta_{1}$ and $\delta_{2}$ is of order $O(\mu)$ compared to the frequency $\nu_{Kep, 3}$ of $\delta_{3}$, we do not have to solve the exact cohomological equation 
$$\nu_{Kep,1} \partial_{\delta_1} \hat{H}+ \nu_{Kep,2} \partial_{\delta_2} \hat{H}+\nu_{Kep,3} \partial_{\delta_3} \hat{H}=\widetilde{F}_{pert};$$
instead, we just need $\hat{H}$ to solve the perturbed cohomological equation
$$\nu_{Kep,3} \partial_{\delta_3} \hat{H}=\widetilde{F}_{pert}.$$
We thus set
$$\hat{H}= \dfrac{1}{\nu_{Kep,3}}  \int_0^{\delta_{3}}  \widetilde{F}_{pert}\, d \delta_{3}$$
as long as $\nu_{Kep,3}  \neq 0$, which is indeed satisfied as a frequency of Keplerian elliptic motion. This amounts to proceed with a single frequency elimination for $\delta_{3}$.

We have 
 $$|\hat{H}| \le \hbox{Cst } \mu \, \hbox{   in   } T_{\check{N}, s} . $$
We obtain by Cauchy inequality that in $T_{\check{N},s-s_0}$, $|X_{\hat{H}}| \le \hbox{Cst } |\hat{H} | \le \hbox{Cst } \mu$ for some $0<s_0<s/2$. Shrinking from $T_{\check{N},s- s_0}$ to $T_{\check{N}^{**},s- s_0-s_1}$, where $\check{N}^{**}$ is an open subset of $\check{N}$, so that $\phi (T_{\check{N}^{**},s- s_0-s_1}) \subset T_{\check{N},s-s_0}$, with $s- s_0-s_1>0$. The time-1 map $\phi$ of  $X_H$ thus satisfies $|\phi-Id | \le \hbox{Cst } \mu$ in $T_{\check{N}^{**},s- s_0- s_1}$. The function $\phi^* F$ is analytic in $T_{\check{N}^{**},s- s_0-s_1}$.

The function $F$ is thus analytically conjugated to
$$\phi^* F =F_{Kep}+\bar{F}_{res} + F^1_{comp,1},$$ and $|F^1_{comp,1}|$ is of order $O(\mu^{2})$: indeed, analogously as in \cite{QuasiMotionPlanar}, the complementary part
 $$F_{comp,1}^1 = \int_0^1 (1-t) \phi_t^*(X_{\hat{H}}^2 \cdot F_{Kep})dt+ \int_0^1 \phi_t^*(X_{\hat{H}} \cdot F_{pert}) dt  - \nu_{Kep,2} \partial_{\delta_2} \hat{H}-\nu_{Kep,3} \partial_{\delta_3} \hat{H}$$
 satisfies
 $$|F_{comp,1}^1| \le \hbox{Cst } |X_{\hat{H}}| (|\tilde{F}_{pert}|+|F_{pert}|) + (\nu_{Kep,2}+\nu_{Kep,3}) | \hat{H} | \le \hbox{Cst } \mu^{2}.$$

To finish the proof, it is enough to expand $\bar{F}_{res}$ in powers of the eccentricities: $\bar{F}_{res}=F_{res}+O(\mu e^{2})$, in which $F_{res}$ only contains terms of first order in eccentricities.
\end{proof}

\begin{rem}\label{rem: elimination} With the same method, the elimination procedure can be further continued to eliminate the dependence of $\delta_{3}$ in higher order perturbations as well (c.f. \cite{QuasiMotionPlanar}). 
\end{rem}

The reduced system is thus determined by the Hamiltonian function
$$F_{Kep}(D_{1}, D_{2}, D_{3})+F_{res}(D_{1}, D_{2}, D_{3}, \delta_{1}, \delta_{2}, Z_{1}, Z_{2}, \eta_{1}, \eta_{2}; Z_{3})+o\Bigl(\mu e^{2}\Bigr)+o(\mu^{2})$$
defined on a subset of  $\T^{5} \times \R^{5}$, with $Z_{3}$ appearing as a parameter.

\subsection{Relative Frequencies of the Pericentres}
We call the frequencies 
\begin{align*}
&\nu_{1}=\dfrac{\partial F_{res}}{\partial Z_{1}}=-\dfrac{2 \sqrt[3]{2} \bar{A} \cos(\delta_{1}+2 \eta_{1}) e_{2} m_{2} +2 \bar{B} \cos(\delta_{1}+ \eta_{1}) e_{1} m_{1}- \sqrt[3]{2} \bar{A} \cos(\delta_{2}+2 \eta_{2}) e_{1} m_{3} }{\sqrt{m_{0}} e_{1} e_{2}},\\ &\nu_{2}=\dfrac{\partial F_{res}}{\partial Z_{2}}=\dfrac{2 \bar{B} \cos(\delta_{1}+ \eta_{1}) e_{3} m_{1} - 2 \sqrt[3]{2} \bar{A} \cos(\delta_{2}+ 2 \eta_{2}) e_{3} m_{3}- 2 \bar{B} \cos(\delta_{2}+ \eta_{2}) e_{2} m_{2}}{\sqrt{m_{0}} e_{2} e_{3}}
\end{align*}
of $\eta_{1}=g_{1}-g_{2}$ and $\eta_{2}=g_{2}-g_{3}$ \emph{relative frequencies} of the pericentres in $F_{Kep}+ F_{res}$. They appear as differences of the three quantities
$$\nu_{g_{1}}=-\dfrac{2 \sqrt[3]{2} \bar{A} \cos(\delta_{1}+2 \eta_{1})  m_{2}}{\sqrt{m_{0}} e_{1} }, \nu_{g_{2}}=\dfrac{2 \bar{B} \cos(\delta_{1}+ \eta_{1})  m_{1} - 2 \sqrt[3]{2} \bar{A} \cos(\delta_{2}+ 2 \eta_{2})  m_{3}}{\sqrt{m_{0}} e_{2} }, \nu_{g_{3}}=\dfrac{2 \bar{B} \cos(\delta_{2}+ \eta_{2})  m_{2}}{\sqrt{m_{0}} e_{3}},$$
which are frequencies of $g_{1}, g_{2}$ and $g_{3}$ respectively.
 
To obtain periodic solutions of $F_{Kep}+ F_{res}$ after reduction of the rotational symmetry, we have to ask $\nu_{1}=\nu_{2}=0$. A simple analysis \cite[pp. 10-12]{DeSitter1909} with the signs of $\nu_{g_{1}}, \nu_{g_{2}}, \nu_{g_{3}}$ shows that if we do not propose additional conditions on the masses, then this is only possible for families $D_{-,-,+,+}$ (Case (6) of \cite{DeSitter1909}) and $D_{+,+,+,+}$ (Case (16) of \cite{DeSitter1909}). 
Otherwise, with $\bar{Q}=\sqrt[3]{2} \bar{A}  \bar{m}_{3}-2 \bar{B}  \bar{m}_{1}$, we see that $\bar{Q}>0$, $D_{-,-,-,+}$ (Case (2) of \cite{DeSitter1909}), $D_{+,+,-,+}$ (Case (12) of \cite{DeSitter1909}), and for $\bar{Q}<0$, $D_{-,+,+,-}$ (Case (7) of \cite{DeSitter1909}), $D_{+,-,+,-}$ (Case (13) of \cite{DeSitter1909},\,see \cite[p.10]{DeSitter1909}) are also possible for the nullity of $\nu_{1}$ and $\nu_{2}$. In any of these cases, we obtain a one-parameter family of eccentricities $(e_{1}, e_{2}, e_{3})$ (parametrized by one of the eccentricities, \emph{e.g.} by $e_{2}$), such that the two relative frequencies $\nu_{1}, \nu_{2}$ are zero.

\subsection{Continuation of Periodic Orbits}\label{Subsection: Continuation}
The existence of the corresponding families of periodic solutions of $F_{Kep}+F_{res}$ after reduction of the rotational symmetry thus follows from the above-mentioned families of eccentricities corresponding to the nullity of $\nu_{1}, \nu_{2}$: The frequencies of $\delta_{1}, \delta_{2}, \eta_{1}, \eta_{2}$ are all zero, leaving only the frequency of the angle $\delta_{3}$ non-zero.

We now verify that Thm \ref{Theo: Poincare continuation} is applicable in all these cases. The Hessian of $F_{Kep}$ with respect to $L_{1}, L_{2}, L_{3}$ is a non-degenerate diagonal matrix. We therefore only need to verify that the Hessian of $F_{res}$ with respect to $\delta_{1}, \delta_{2}, Z_{1}, Z_{2}, \eta_{1}, \eta_{2}$ is non-degenerate. For this Hessian matrix, we summarize the results in the following table
%\footnote{In the determination of the signs of the determinant of the Hessian matrix of the conditional periodic orbits, use $\bar{Q}, \bar{B}$ when $\bar{Q}>0$ and  $\bar{Q}, \bar{A}$ when $\bar{Q}<0$.}
\begin{center}
\begin{tabular}{|l|c|c|}
  \hline
  Case & Sign of $\bar{Q}$ & Sign of the determinant of the Hessian matrix  \\
  \hline
  $D_{-,-,+,+}$ & irrelevant & positive \\
  $D_{+,+,+,+}$ & irrelevant & positive \\
  $D_{-,-,-,+}$ & positive &  negative \\
  $D_{+,+,-,+}$ & positive & negative \\
  $D_{-,+,+,-}$  & negative &  negative\\
  $D_{+,-,+,-}$ & negative & negative \\
  \hline
\end{tabular}
\end{center}
The additional perturbation is of the order $O(\mu e^{2})+O(\mu^{2})$. We thus obtain
\begin{prop}
The normalized $m_{0}, \bar{m}_{1}, \bar{m}_{2}, \bar{m}_{3}$ being fixed, after reduction of the rotational symmetry, for sufficiently small $\mu, e$, each of families $D_{-,-,+,+}$, $D_{+,+,+,+}$, $D_{-,-,-,+}$, $D_{+,+,-,+}$, $D_{-,+,+,-}$, $D_{+,-,+,-}$ of the periodic orbits of $F_{Kep}+F_{res}$, provided that their existence is permitted by the fixed value of $\bar{m}_{1}$ and $\bar{m}_{3}$, can be continued to families of periodic orbits of $F$.
\end{prop}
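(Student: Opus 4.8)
The plan is to read the proposition off Poincar\'e's continuation theorem (Theorem \ref{Theo: Poincare continuation}), applied to the normal form $\phi^{*}F=F_{Kep}+F_{res}+F_{rem}$ produced by Proposition \ref{prop: elimination} and written in the reduced coordinates $(D_{1},D_{2},D_{3},\delta_{1},\delta_{2},\delta_{3},Z_{1},Z_{2},\eta_{1},\eta_{2})$ on a domain in $\T^{5}\times\R^{5}$, with the total angular momentum $Z_{3}$ frozen as the reduction parameter. In Poincar\'e's notation I take $F_{0}=F_{Kep}$ with action variables $I_{1}=(D_{1},D_{2},D_{3})$ --- equivalently the Delaunay actions $L_{1},L_{2},L_{3}$, on which $F_{Kep}$ has a diagonal and hence non-degenerate Hessian --- conjugate angles $(\delta_{1},\delta_{2},\delta_{3})$, and the remaining pair $I_{2}=(Z_{1},Z_{2})$, $\theta_{2}=(\eta_{1},\eta_{2})$. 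On the $4:2:1$-resonant torus one computes $\partial_{D_{1}}F_{Kep}=n_{1}-2n_{2}=0$, $\partial_{D_{2}}F_{Kep}=n_{2}-2n_{3}=0$, $\partial_{D_{3}}F_{Kep}=n_{3}\neq0$, so the unperturbed periodic orbit $\xi$ sits on this torus with only $\delta_{3}$ evolving and with $(\delta_{1},\delta_{2})$ as the critical arguments $\theta_{1}'$; this is exactly the configuration in which Theorem \ref{Theo: Poincare continuation} is stated.

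The first hypothesis of that theorem is the non-degeneracy of $\partial^{2}F_{Kep}/\partial L^{2}$ just recalled. For the second I would check that, for each of the six admissible families, $\xi$ is a non-degenerate critical point of $F_{res}$ regarded as a function of $(\delta_{1},\delta_{2},Z_{1},Z_{2},\eta_{1},\eta_{2})$ at $D_{i}=D_{i}^{0}$ and fixed $Z_{3}$. The stationarity equations split neatly: $\partial_{Z_{1}}F_{res}=\nu_{1}=0$ and $\partial_{Z_{2}}F_{res}=\nu_{2}=0$ are precisely the two relations on the eccentricities already solved along each one-parameter family, while $\partial_{\delta_{i}}F_{res}=\partial_{\eta_{i}}F_{res}=0$ hold automatically along any family $\mathcal{D}_{\pm,\pm,\pm,\pm}$, because $F_{res}$ is a combination of cosines of the arguments $\delta_{1}+2\eta_{1}$, $\delta_{1}+\eta_{1}$, $\delta_{2}+2\eta_{2}$, $\delta_{2}+\eta_{2}$, all of which lie in $\pi\Z$ at the collinear configuration so that the corresponding sines vanish. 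Non-degeneracy of the $6\times6$ Hessian, with the sign of its determinant, is then the content of the table, and holds for a given family precisely when the sign of $\bar{Q}=\sqrt[3]{2}\,\bar{A}\,\bar{m}_{3}-2\bar{B}\,\bar{m}_{1}$ permits that family --- that is, when it is allowed by the fixed values of $\bar{m}_{1}$ and $\bar{m}_{3}$.

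The delicate point, and the one I expect to be the main obstacle, is that $F_{res}$ is of order $\mu e$ whereas $F_{rem}=O(\mu e^{2})+O(\mu^{2})$, so ``sufficiently small $\mu,e$'' must mean the two limits taken in a definite order. I would split $F_{rem}$ into its $\delta_{3}$-independent part $\bar{F}_{res}-F_{res}=O(\mu e^{2})$, the tail of the eccentricity expansion of the $\delta_{3}$-average $\bar{F}_{res}=\langle F_{pert}\rangle_{\delta_{3}}$, and the genuinely $\delta_{3}$-dependent normalization remainder $F^{1}_{comp,1}=O(\mu^{2})$. First fix $e$ small enough that $\bar{F}_{res}=F_{res}+O(\mu e^{2})$ still has a non-degenerate critical point near the collinear configuration --- the $O(\mu e^{2})$ tail moves the critical point and perturbs the Hessian by amounts small relative to the scale of the Hessian in the table, so its determinant keeps the recorded sign --- and then, with $e$ so fixed, choose $\mu$ small (depending on $e$, i.e. on that Hessian scale) so that $F^{1}_{comp,1}=O(\mu^{2})=o(\mu)$ plays the role of Poincar\'e's higher-order remainder. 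With these choices $F_{res}+F_{rem}$ is an $O(\mu)$-perturbation of $F_{Kep}$ whose $\delta_{3}$-average $\bar{F}_{res}$ has a non-degenerate critical point, Theorem \ref{Theo: Poincare continuation} applies with small parameter $\mu$, and yields, for each admissible value of $Z_{3}$, a periodic orbit of the reduced $\phi^{*}F$; since everything depends smoothly on $Z_{3}$ over the range in which the eccentricities stay small and the chosen family persists, these organize into continued families, as claimed. Making this rigorous amounts precisely to tracking how the entries of the $6\times6$ Hessian, which sit at several different powers of $e$, fare under passage from the truncated $F_{res}$ to $\bar{F}_{res}$ and under restoration of $F^{1}_{comp,1}$, and this is what pins down the admissible joint smallness of $(\mu,e)$.
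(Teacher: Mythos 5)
Your proposal is correct and follows essentially the same route as the paper: Poincar\'e's continuation theorem applied with the non-degenerate diagonal Hessian of $F_{Kep}$ in the Delaunay actions as the first hypothesis, and the non-degeneracy of the $6\times6$ Hessian of $F_{res}$ in $(\delta_{1},\delta_{2},Z_{1},Z_{2},\eta_{1},\eta_{2})$ (the content of the table, conditioned on the sign of $\bar{Q}$) as the second. Your extra care about the joint smallness of $(\mu,e)$ and the splitting of $F_{rem}$ into its $O(\mu e^{2})$ and $O(\mu^{2})$ parts only elaborates on what the paper dispatches in one line.
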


\begin{rem}In \cite{DeSitter1909}, the required non-degeneracy condition is not verified before the analysis of their stability. The non-degeneracy at the linearly stable periodic orbit is verified \emph{a posteriori} by showing the non-degeneracy of its normal dynamics. 
\end{rem}

\subsection{Normal Dynamics/Linear Stability of the Periodic Orbits}
We now determine the normal dynamics and the linear stability of these periodic orbits. For this purpose, an analysis for those periodic orbits in $F_{Kep}+F_{res}$ suffices. For this, it is enough to fix $D_{3}$ and reduce the system by the $SO(2)$-symmetry of shifting $\delta_{3}$, which reduce the periodic orbits to equilibria $E_{-,-,+,+}$ and $E_{+,+,+,+}$ respectively in the reduced system. 

We aim to calculate the eigenvalues of the linearization at the corresponding equilibra by using the Hessian of $F_{Kep}+F_{res}$ in coordinates $(D_{1}, D_{2}, \delta_{1}, \delta_{2}, Z_{1}, Z_{2}, \eta_{1}, \eta_{2})$ with small $\mu$.  Since $F_{Kep}$ and $F_{res}$ do not appear in the same magnitude of $\mu$, it is enough to calculate the corresponding linearization matrix $\mathcal{L}$ from the Hessian $\mathcal{H}_{1}$ of $F_{Kep}$ with respect to $D_{1}, D_{2}$ and the Hessian $\mathcal{H}_{2}$ of $F_{res}$ with respect to all other variables by
$$\mathcal{L}=J \cdot Diag\{\mathcal{H}_{1}, \mathcal{H}_{2}\}$$
in which
\begin{center}
\begin{math}
J=\begin{pmatrix}
0 &0 & -1 & 0 & 0 & 0 & 0 & 0\\
0 & 0 & 0  & -1 & 0 & 0 & 0 & 0\\
1 & 0 & 0 & 0 & 0 & 0 & 0 & 0\\
0 & 1 & 0 & 0 & 0 & 0 & 0 & 0 \\
0 &0 & 0  & 0 & 0 & 0 & -1 & 0\\
0 & 0 & 0  & 0 & 0 & 0 & 0 & -1\\
0 & 0 & 0  & 0 & 1 & 0 & 0 & 0\\
0 & 0 & 0 & 0  & 0 & 1 & 0  & 0 \\
\end{pmatrix}.
\end{math}
\end{center}

We thus find that the linearization of the reduced flow at this point is a matrix of the form
\begin{center}
\begin{math}
\begin{pmatrix}
0 &0 & \mu^{2} e & 0 & 0 & 0 & \mu^{2} e & 0\\
0 & 0 & 0  & \mu^{2} e & 0 & 0 & 0 & \mu^{2} e\\
\mu^{-1} & \mu^{-1} & 0 & 0 & 0 & 0 & 0 & 0\\
\mu^{-1} & \mu^{-1} & 0 & 0 & 0 & 0 & 0 & 0 \\
0 &0 & \mu^{2} e  & 0 & 0 & 0 & \mu^{2} e & 0\\
0 & 0 & 0  & \mu^{2} e & 0 & 0 & 0 & \mu^{2} e\\
0 & 0 & 0  & 0 & e^{-3} & e^{-3} & 0 & 0\\
0 & 0 & 0 & 0  & e^{-3} & e^{-3} & 0  & 0 \\
\end{pmatrix}.
\end{math}
\end{center}
in which all entries are only indicated by their orders in $\mu$ and $e$.

We suppose that $e$ is positive, non-zero and small enough. In this section, we aim to analyze whether the eigenvalues of this matrix is distinct and purely imaginary. For this purpose, it is enough to only argue with the small parameter $\mu$ and we may leave the parameter $e$ aside for a moment. In Section \ref{Sec: KAM near periodic orbit}, both of the orders of the frequencies in small parameters $\mu$ and $e$ will be needed when we apply KAM theory to find librating quasi-periodic orbits.

Due to the appearance of entries having negative powers of the small parameter $\mu$, the calculation of the eigenvalues of this matrix is not convenient. This deficit is however found by De Sitter to be avoidable if we appropriately rescale the coordinates while preserving their symplecticity. 

We now treat $\delta_{i}, \,\eta_{i}, \, i=1,2$ as periodic functions on $\R$ instead of functions on $\T$, as otherwise we cannot continuously rescale them. Let $c_{0}=c_{0}(\mu)$  be the rescaling factor to be determined. We take $(\check{D}_{1}, \check{\delta}_{1}, \check{D}_{2}, \check{\delta}_{2},\check{Z}_{1}, \check{\eta}_{1}, \check{Z}_{2}, \check{\eta}_{2})$  as new coordinates with
\begin{equation*}
\left\{
\begin{array}{ll}
&D_{1}=\sqrt{c_{0}}\,\check{D}_{1}, \,\,\,\,\, \quad \qquad \qquad  \delta_{1}=\check{\delta}_{1}/\sqrt{c_{0}}, \\
&D_{2}=\sqrt{c_{0}}\,\check{D}_{2}, \,\,\,\,\, \quad \qquad \qquad \delta_{2}=\check{\delta}_{2}/\sqrt{c_{0}}, \\
&Z_{1}=\sqrt{c_{0}}\,\check{Z}_{1},  \,\,\quad \quad \qquad \qquad  \eta_{1}=\check{\eta}_{1}/\sqrt{c_{0}}, \\
&Z_{2}=\sqrt{c_{0}}\,\check{Z}_{2}, \,\,\quad \quad \qquad \qquad \eta_{2}=\check{\eta}_{2}/\sqrt{c_{0}}.
\end{array}\right.
\end{equation*}

By choosing \emph{a posteriori} $c_{0}=\sqrt{\mu^{3}}$, in these new coordinates, the corresponding linearization matrix takes the form (only the orders in $\mu$ are expressed)
\begin{center}
\begin{math}
\begin{pmatrix}
0 &0 & \sqrt{\mu}  & 0 & 0 & 0 & \sqrt{\mu} & 0\\
0 & 0 & 0  & \sqrt{\mu}  & 0 & 0 & 0 & \sqrt{\mu} \\
\sqrt{\mu}  & \sqrt{\mu}  & 0 & 0 & 0 & 0 & 0 & 0\\
\sqrt{\mu}  & \sqrt{\mu}  & 0 & 0 & 0 & 0 &  0 & 0 \\
0 &0 & \sqrt{\mu}  & 0 & 0 & 0 & \sqrt{\mu} & 0\\
0 & 0 & 0  & \sqrt{\mu}  & 0 & 0 & 0 & \sqrt{\mu} \\
0 & 0 & 0  & 0 & \sqrt{\mu^{3}} &\sqrt{\mu^{3}} & 0 & 0\\
0 & 0 & 0 & 0  & \sqrt{\mu^{3}} & \sqrt{\mu^{3}} & 0  & 0 \\
\end{pmatrix},
\end{math}
\end{center}
with every entry containing a common factor $\sqrt{\mu}$. This factor being ruled out, we arrive at a matrix having the form (again, only the orders in $\mu$ are expressed)
\begin{center}
\begin{math}
\begin{pmatrix}
0 &0 & 1  & 0 & 0 & 0 & 1 & 0\\
0 & 0 & 0  & 1  & 0 & 0 & 0 & 1 \\
1  & 1  & 0 & 0 & 0 & 0 & 0 & 0\\
1  & 1  & 0 & 0 & 0 & 0 &  0 & 0 \\
0 &0 & 1  & 0 & 0 & 0 & 1 & 0\\
0 & 0 & 0  &  1 & 0 & 0 & 0 & 1\\
0 & 0 & 0  & 0 & \mu & \mu & 0 & 0\\
0 & 0 & 0 & 0  & \mu & \mu & 0  & 0 \\
\end{pmatrix},
\end{math}
\end{center}
we observe that by letting $\mu$ tend to zero, 4 eigenvalues of this matrix tends to 0, with 4 other eigenvalues tending to the eigenvalues of the upper-left $4 \times 4$-block, and thus their square powers satisfy the quadratic equation
\begin{equation} \label{eq: 1st quadratic}
x^{2}-(A_{11} s+A_{22} s') x + (A_{11} A_{22}- A_{12}^{2}) s s'=0,
\end{equation}
in which 
$$A_{11}=\mu^{-1/2} \dfrac{\partial^{2} F_{Kep}}{\partial \check{D}_{1}^{2}}, A_{12}=\mu^{-1/2} \dfrac{\partial^{2} F_{Kep}}{\partial \check{D}_{1} \partial \check{D}_{2}}, A_{22}=\mu^{-1/2}  \dfrac{\partial^{2} F_{Kep}}{\partial \check{D}_{2}^{2}}, s=-\mu^{-1/2}  \dfrac{\partial^{2} F_{Kep}}{\partial \check{\delta}_{1}^{2}}, s'=-\mu^{-1/2}  \dfrac{\partial^{2} F_{Kep}}{\partial \check{\delta}_{2}^{2}}.$$

The determination of the other 4 eigenvalues is achieved by a proper rescaling of the unknown variable $\lambda$ in the characteristic polynomial of this matrix. Indeed, in the corresponding determinant, if we replacing $\lambda$ by $\sqrt{\mu}\, \lambda'$, we find that in the lower-right $4 \times 4$-block, the factor $\sqrt{\mu}$ appearing in the $\sqrt{\mu} \,\lambda'$'s can be ruled out by dividing out the factor $\sqrt{\mu}$ in the last two rows and (then) the fifth and sixth columns. By letting $\mu$ tend to zero, we obtain a determinant, which (under the condition $A_{11} A_{22}- A_{12}^{2}>0,\,s s' \neq 0$, which could be verified easily) gives rise to a quadratic equation in ${\lambda'}^{2}$ (which is found to have the same form of Eq. \ref{eq: 1st quadratic}):
\begin{equation} \label{eq: 2nd quadratic}
x^{2}-(B_{11} \sigma+B_{22} \sigma') x + (B_{11} B_{22}-B_{12}^{2}) \sigma \sigma'=0,
\end{equation}
%\marginlabel{which is of the same form as Eq. \ref{eq: 1st quadratic}, while that presented in \cite{DeSitter1909} is not extremely evidently so.}
in which 
$$B_{11}=\mu^{-3/2} \dfrac{\partial^{2} F_{res}}{\partial \check{Z}_{1}^{2}}, B_{12}=\mu^{-3/2} \dfrac{\partial^{2} F_{res}}{\partial \check{Z}_{1} \partial \check{Z}_{2}}, B_{22}=\mu^{-3/2} \dfrac{\partial^{2} F_{res}}{\partial \check{Z}_{2}^{2}}$$
and
$$ \sigma=-\mu^{-1/2} \dfrac{\partial^{2} F_{res}}{\partial \check{\eta}_{1}^{2}}+\mu^{-1/2} \left(\dfrac{\partial^{2} F_{res}}{\partial \check{\delta}_{1} \partial \check{\eta}_{1}}\right)\Big{/}\dfrac{\partial^{2} F_{res}}{\partial \check{\delta}_{1}^{2}}, \sigma'=-\mu^{-1/2} \dfrac{\partial^{2} F_{res}}{\partial \check{\eta}_{2}^{2}}+\mu^{-1/2} \left(\dfrac{\partial^{2} F_{res}}{\partial \check{\delta}_{2} \partial \check{\eta}_{2}}\right)\Big{/}\dfrac{\partial^{2} F_{res}}{\partial \check{\delta}_{2}^{2}}.$$

As such, we have obtained two quadratic equations. For small $\mu$ and small $e$, the corresponding periodic solutions are linear stable if both of these quadratic equations only have distinct\footnote{Otherwise, property being not open, a linearly stable periodic orbit is not necessarily continued to linearly stable periodic orbits for small parameters. De Sitter seems to have overlooked this point, though it does not change the result.} real, negative roots.

The discriminant of Eq. \ref{eq: 1st quadratic} is 
$$(A_{11} s+A_{22} s')^{2}-4 (A_{11} A_{22}- A_{12}^{2}) s s'=(A_{11} s-A_{22} s')^{2}+4 A_{12}^{2} s s'.$$
Since 
$$A_{11}<0, A_{22}<0, A_{11} A_{22}- A_{12}^{2}>0,$$
Eq. \ref{eq: 1st quadratic} has two distinct real negative roots if and only if 
$$s>0, \, s'>0.$$
As a result, by evaluating the values of $s$ and $s'$ at the corresponding points, we find that $E_{+,+,+,+}$, $E_{+,+,-,+}$, $E_{+,-,+,-}$ are unstable.

Having verified that $B_{11} B_{22}-B_{12}^{2}>0$ is satisfied in all the other cases (\emph{i.e.} for $E_{-,-,-,+}$, $E_{-,+,+,-}$, $E_{-,-,+,+}$), the analysis of Eq.  \ref{eq: 2nd quadratic} is similar. We thus find the following necessary and sufficient condition for Eq. \ref{eq: 2nd quadratic} to have two distinct real negative roots:
$$\sigma>0, \sigma'>0.$$
By evaluating the values of $
\sigma$ and $\sigma'$ at the corresponding points, we thus find that $E_{+,-,+,+}$, $E_{+,+,-,-}$ are unstable, and $E_{-,-,+,+}$ is the only linearly stable family. In terms of periodic orbits, we have

\begin{prop}(De Sitter, \cite{DeSitter1909}) $D_{-,-,+,+}$ is the only linearly stable family (parametrized by the small parameter $\mu$ and the eccentricity $e_{2}$) of periodic orbits in the $SO(2)$-reduced system of $F_{Kep}+F_{res}$ which can be continued to periodic orbits of $F$.
\end{prop}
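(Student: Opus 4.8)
The plan is to route the whole question through the two quadratic equations \eqref{eq: 1st quadratic} and \eqref{eq: 2nd quadratic} derived above and then, family by family, to read off the signs of their coefficients. First I would recall from Subsection~\ref{Subsection: Continuation} that, for masses permitting their existence, it is exactly the six families $D_{-,-,+,+}$, $D_{+,+,+,+}$, $D_{-,-,-,+}$, $D_{+,+,-,+}$, $D_{-,+,+,-}$, $D_{+,-,+,-}$ of periodic orbits of $F_{Kep}+F_{res}$ that continue to periodic orbits of $F$; since linear stability of a periodic orbit is an open property once its nontrivial Floquet multipliers are simple, and the correction $F-(F_{Kep}+F_{res})$ is only $O(\mu e^{2})+O(\mu^{2})$, it is enough to decide which of these six is linearly stable already for $F_{Kep}+F_{res}$. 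Fixing $D_{3}$ and reducing the $SO(2)$-action of shifting $\delta_{3}$ turns each family into an equilibrium $E_{\bullet}$ whose $8\times 8$ linearization is $\mathcal L=J\cdot\mathrm{Diag}\{\mathcal H_{1},\mathcal H_{2}\}$, so the problem becomes: for which $E_{\bullet}$ does $\mathcal L$ have simple, purely imaginary spectrum?

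Next I would perform the symplectic rescaling with $c_{0}=\sqrt{\mu^{3}}$ that brings $\mathcal L$ to the normalized form displayed above, and pass to the limit $\mu\to 0$. There the characteristic polynomial splits: four eigenvalues converge to those of the upper-left $4\times 4$ block, whose squares solve \eqref{eq: 1st quadratic}, while the remaining four, after the substitution $\lambda=\sqrt{\mu}\,\lambda'$, have limits whose squares solve \eqref{eq: 2nd quadratic}. Since the Keplerian Hessian is negative definite one has $A_{11},A_{22}<0$ and $A_{11}A_{22}-A_{12}^{2}>0$, so the identity $(A_{11}s+A_{22}s')^{2}-4(A_{11}A_{22}-A_{12}^{2})ss'=(A_{11}s-A_{22}s')^{2}+4A_{12}^{2}ss'$ shows that \eqref{eq: 1st quadratic} has two \emph{distinct} real negative roots precisely when $s>0$ and $s'>0$; after the analogous sign checks on the $B_{ii}$ and on $B_{11}B_{22}-B_{12}^{2}$ in the cases surviving the first test, \eqref{eq: 2nd quadratic} has two distinct real negative roots precisely when $\sigma>0$ and $\sigma'>0$. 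Thus everything reduces to computing the four numbers $s,s',\sigma,\sigma'$ — which, up to the factor $-\mu^{-1/2}$, are the second partials of $F_{res}$ in $\delta_{1},\delta_{2},\eta_{1},\eta_{2}$ (together with the mixed $\delta_{i}\eta_{i}$ terms) — at each of the six collinear equilibria.

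The remaining, and decisive, work is the sign bookkeeping. Substituting $(\delta_{1},\delta_{2},\eta_{1},\eta_{2})=\tfrac{\pi}{2}(1\pm1,\,1\pm1,\,1\pm1,\,1\pm1)$ into the explicit expression of $F_{res}$ and differentiating, each of $s,s',\sigma,\sigma'$ becomes $\pm$ a positive combination of the quantities $\bar A\bar m_{i}$, $\bar B\bar m_{i}$ (divided by eccentricities), whose sign is dictated by the four subscript signs together with, where relevant, the sign of $\bar Q=\sqrt[3]{2}\,\bar A\bar m_{3}-2\bar B\bar m_{1}$. One finds $s>0,s'>0$ to fail for $E_{+,+,+,+}$, $E_{+,+,-,+}$, $E_{+,-,+,-}$; among the remaining $E_{-,-,-,+}$, $E_{-,+,+,-}$, $E_{-,-,+,+}$ the condition $\sigma>0,\sigma'>0$ fails for the first two, leaving $E_{-,-,+,+}$ as the only equilibrium with simple purely imaginary spectrum. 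Undoing the rescaling and the $\delta_{3}$-reduction, $D_{-,-,+,+}$ is therefore the unique linearly stable continuable family. I expect the main obstacle to be exactly this last step: carrying the six (a priori sixteen) cases through the sign analysis consistently, and being careful to keep the roots \emph{distinct} rather than settling for real-negativity — it is simplicity of the spectrum that makes linear stability survive both the $O(\mu e^{2})+O(\mu^{2})$ remainder and the Poincar\'e continuation to $F$, the subtlety flagged in the footnote; the eigenvalue computation and the rescaling trick are routine once the normalized matrix is in hand.
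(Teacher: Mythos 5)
Your proposal is correct and follows essentially the same route as the paper: the same reduction of the six continuable families to equilibria, the same symplectic rescaling with $c_{0}=\sqrt{\mu^{3}}$ splitting the spectrum into the two quadratic equations, the same discriminant identity reducing everything to the sign conditions $s,s'>0$ and $\sigma,\sigma'>0$, and the same case-by-case evaluation singling out $E_{-,-,+,+}$. You also correctly insist on \emph{distinctness} of the negative roots (the openness subtlety from the paper's footnote), and your listing of the cases eliminated in the second round ($E_{-,-,-,+}$ and $E_{-,+,+,-}$) is the internally consistent version of what the paper states.
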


For $D_{-,-,+,+}$, we find from the condition $\nu_{1}=\nu_{2}=0$ that
$$e_{1}=\dfrac{2 \cdot 2^{5/6} \bar{A} \bar{m}_{2} e_{2}}{2 \sqrt{2} \bar{B} \bar{m}_{1}+2^{5/6} \bar{A} \bar{m}_{3}},\,e_{3}=\dfrac{\sqrt{2} \bar{B} \bar{m}_{2} e_{2}}{2 \sqrt{2} \bar{B} \bar{m}_{1}+2^{5/6} \bar{A} \bar{m}_{3}}.$$

The continuation of these linearly stable periodic orbits thus provide a mathematical explanation of the real evolution of the system Jupiter-Io-Europa-Ganymede, which was the base of De Sitter's theory of the Galilean satellites \cite{DeSitter1925}, \cite{DeSitter1931}.

\begin{question}
Is the family $D_{-,-,+,+}$ also linearly stable in the spatial problem?
\end{question}

\section{KAM theorem}\label{Section: KAM theorem}
In order to establish the existence of librating quasi-periodic orbits, we shall make use of a general KAM theorem with parameters according to J. F\'ejoz \cite{FejozStability},  \cite{FejozMoser}, \cite{FejozHabilitation}. We note that a similar KAM theorem has also been established in \cite{BHT}.

\subsection{Hypothetical Conjugacy Theorem}

For $p \ge 1$ and $q \ge 0$, consider the phase space $\R^{p} \times
\T^p \times \R^q \times \R^q = \{(I,\theta,x,y)\}$ endowed with the
standard symplectic form $d I \wedge d \theta + d x \wedge d y$. All
mappings are assumed to be analytic except when explicitly mentioned
otherwise.

Let $\delta > 0$, $q'\in\{0,...,q\}$, $q''=q-q'$, $\varpi \in \R^p$,
and $\beta \in \R^q$. Let $B_{\delta}^{p+2 q}$ be the $(p+2 q)$-dimensional closed ball with radius $\delta$ centered at the origin in $\R^{p+2q}$, and $N_{\varpi,\beta}= N_{\varpi,\beta}
(\delta,q')$ be the space of Hamiltonians $N \in C^{\omega}(\T^p
\times B_{\delta}^{p+2q},\R)$ of the form
$$N= c + \langle \varpi,I \rangle + \sum_{j=1}^{q'}
\beta_j (x_j^2+ y_j^2) + \sum_{j=q'+1}^{q} \beta_j (x_j^2 -
y_j^2)+\langle A_1(\theta), I \otimes I \rangle + \langle A_2(\theta),
I \otimes Z \rangle + O_3(I,Z),
$$ 
with $c\in \R$, $A_1 \in C^{\omega}(\T^p, \R^{p} \otimes \R^p), A_2
\in C^{\omega}(\T^p,\R^p \otimes \R^{2q})$ and $Z = (x,y)$. The
isotropic torus $\T^p \times \{0\} \times \{0\}$ is an invariant
$\varpi$-quasi-periodic torus of $N$, and its normal dynamics is
elliptic, hyperbolic, or a mixture of both types, with Floquet
exponents $\beta$. The definitions of tensor operations can be found in e.g. \cite[p.62]{FejozStability}.

Let $\bar{\gamma} > 0$ and $\bar{\tau} > p-1$, $|\cdot|$ be the $\ell^2$-norm on
$\Z^{p}$. Let
$HD_{\bar\gamma,\bar\tau}=HD_{\bar{\gamma},\bar{\tau}}(p,q',q'')$ be the set of vectors $(\varpi,\beta)$ satisfying the
following homogeneous Diophantine conditions:
$$|k \cdot \varpi + l' \cdot \beta'| \geq \bar\gamma
(|k|^{\bar\tau}+1)^{-1}$$ for all $k\in \Z^p \setminus \{0\}$ and $l'
\in \Z^{q'}$ with $|l'_1| + \cdots + |l'_{q'}| \leq 2$. We have
denoted $(\beta_1,...,\beta_{q'})$ by $\beta'$. {Let $\|\cdot\|_{s}$ be the $s$-analytic norm of an analytic function, \emph{i.e.}, the supremum norm of its analytic extension to the $s$-neighborhood of its (real) domain in the complexified space $\C^{p} \times \C^{p}/\Z^{p}$.}

\begin{theo} \label{KAM} Let $(\varpi^o,\beta^o) \in
  HD_{\bar\gamma,\bar\tau}$ and $N^o \in N_{\varpi^o,\beta^o}$. For
 some $d>0$ small enough,  there exists $\varepsilon>0$ such that for every
  Hamiltonian $N' \in C^\omega(\T^p \times B_\delta^{p+2q})$ such that
  $$\|N'-N^o\|_d \leq \varepsilon,$$
 there exists a vector $(\varpi,\beta)$ satisfying the following
  properties:
  \begin{itemize}
  \item the map $N' \mapsto (\varpi,\beta)$ is of class $C^\infty$ and
    is $\varepsilon$-close to $(\varpi^o,\beta^o)$ in the
    $C^\infty$-topology;
  \item if $(\varpi,\beta) \in HD_{\bar\gamma,\bar\tau}$, $N'$ is 
    symplectically smoothly conjugate to a Hamiltonian $N \in
    N_{\varpi,\beta}$. 
  \end{itemize}
 Moreover, $\varepsilon$ can be chosen of the form $\hbox{Cst} \, \bar\gamma^{k}$
  (for some $\hbox{Cst}>0$, $k\geq 1$) when $\bar\gamma$ is small enough.
\end{theo}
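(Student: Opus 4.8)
The plan is to follow the proof of the hypothetical conjugacy theorem, which goes back to M.~Herman and which we take in the streamlined form of F\'ejoz \cite{FejozStability,FejozMoser,FejozHabilitation} (a closely related statement is in \cite{BHT}). The idea is to express the sought conjugacy as the local surjectivity, near a base point, of an explicit nonlinear operator, and to establish that surjectivity by a hard (Nash--Moser type) inverse function theorem in a scale of Banach spaces of analytic functions on complex neighbourhoods of $\T^p\times B_\delta^{p+2q}$. The essential device is a finite-dimensional space of \emph{counterterms}: let $\mathcal{G}$ be the group of analytic exact symplectomorphisms of $\T^p\times B_\delta^{p+2q}$, isotopic to the identity, fixing the isotropic torus $\T^p\times\{0\}\times\{0\}$ to sufficiently high order, and let $\mathcal{T}$ be the $(1+p+q)$-dimensional space of Hamiltonians $\hat c+\langle\hat\varpi,I\rangle+\sum_{j=1}^{q'}\hat\beta_j(x_j^2+y_j^2)+\sum_{j=q'+1}^{q}\hat\beta_j(x_j^2-y_j^2)$. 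First I would study the operator
$$\Phi:(g,N,\hat N)\in\mathcal{G}\times N_{\varpi^o,\beta^o}\times\mathcal{T}\ \longmapsto\ g^*(N+\hat N),$$
for which $\Phi(\mathrm{id},N^o,0)=N^o$ and for which $\Phi(g,N,\hat N)=N'$ says exactly that $N'$ is symplectically conjugate to $N+\hat N\in N_{\varpi,\beta}$ with $(\varpi,\beta):=(\varpi^o+\hat\varpi,\ \beta^o+\hat\beta)$.

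The analytic heart, and the step I expect to be the most delicate, is a \emph{tame} right inverse of the linearisation $D\Phi$ — at the base point and along the iteration — losing only a fixed amount $\sigma$ of analyticity width and with operator norm controlled by a negative power of $\bar\gamma$. Writing the linearised equation $\{N^o,u\}+\delta N+\delta\hat N=f$ for the generating Hamiltonian $u$ of $\delta g$ and decomposing $f$ into Fourier modes in $\theta$ and monomials in $(I,Z)$, it splits into: the homological equation $\partial_{\varpi^o}u=(\text{nonresonant part of }f)$ over $\T^p$; linear algebraic equations in the normal variables $Z=(x,y)$ governed by the divisors $k\cdot\varpi^{o}+l'\cdot\beta^{o}$ with $k\in\Z^{p}\setminus\{0\}$, $l'\in\Z^{q'}$, $|l'_1|+\cdots+|l'_{q'}|\le 2$ (only the elliptic Floquet exponents enter — the hyperbolic block, with real eigenvalues bounded away from the imaginary axis, contributes no small divisor); and a finite list of $\theta$-independent obstructions, absorbed by the counterterm (the constant by $\hat c$, the $I$-linear term by $\hat\varpi$, the resonant normal quadratics by $\hat\beta$) and by the $A_1,A_2,O_3$ components of $\delta N$ (the higher $\theta$-averages). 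It is precisely the homogeneous Diophantine condition $(\varpi^o,\beta^o)\in HD_{\bar\gamma,\bar\tau}$ that makes this system solvable with a bound of the form $\|u\|_{s-\sigma}\le \mathrm{Cst}\,\bar\gamma^{-1}\sigma^{-\bar\tau-\nu}\|f\|_{s}$. Inserting this into F\'ejoz's abstract hard inverse function theorem — equivalently, running the associated quadratically convergent Newton/KAM iteration with geometrically summable width losses — produces, for $\|N'-N^o\|_d\le\varepsilon$, a solution $(g,N,\hat N)$ depending tamely on $N'$, and one sets $(\varpi,\beta):=(\varpi^o,\beta^o)+(\hat\varpi,\hat\beta)(N')$. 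Tracking the powers of $\bar\gamma^{-1}$ through the iteration fixes the smallness threshold at $\varepsilon=\mathrm{Cst}\,\bar\gamma^{k}$ for small $\bar\gamma$.

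It remains to obtain the two asserted properties, and here I expect the main conceptual obstacle: the Diophantine condition one needs in order to run the scheme is a condition on the scheme's own output $(\varpi,\beta)$. Herman's device for breaking this circularity is that the counterterm increments produced at each Newton step are read off from the current error \emph{without any division} — only the transformation $g$ involves the small divisors — so the sequence $(\hat\varpi_n,\hat\beta_n)$, hence its limit, is a composition of smoothing operators and of finite-dimensional maps and makes sense \emph{for every} $N'$ near $N^o$; together with the tameness of the construction and the usual smoothing/Whitney-extension argument this gives that $N'\mapsto(\varpi,\beta)$ is $C^\infty$ and $\varepsilon$-close to $(\varpi^o,\beta^o)$. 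The genuine analytic convergence of the $g$-part — i.e.\ the actual conjugacy to an element of $N_{\varpi,\beta}$ — is then \emph{conditional}: if the limit $(\varpi,\beta)$ lies in $HD_{\bar\gamma,\bar\tau}$, then, because the increments $(\hat\varpi_{n+1}-\hat\varpi_n,\hat\beta_{n+1}-\hat\beta_n)$ are super-exponentially small while the Fourier truncation orders $K_n$ grow only geometrically, all intermediate pairs $(\varpi^o+\hat\varpi_n,\ \beta^o+\hat\beta_n)$ satisfy the same Diophantine conditions with constant $\bar\gamma/2$, the small-divisor estimates above remain valid throughout, and the iteration converges to an analytic symplectomorphism conjugating $N'$ to $N+\hat N\in N_{\varpi,\beta}$. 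The remaining ingredients — the abstract hard IFT, the tame estimates on $\mathcal{G}$ and on compositions, and the smoothing operators — are by now standard and I would quote them from \cite{FejozStability,FejozHabilitation}.
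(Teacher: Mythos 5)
Your proposal is correct and follows essentially the same route as the paper: the paper does not reprove this statement but derives it directly from the ``Hypothetical Conjugacy Theorem'' of F\'ejoz (with analyticity of the conjugacy deferred to \cite{FejozMoser}), and your sketch is an accurate outline of precisely that theorem's proof via Herman's counterterm method and a tame inverse function theorem. The only difference is one of presentation: you unpack the cited black box, while the paper simply invokes it.
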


Since analytic functions are $C^{\infty}$, this theorem directly follow from the corresponding ``Hypothetical Conjugacy Theorem'' of~\cite{FejozStability}. The fact that the aformentioned symplectic conjugation is actually analytic will appear in \cite{FejozMoser}.

\subsection{An Iso-chronic KAM theorem}

We now assume that the Hamiltonians $N^o=N^o_\iota$ and $N' = N'_\iota$
depend analytically on some parameter
$\iota \in B_1^{p+q}$. Recall that, for each $\iota$, $N^o_\iota$ is
of the form
\small
$$N^o_\iota= c^o_\iota + \langle \varpi^o_\iota,I \rangle + \sum_{j=1}^{q'}
\beta^o_{\iota,j} (x_j^2+ y_j^2) + \sum_{j=q'+1}^{q} \beta^o_{\iota,j}
(x_j^2 - y_j^2)+\langle A_{\iota,1}(\theta), I \otimes I \rangle +
\langle A_{\iota,2} (\theta), I \otimes Z \rangle + O_3(I,Z).$$\normalsize
Thm~\ref{KAM} can be applied to $N^o_\iota$ and $N'_\iota$ for each
$\iota$.  We will now add some non-degeneracy condition to the hypotheses of Thm \ref{KAM}, which ensures that
``$(\varpi_\iota,\beta_\iota) \in HD_{\bar\gamma,\bar\tau}$'' actually
occurs often in the set of parameters.

Denote by 
$$HD^o = \left\{(\varpi^o_\iota,\beta^o_\iota) \in
  HD_{\bar\gamma,\bar\tau}: \, \iota \in 
  B_{1/2}^{p+q}\right\}$$ the set of ``accessible''
$(\bar\gamma,\bar\tau)$-Diophantine unperturbed frequencies. The
parameter is restricted to a smaller ball so as to avoid boundary
problems.

\begin{cor}[Iso-chronic KAM theorem] \label{cor: isochron nondeg} Assume the
  map
  $$B_1^{p+q} \rightarrow \R^{p+q}, \quad \iota \mapsto
  (\varpi_\iota^o,\beta_\iota^o)$$ is a diffeomorphism onto its image.
  If $\varepsilon$ is small enough and if $\|N'_\iota-N^o_\iota\|_{d} <
  \varepsilon$ for each $\iota$, then for every $(\varpi,\beta) \in HD^o$ there exists a unique $\iota \in
  B^{p+q}_1$ such that $N'_\iota$ is symplectically conjugate to some
  $N \in N_{\varpi,\beta}$. Moreover, there exists $\bar{\gamma}>0, \bar{\tau}>p-1$, such that the set
  $$\{\iota \in B_{1/2}^{p+q}: \, (\varpi_\iota,\beta_\iota) \in
  HD^o\}$$ has positive Lebesgue measure.
\end{cor}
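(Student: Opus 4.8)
The plan is to feed the parametrized family $(N'_\iota)_\iota$ into Thm \ref{KAM} and then to read off both conclusions from the resulting frequency map together with the classical measure estimate for Diophantine vectors. Since $\iota\mapsto N'_\iota$ and $\iota\mapsto N^o_\iota$ are analytic, Thm \ref{KAM} (applied at each $\iota$) produces a map $\Phi\colon\iota\mapsto(\varpi_\iota,\beta_\iota)$ that is $C^\infty$ on $B_1^{p+q}$ and, whenever $\|N'_\iota-N^o_\iota\|_d<\varepsilon$, is $\varepsilon$-close in the $C^1$-topology to the unperturbed frequency map $\Phi_0\colon\iota\mapsto(\varpi^o_\iota,\beta^o_\iota)$. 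Because $\Phi_0$ is by hypothesis a diffeomorphism onto its image, after possibly shrinking the parameter ball there are constants $0<c\le C$ with $c|\iota_1-\iota_2|\le|\Phi_0(\iota_1)-\Phi_0(\iota_2)|\le C|\iota_1-\iota_2|$; hence for $\varepsilon<c$ the perturbed map $\Phi$ is still injective (by convexity of the ball and the mean value inequality) with everywhere invertible differential, i.e.\ a bi-Lipschitz diffeomorphism onto its image, and $\Phi^{-1}$ is $\hbox{Cst}\,\varepsilon$-close to $\Phi_0^{-1}$ on the region of interest.

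For the first assertion, given $(\varpi,\beta)\in HD^o$, the definition of $HD^o$ gives $(\varpi,\beta)=\Phi_0(\iota^o)$ for some $\iota^o\in B_{1/2}^{p+q}$; since $\overline{B_{1/2}^{p+q}}$ is interior to $B_1^{p+q}$, a Brouwer fixed point (or degree) argument based on $\|\Phi-\Phi_0\|_{C^0}\le\varepsilon$ yields, for $\varepsilon$ small, a point $\iota$ with $\Phi(\iota)=(\varpi,\beta)$, which is $O(\varepsilon)$-close to $\iota^o$ hence lies in $B_1^{p+q}$, and which is unique by injectivity of $\Phi$. Then $(\varpi_\iota,\beta_\iota)=(\varpi,\beta)\in HD^o\subset HD_{\bar\gamma,\bar\tau}$, so the second bullet of Thm \ref{KAM} applies and $N'_\iota$ is symplectically conjugate to some $N\in N_{\varpi,\beta}$.

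For the measure statement I would fix $\bar\tau>p-1$ and invoke the standard estimate: inside any fixed bounded region of $\R^{p+q}$, the set of $(\varpi,\beta)$ that violate $|k\cdot\varpi+l'\cdot\beta'|\ge\bar\gamma(|k|^{\bar\tau}+1)^{-1}$ for some admissible $(k,l')$ has measure $\le\hbox{Cst}\,\bar\gamma$, because there are finitely many $l'$ with $|l'_1|+\cdots+|l'_{q'}|\le2$, each $(k,l')$ excising a slab of measure $\le\hbox{Cst}\,\bar\gamma(|k|^{\bar\tau}+1)^{-1}|k|^{-1}$, and $\sum_{k\ne0}|k|^{-\bar\tau-1}<\infty$ precisely because $\bar\tau>p-1$. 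Setting $K:=\Phi_0(B_{1/4}^{p+q})$, which is compact with positive Lebesgue measure, for $\bar\gamma$ small enough $K\cap HD_{\bar\gamma,\bar\tau}$ has measure $\ge|K|-\hbox{Cst}\,\bar\gamma>0$ and is contained in $HD^o$. Since $\Phi^{-1}$ is bi-Lipschitz and $\hbox{Cst}\,\varepsilon$-close to $\Phi_0^{-1}$, it maps $K\cap HD_{\bar\gamma,\bar\tau}$ injectively into $B_{1/2}^{p+q}$ (as $\Phi_0^{-1}(K)=\overline{B_{1/4}^{p+q}}$) without collapsing its measure, and the image is contained in $\{\iota\in B_{1/2}^{p+q}:(\varpi_\iota,\beta_\iota)\in HD^o\}$, which therefore has positive measure. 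One bookkeeping point is to keep $\bar\gamma$ small enough that the threshold $\varepsilon=\hbox{Cst}\,\bar\gamma^k$ furnished by Thm \ref{KAM} stays below the ($\bar\gamma$-independent) smallness required above, which is possible since $\hbox{Cst}\,\bar\gamma^k\to0$ as $\bar\gamma\to0$.

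The step I expect to be the main obstacle is the one in the first paragraph: turning the $C^1$-closeness that Thm \ref{KAM} provides into the global statement that $\Phi$ is a genuine bi-Lipschitz diffeomorphism onto its image whose image still covers all of $HD^o$ — equivalently, that the unique $\iota$ attached to a prescribed Diophantine frequency vector does not escape the parameter ball. By comparison, the Diophantine measure estimate and the invariance of positive measure under bi-Lipschitz maps are entirely routine.
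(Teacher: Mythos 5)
Your argument is correct and is exactly the standard derivation of the iso-chronic statement from the hypothetical conjugacy theorem: invert the perturbed frequency map $\iota\mapsto(\varpi_\iota,\beta_\iota)$, which stays a diffeomorphism because it is $C^1$-close to the unperturbed one, and then pull back the classical measure estimate for $HD_{\bar\gamma,\bar\tau}$ (convergent for $\bar\tau>p-1$) through this bi-Lipschitz inverse. The paper itself gives no proof of this corollary and simply inherits it from the ``Hypothetical Conjugacy Theorem'' framework of F\'ejoz, where the argument is precisely the one you wrote; the only point worth tightening is that global uniqueness of $\iota$ in all of $B_1^{p+q}$ needs the lower Lipschitz bound for $\Phi_0$ to hold uniformly up to the boundary of the ball (or the statement to be read on a slightly smaller ball), a caveat you already flag yourself.
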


\begin{cor} [Proper-degenerate Iso-chronic KAM theorem] \label{cor: proper deg isochron nondeg}

  Assume 
  $$(\varpi_\iota^{o},\beta_\iota^{o})=(\varpi_\iota^{o,1},\beta_\iota^{o,1}, \varpi_\iota^{o,2},\beta_\iota^{o,2}, \cdots, \varpi_\iota^{o,\bar{n}},\beta_\iota^{o,\bar{n}} )$$ such that for $i_{j}=\hbox{dim} (\varpi_\iota^{o,j},\beta_\iota^{o,j})$, $\sum i_{j}=p+q$ and such that for any $j =1,2,\cdots, \bar{n}$, the frequencies involved in $(\varpi_\iota^{o,j},\beta_\iota^{o,j})$ appear in the same magnitude of some small quantity $\epsilon_{j}$. For small enough $1>>\epsilon_{1}>> \cdots >> \epsilon_{\tilde{n}}>0$ if
  the
  maps
  $$\bar{\nu}_{i}: B_1^{i_{j}} \rightarrow \R^{i_{j}}, \quad \iota \mapsto
  (\varpi_\iota^{o,j},\beta_\iota^{o,j})$$ are diffeomorphisms onto their images.
  then the
  map
  $$\bar{\nu}_{i}: B_1^{p+q} \rightarrow \R^{p+q}, \quad \iota \mapsto
  (\varpi_\iota^{o},\beta_\iota^{o})$$ is a diffeomorphism onto their images. 
  
  Consequently, we may choose $\varepsilon=\hbox{Cst} \epsilon_{1}^{N} \bar{\gamma}^{k}$ for large enough integer numbers $N$ and $k$.
\end{cor}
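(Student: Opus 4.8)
The plan is to deduce the statement from Corollary \ref{cor: isochron nondeg} by converting the scale separation $1 \gg \epsilon_1 \gg \cdots \gg \epsilon_{\tilde n} > 0$ into a block-triangular structure for the frequency Jacobian, and then to track how the threshold of Theorem \ref{KAM} degrades with these scales. First I would rescale the blocks: partition $\iota = (\iota_1, \dots, \iota_{\bar n})$ with $\iota_j \in B_1^{i_j}$ matching the decomposition of $(\varpi_\iota^o,\beta_\iota^o)$, and, using that every frequency in the $j$-th block has magnitude $\epsilon_j$, write $(\varpi_\iota^{o,j},\beta_\iota^{o,j}) = \epsilon_j\,\hat\nu_j(\iota)$ with $\hat\nu_j$ and its first derivatives of order $1$ on $B_1^{p+q}$. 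Then $\hat\nu = (\hat\nu_1,\dots,\hat\nu_{\bar n}) : B_1^{p+q} \to \R^{p+q}$ is an $O(1)$ map, $(\varpi^o,\beta^o) = \mathrm{Diag}(\epsilon_1 \mathrm{Id}_{i_1},\dots,\epsilon_{\bar n}\mathrm{Id}_{i_{\bar n}})\,\hat\nu$, and $(\varpi^o,\beta^o)$ is a diffeomorphism onto its image if and only if $\hat\nu$ is.

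Next I would examine $D\hat\nu$ in $\bar n \times \bar n$ block form with $(j,k)$-block $\partial\hat\nu_j/\partial\iota_k$. By hypothesis the diagonal block $\partial\hat\nu_j/\partial\iota_j$ agrees, to leading order, with the Jacobian of $\bar\nu_j : B_1^{i_j} \to \R^{i_j}$, hence is invertible, uniformly after shrinking to $B_{1/2}^{i_j}$. The key observation is that a coarse block couples only weakly to finer parameters: any contribution to $(\varpi^{o,j},\beta^{o,j})$ that actually depends on $\iota_k$ with $k > j$ is produced at a scale $\le \epsilon_{j+1}$, so $\partial\hat\nu_j/\partial\iota_k = O(\epsilon_{j+1}/\epsilon_j)$ for $k > j$. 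Therefore $D\hat\nu$ is block-lower-triangular with invertible diagonal blocks, up to an error of norm $O(\max_j \epsilon_{j+1}/\epsilon_j)$; under the standing assumption that the $\epsilon_j$ are sufficiently separated this error is negligible, $\det D\hat\nu$ stays bounded away from $0$ on $B_{1/2}^{p+q}$, and $\hat\nu$ is a local diffeomorphism there. Shrinking once more and using the same triangular structure to bound the Lipschitz constant of the (block-wise constructed) inverse yields global injectivity, hence a diffeomorphism onto its image; by the first step the same holds for $\iota \mapsto (\varpi^o_\iota,\beta^o_\iota)$, so Corollary \ref{cor: isochron nondeg} applies.

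For the explicit threshold I would unwind the rescalings. In Theorem \ref{KAM} the admissible perturbation size is $\mathrm{Cst}\,\bar\gamma^k$, where the constant depends on $N^o$ only through lower bounds for the non-degeneracy — after the first step this reduces to $|\det D\hat\nu|^{-1} = O(1)$ — and through the overall factor $\epsilon_1^{-1}$ one must apply to the Hamiltonian to normalize the fastest block (equivalently, a time rescaling by $\epsilon_1$). Undoing that rescaling multiplies both $N^o$ and the admissible perturbation by $\epsilon_1$, and the further normalizations at the scales $\epsilon_2,\dots,\epsilon_{\bar n}$ contribute a fixed additional power of $\epsilon_1$ whenever the $\epsilon_j$ are commensurate powers of $\epsilon_1$, which is the case in all the applications of Sections \ref{Sec: KAM near periodic orbit} and \ref{Section: A complete system of Galilean satellites}. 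Collecting the factors, $\varepsilon$ can be taken of the form $\mathrm{Cst}\,\epsilon_1^N\bar\gamma^k$ for suitable integers $N,k$.

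The main obstacle is the estimate $\partial\hat\nu_j/\partial\iota_k = O(\epsilon_{j+1}/\epsilon_j)$ for $k > j$: one must check, in each concrete model, that the coarse-scale frequency blocks genuinely depend on the fine-scale parameters only through terms of that relative size, so that the rescaled Jacobian is a small perturbation of an invertible block-triangular matrix rather than an unstructured $O(1)$ matrix. This structural verification is tied to the very same bookkeeping — keeping track of at which order of $\epsilon_1$ each block and each coupling appears — that converts the accumulated rescalings of the last step into the single power $\epsilon_1^N$.
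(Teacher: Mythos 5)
Your proposal follows essentially the same route as the paper: the paper's proof is a brief reference to \cite[Example-Condition 5.3]{QuasiLunar} and \cite[Section 6.3.2]{ArnoldEncyclopedia} asserting that the determinant of the block-diagonal part of the Jacobian dominates the remaining terms, and your rescaling of each block by $\epsilon_j$ followed by the near-block-triangularity argument is precisely the standard way of making that domination precise. Your explicit flagging of the needed off-diagonal smallness $\partial\hat\nu_j/\partial\iota_k = O(\epsilon_{j+1}/\epsilon_j)$ for $k>j$ is a genuine hypothesis that the statement and the paper's proof leave implicit (without it the uniform magnitude assumption alone does not prevent the off-diagonal Leibniz terms from being of the same order $\prod_j\epsilon_j^{i_j}$ as the diagonal one), so making it explicit and deferring its verification to the concrete models is the correct reading of the corollary.
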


The proof goes in the same way as \cite[Example-Condition 5.3]{QuasiLunar}, in which it is enough to replace the action variables by general parameters, and the Hessian matrix by the corresponding Jacobian matrix. Again, it is enough to observe that the determinant of the block diagonal matrix with blocks representing the Jacobian matrix of the mapping $\bar{\nu}_{i}$ dominates the rest terms in the expression of the determinant of the Jacobian matrix of $\bar{\nu}$, as in the same spirit of \cite[Section 6.3.2]{ArnoldEncyclopedia}.

\section{Invariant KAM Tori around the Linearly Stable Periodic Orbits of De Sitter} \label{Sec: KAM near periodic orbit}

Theoretically, a better mathematical theory for the Galilean satellites could be given by the quasi-periodic orbits on the possibly-existing invariant KAM tori around this linearly stable periodic orbit, since as noticed by De Sitter himself \cite{DeSitter1909}, the $1:2:4$ resonance is only satisfied roughly by the inner three Galilean satellites. In this section, we shall explore some extensions of De Sitter's study by application of KAM theory.

We have shown that for $0<\mu<<1$ and $0<e<<1$, the elliptic equilibrium $E_{-,-,+,+}$ of the $SO(2) \times SO(2)$-reduced system $F_{Kep}+F_{res}$ is non-degenerate, with normal frequencies appearing at different orders in small quantities $\mu$ and $e$. 

We list the orders of these frequencies:
\begin{enumerate}
\item Frequency of the periodic orbit, which is the Keplerian frequency $\nu_{Kep, 3}$ of $\delta_{3}$. It is non-zero and of order $1$.

\item Elliptic normal frequencies:  It is direct to deduce from the dependence of the coefficients in Eqs. \ref{eq: 1st quadratic} and \ref{eq: 2nd quadratic}, that for $0<\mu<<e<<1$, two of them (denoted by $\nu_{n, 1}, \nu_{n, 2}$) are of order $\sqrt{\mu} \sqrt{e}$, the other two of them (denoted by $\nu_{n, 3}, \nu_{n, 4}$) are of order $\mu e^{-1}$.
\end{enumerate}

With the hypothesis $0<\mu<<e<<1$, the two normal frequencies are small in their orders compared to the frequency of the periodic orbit: consequently $\nu_{n, 1}, \nu_{n, 2}$ is of smaller order compared to $\nu_{n, 3}, \nu_{n, 4}$. 

To apply KAM theorems, we have to build higher order normal forms to control the smallness of the perturbation. We therefore consecutively eliminate the angle $\delta_{3}$ from the remainder $F_{rem}$ to have the higher-order remainder to be of the order $O(\mu e^{N}+\mu^{N})$ for any prescribed large enough $N$ by the same method as Prop \ref{prop: elimination}. The system $F_{Kep}+F_{res}+F_{rem}$ is thus analytically conjugate to 
$F_{Kep}+F_{res}^{N}+F_{rem}^{N}$
in which 
\begin{itemize}
\item $F_{res}^{N}-F_{res}=O(\mu e^{2}+\mu^{2} )$,
\item $F_{res}^{N}$ is independent of $l_{3}$,
\item For $N=2$, $F_{res}^{2}=F_{res}$, and
\item $F_{rem}^{N}$ is of order $O(\mu e^{N})+O(\mu^{N})$.
\end{itemize}

We now analyze the dynamics of $F_{Kep}+F_{res}^{N}$. After being symplectically reduced by the rotational $SO(2)$-symmetry, and the $SO(2)$-symmetry of shifting $l_{3}$, the reduced system is an $O(\mu^{2}+\mu e^{2})$-perturbation of the reduced system of $F_{Kep}+F_{res}$. Since the equilibrium point $E_{-,-,+,+}$ of the latter is non-degenerate, this equilibrium point continues to exist for small enough $\mu$ and $e$, and give rise to an non-degenerate normally elliptic equilibrium $E^{N}_{-,-,+,+}$ of the reduced system of $F_{Kep}+F_{res}^{N}$, which further give rise to a family of normally elliptic periodic orbits $D^{N}_{-,-,+,+}$ of $F_{Kep}+F_{res}^{N}$ (with only the rotational $SO(2)$-symmetry reduced). We thus reach the following 

\begin{lem} For any $N$ and small enough $\mu$ and $e$, there exists a family of normally elliptic periodic orbits $D^{N}_{-,-,+,+}$ of $F_{Kep}+F_{res}^{N}$ (reduced only by the rotational $SO(2)$-symmetry) continuing the family $D^{N}_{-,-,+,+}$ of $F_{Kep}+F_{res}$.
\end{lem}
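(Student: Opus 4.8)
The plan is to realize the claimed family of normally elliptic periodic orbits of $F_{Kep}+F^{N}_{res}$ as the continuation of the corresponding family of $F_{Kep}+F_{res}$ by a persistence-of-normally-hyperbolic/elliptic-equilibria argument, exactly in the spirit of Poincar\'e's continuation method (Thm \ref{Theo: Poincare continuation}) but now applied to the equilibrium $E_{-,-,+,+}$ of the doubly reduced system rather than to a periodic orbit. First I would pass to the doubly reduced system: reduce $F_{Kep}+F^{N}_{res}$ by the rotational $SO(2)$-symmetry (fixing $Z_{3}$, dropping $\eta_{3}=g_{3}$) and by the $SO(2)$-symmetry of shifting $\delta_{3}=l_{3}$ (fixing $D_{3}$, dropping $\delta_{3}$); this is legitimate since, by Prop \ref{prop: elimination} and the higher-order elimination procedure used just above, $F^{N}_{res}$ is independent of $\delta_{3}$, and it is independent of $\eta_{3}$ by rotation invariance. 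The reduced phase space is a subset of $\T^{4}\times\R^{4}$ with coordinates $(D_{1},D_{2},\delta_{1},\delta_{2},Z_{1},Z_{2},\eta_{1},\eta_{2})$, and the reduced Hamiltonian is, by the bullet $F^{N}_{res}-F_{res}=O(\mu e^{2}+\mu^{2})$, an $O(\mu e^{2}+\mu^{2})$-perturbation of the reduced $F_{Kep}+F_{res}$ whose equilibrium $E_{-,-,+,+}$ we have already shown (in Section \ref{Section: Periodic Orbits of De Sitter}) to be non-degenerate and normally elliptic with the frequency orders listed above.

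Next I would invoke the implicit function theorem to continue the equilibrium: the reduced vector field vanishes at $E_{-,-,+,+}$ for the unperturbed $F_{Kep}+F_{res}$, and its linearization there is invertible because the eigenvalues computed via Eqs. \ref{eq: 1st quadratic} and \ref{eq: 2nd quadratic} are nonzero (they are distinct, nonzero, purely imaginary for $0<\mu\ll e\ll1$). Hence for $\mu,e$ small the reduced vector field of $F_{Kep}+F^{N}_{res}$ has a unique nearby zero $E^{N}_{-,-,+,+}$, depending analytically (or at least smoothly) on the parameters; moreover, since the eigenvalues of the linearization depend continuously on the $O(\mu e^{2}+\mu^{2})$-perturbation and the set of matrices with distinct purely imaginary spectrum is open, $E^{N}_{-,-,+,+}$ remains non-degenerate and normally elliptic, with the same frequency orders $\sqrt{\mu}\sqrt{e}$ and $\mu e^{-1}$. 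One should record here that for $N=2$ one has $F^{2}_{res}=F_{res}$ and $E^{2}_{-,-,+,+}=E_{-,-,+,+}$, so the family indeed continues the stated one.

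Finally I would un-reduce one step: the equilibrium $E^{N}_{-,-,+,+}$ of the system reduced by both $SO(2)$-symmetries corresponds, in the system reduced only by the rotational $SO(2)$-symmetry, to a periodic orbit $D^{N}_{-,-,+,+}$ — namely the orbit obtained by letting $\delta_{3}$ run with its Keplerian frequency $\nu_{Kep,3}\neq0$ while $(D_{1},D_{2},\delta_{1},\delta_{2},Z_{1},Z_{2},\eta_{1},\eta_{2})$ stay fixed at $E^{N}_{-,-,+,+}$. The normal elliptic structure of the equilibrium translates into normal ellipticity of this periodic orbit (the Floquet exponents of $D^{N}_{-,-,+,+}$ transverse to the $\delta_{3}$-direction are precisely the eigenvalues of the linearization at $E^{N}_{-,-,+,+}$), and since $F^{N}_{res}$ is $\delta_{3}$-independent the full reduced-by-$SO(2)$ flow near $D^{N}_{-,-,+,+}$ is a skew product over the $\delta_{3}$-rotation, so no resonance between $\nu_{Kep,3}$ and the normal frequencies is needed for the existence statement. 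I expect the only genuine subtlety — hardly an obstacle — to be bookkeeping: checking that the $O(\mu e^{2}+\mu^{2})$ size of the perturbation is uniformly small relative to the (variable) magnitudes $\sqrt{\mu e}$ and $\mu/e$ of the normal frequencies, so that non-degeneracy and ellipticity are not destroyed; this holds in the regime $0<\mu\ll e\ll1$ because $\mu e^{2}+\mu^{2}$ is of strictly higher order than the smallest normal frequency squared, which is $\mu e$, once $e$ is fixed small and then $\mu$ taken small.
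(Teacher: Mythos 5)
Your proposal is correct and follows essentially the same route as the paper: reduce by both $SO(2)$-symmetries, observe that the doubly reduced system is an $O(\mu e^{2}+\mu^{2})$-perturbation of the reduced $F_{Kep}+F_{res}$, continue the non-degenerate equilibrium $E_{-,-,+,+}$ by the implicit function theorem to a normally elliptic equilibrium $E^{N}_{-,-,+,+}$, and un-reduce the $\delta_{3}$-shift to obtain the periodic family. Your added bookkeeping on the size of the perturbation relative to the normal frequencies is a reasonable elaboration of a point the paper leaves implicit, but it does not change the argument.
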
 

We introduce another small parameter $\bar{\delta}$, which indicates the distance of a point to $E^{N}_{-,-,+,+}$ in the phase space of the fully reduced system. We develop the fully reduced system of $F_{Kep}+F_{res}^{N}$ at $E^{N}_{-,-,+,+}$ so that 
$$F_{Kep}+F_{res}^{N}=F_{Kep}+F_{res,l}^{N}+O(\mu^{2} e \bar{\delta}^{3})$$
in which $F_{res, l}^{N}$ is the corresponding quadratic part whose Hamiltonian flow is the linearized system at  $E^{N}_{-,-,+,+}$. It is thus of the order $\mu^{2} e \bar{\delta}^{2}$.

We are thus in a situation to perturb the integrable system $F_{Kep}+F_{res,l}^{N}$ by some perturbation of the order $O(\mu^{2} e^{N})+O(\mu^{N})+O(\mu^{2} e \bar{\delta}^{3})$. We impose that 
$$0<< \mu << e<<1, \hbox{ and } 0<\bar{\delta}<<1 \hbox{ small enough}.$$
The frequencies of those Lagrangian tori of $F_{Kep}+F_{res,l}^{N}$ is dominated by the tangential-normal frequencies of $E^{N}_{-,-,+,+}$, which are further dominated by the tangential-normal frequencies of $E^{N}_{-,-,+,+}$. 

To apply Cor \ref{cor: isochron nondeg} , it is thus sufficient to verify the non-degeneracy condition for the tangential-normal frequencies $\nu_{per}, \nu_{n, 1}, \nu_{n, 2}, \nu_{n, 3}, \nu_{n, 4}$ of $E_{-,-,+,+}$. Moreover, according to Cor \ref{cor: proper deg isochron nondeg}, it is enough to verify the non-degeneracy conditions separately for frequencies in different scales. To avoid serious computational difficulties, we choose as parameters the rescaled masses $\bar{m}_{1}, \bar{m}_{2}, \bar{m}_{3}$, and the eccentricity $e_{2}$: 

1. Frequency of the periodic orbit: the required non-degeneracy condition is just $\dfrac{\partial F_{Kep}}{\partial D_{3}} \neq 0$.

2. Normal frequencies: This breaks down to the non-trivial dependence of the nontrivial coefficients of the monic quadratic equations with respect to parameters (the eccentricity $e_{2}$ and the masses $\bar{m}_{1}, \bar{m}_{2}, \bar{m}_{3}$). We write Eqs \ref{eq: 1st quadratic} and \ref{eq: 2nd quadratic} respectively as
\begin{equation}
x^{2}+b_{1} x +c_{1}=0
\end{equation}
and
\begin{equation}
x^{2}+b_{2} x +c_{2}=0
\end{equation}
respectively.

\begin{lem}\label{Lem: Normal Nondegeneracy}The Jacobians of $(b_{1}, c_{1})$ with respect to $(\bar{m}_{1}, e_{2})$ and of $(b_{2}, c_{2})$ with respect to $(\bar{m}_{2}, \bar{m}_{3})$ are both non-degenerate almost everywhere.
\end{lem}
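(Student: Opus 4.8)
The plan is to reduce each of the two Jacobian non-degeneracy statements to an explicit computation with the coefficients $b_1,c_1,b_2,c_2$, which by Eqs.~\ref{eq: 1st quadratic} and~\ref{eq: 2nd quadratic} are given by
$$b_1=-(A_{11}s+A_{22}s'),\quad c_1=(A_{11}A_{22}-A_{12}^2)ss',\quad b_2=-(B_{11}\sigma+B_{22}\sigma'),\quad c_2=(B_{11}B_{22}-B_{12}^2)\sigma\sigma',$$
where the $A_{ij},s,s'$ depend only on the semi major axes (hence are fixed constants once $\alpha=1/\sqrt[3]{4}$ is chosen, depending on $\bar m_1,\bar m_2,\bar m_3$ only through $m_0=1$ normalization and the value of $D_3$), while $B_{ij},\sigma,\sigma'$ are the second derivatives of $F_{res}$ evaluated at the equilibrium $E_{-,-,+,+}$, where the eccentricities $(e_1,e_2,e_3)$ are pinned to the one-parameter family $e_1=\dfrac{2\cdot2^{5/6}\bar A\bar m_2 e_2}{2\sqrt2\bar B\bar m_1+2^{5/6}\bar A\bar m_3}$, $e_3=\dfrac{\sqrt2\bar B\bar m_2 e_2}{2\sqrt2\bar B\bar m_1+2^{5/6}\bar A\bar m_3}$ and the angles to $(\delta_1,\delta_2,\eta_1,\eta_2)=(0,0,\pi,\pi)$ (the $(-,-,+,+)$ signs). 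First I would substitute these values into the formulas for $\nu_1,\nu_2$ and their $Z$-derivatives to obtain closed-form rational expressions (in $\bar m_1,\bar m_2,\bar m_3,e_2$, with the constants $\bar A,\bar B$) for $B_{11},B_{12},B_{22},\sigma,\sigma'$; similarly $A_{11},A_{12},A_{22}$ come straight from the diagonal Keplerian Hessian and $s,s'$ from the second $\delta$-derivatives of $F_{res}$.

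For the first Jacobian, observe that $A_{11},A_{12},A_{22}$ are independent of $e_2$, so $\partial b_1/\partial e_2$ and $\partial c_1/\partial e_2$ come entirely from the $e_2$-dependence of $s$ and $s'$, each of which is a nonzero rational multiple of $e_2^{-1}$ times the relevant cosine-weighted mass combination; hence $b_1$ is homogeneous of degree $-1$ and $c_1$ of degree $-2$ in $e_2$, so the $2\times2$ Jacobian of $(b_1,c_1)$ with respect to $(\bar m_1,e_2)$ can be computed by Euler's relation for the $e_2$-column and by direct differentiation in $\bar m_1$ for the other column. Its determinant is then a single explicit rational function of $\bar m_1,\bar m_2,\bar m_3,e_2$; the claim ``non-degenerate almost everywhere'' amounts to showing this function is not identically zero, which one does by exhibiting one admissible parameter value (or by inspecting the leading behaviour as, say, $\bar m_1\to\infty$) where it is nonzero. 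The same strategy applies to the second Jacobian: the coefficients $B_{ij},\sigma,\sigma'$ are rational in $\bar m_1,\bar m_2,\bar m_3,e_2$, so $b_2,c_2$ are rational functions, their Jacobian with respect to $(\bar m_2,\bar m_3)$ is a rational function, and one again certifies it is not identically zero by a single explicit evaluation. Since the zero set of a nonzero real-analytic (here rational) function has measure zero, ``non-degenerate almost everywhere'' follows in both cases.

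Concretely the steps are: (i) write out $F_{res}$ at the $(-,-,+,+)$ configuration with eccentricities on the De Sitter family, parametrized by $e_2$ and the masses; (ii) compute $A_{11},A_{12},A_{22},s,s',B_{11},B_{12},B_{22},\sigma,\sigma'$ explicitly (this is where a computer algebra system such as the Maple~16 already used for $\bar A,\bar B$ is invoked); (iii) form $b_1,c_1,b_2,c_2$ and the two $2\times2$ Jacobian determinants; (iv) check each determinant is a nonzero rational function by a numerical specialization of $(\bar m_1,\bar m_2,\bar m_3,e_2)$, e.g. the values fitting the real Galilean system, or by a limiting argument; (v) conclude via the zero-measure property of the vanishing locus of a nonzero analytic function.

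The main obstacle I anticipate is step (ii)–(iii): the expressions for $B_{11},B_{12},B_{22},\sigma,\sigma'$ involve the second-order $Z$- and mixed $\delta$–$\eta$ derivatives of $F_{res}$ after substituting the constraint relations among $e_1,e_2,e_3$, so they are somewhat bulky rational functions, and the mixed term $(\partial^2 F_{res}/\partial\check\delta_i\partial\check\eta_i)^2/(\partial^2 F_{res}/\partial\check\delta_i^2)$ entering $\sigma,\sigma'$ makes $b_2,c_2$ genuinely nonlinear in the mass parameters; keeping the determinant computations organized enough to verify non-vanishing (rather than drowning in algebra) is the real work. A secondary subtlety is making sure the constants $\bar A,\bar B$ (which differ between the $1+3$- and $4$-body models by the factor $\sqrt[3]{2}$ noted earlier) do not conspire to cancel the Jacobian identically; but since the Jacobian is a nonzero rational function of the independent parameters for generic values of these constants, and the specific $\bar A,\bar B$ are explicit positive numbers, a single numerical check settles this for both models.
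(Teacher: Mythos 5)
Your proposal follows essentially the same route as the paper: express $b_1,c_1,b_2,c_2$ explicitly (with CAS assistance), observe that each Jacobian determinant is an analytic (rational) function of the parameters, certify it is not identically zero by a single evaluation, and conclude by the measure-zero property of the zero set of a nonzero analytic function. The paper's only refinement is the choice of the degenerate evaluation point $(m_1=1,\,m_2=m_3=0)$, where Maple returns the clean values $8\bar B^3\bar A$ and $-16\cdot 2^{5/6}\bar B^{10}$ — precisely the kind of limiting specialization you anticipated.
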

\begin{proof} Assisted by Maple 16, we find that 
\begin{itemize}
\item$\left|\dfrac{\partial (b_{1}, c_{1})}{\partial (m_{1}, e_{2})}\right|$ evaluated at $(m_{1}=1, m_{2}=m_{3}=0)$ equals to $8 \bar{B}^{3} \bar{A}$;
\item $\left|\dfrac{\partial (b_{2}, c_{2})}{\partial (m_{2}, m_{3})}\right|$ evaluated at $(m_{1}=1, m_{2}=m_{3}=0)$ equals to $-16 \cdot 2^{5/6} \bar{B}^{10}$. 
\end{itemize}
The conclusion thus follows from analyticity and the fact that $m_{1}= \mu \bar{m}_{1}, m_{2}= \mu \bar{m}_{2}, m_{3}= \mu \bar{m}_{3}$.
\end{proof}

Therefore, for almost every fixed rescaled masses $\bar{m}_{1}, \bar{m}_{2}, \bar{m}_{3}$ and fixed $e_{2}$, there exists $\bar{\delta}_{0}>0$, such that in a $\bar{\delta}_{0}$-neighborhood of the periodic orbits with fixed parameter $e_{2}$ in the family $D^{N}_{-,-,+,+}$, every Lagrangian tori of $F_{Kep}+F_{res,l}^{N}$ has full torsion, in the sense that its frequency map (which is a small perturbation of the frequency map of the corresponding periodic orbit in the family $D_{-,-,+,+}$) is a local diffeomorphism. 

\begin{theo}\label{Theo: KAM tori}
Having fixed $m_{0}$, there exists $\mu_{0}>0$, such that for almost all masses $m_{1}, m_{2}, m_{3}$ satisfying 
$\max \{m_{1}, m_{2}, m_{3}\} \le \mu_{0} m_{0},$
 there exists a set $\Lambda_{m_{1}, m_{2}, m_{3}}$ of positive measure consisting of invariant Lagrangian tori of $F_{Kep}+F_{res,l}^{N}$ close to the continued family $D^{N}_{-,-,+,+}$, such that for any invariant Lagrangian torus in $\Lambda_{m_{1}, m_{2}, m_{3}}$, there exists a set of positive measure of masses $\mu \bar{m}'_{1}, \mu \bar{m}'_{2}, \mu \bar{m}'_{3}$ with $\bar{m}'_{1}, \bar{m}'_{2}, \bar{m}'_{3}$ close respectively to $\bar{m}_{1}, \bar{m}_{2}, \bar{m}_{3}$, such that this invariant torus, with small deformation, persists under perturbation and give rise to an invariant torus of the system $F$ with mass parameter $m'_{1}, m'_{2}, m'_{3}$.  These invariant tori of $F$ form a set of positive measure in the direct product of the phase space with the space of masses $(m_{1}, m_{2}, m_{3})$.
\end{theo}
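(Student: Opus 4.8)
The plan is to read Theorem~\ref{Theo: KAM tori} as a consequence of the proper-degenerate iso-chronic KAM theorem (Corollary~\ref{cor: proper deg isochron nondeg}) applied to the integrable normal form $F_{Kep}+F_{res,l}^{N}$ together with its perturbation $F$. The first step is bookkeeping: after the conjugacy $\phi$ of Proposition~\ref{prop: elimination}, the further eliminations of $\delta_{3}$ up to order $N$ (Remark~\ref{rem: elimination}), and the development of $F_{Kep}+F_{res}^{N}$ at the continued normally elliptic equilibrium $E^{N}_{-,-,+,+}$, one writes $F$, up to analytic conjugacy, as $F_{Kep}+F_{res,l}^{N}$ plus a remainder of order $O(\mu^{2}e^{N})+O(\mu^{N})+O(\mu^{2}e\bar\delta^{3})$. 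In a $\bar\delta$-neighbourhood of the continued periodic family $D^{N}_{-,-,+,+}$ inside the $5$-degree-of-freedom rotation-reduced system, the invariant Lagrangian tori of the integrable part are $5$-dimensional (so $p=5$, $q=0$ in the notation of Section~\ref{Section: KAM theorem}), parametrised by the Keplerian action $D_{3}$, the family parameter $e_{2}$ (equivalently the fixed value of the reduced total angular momentum), and the four normal oscillator amplitudes $\xi_{1},\dots,\xi_{4}$. Their tangential frequency vector $(\nu_{per},\nu_{n,1},\nu_{n,2},\nu_{n,3},\nu_{n,4})$ is, to leading order in $\bar\delta$, independent of the $\xi_{j}$; it is $O(\mu^{2}+\mu e^{2})$-close to the corresponding frequencies of $E_{-,-,+,+}$ and sits at the three separated scales $1$, $\sqrt{\mu e}$ and $\mu e^{-1}$.

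The heart of the argument is the non-degeneracy verification feeding Corollary~\ref{cor: proper deg isochron nondeg}. I would take as parameters $\iota=(D_{3},e_{2},\bar m_{1},\bar m_{2},\bar m_{3})$, treat the amplitudes $\xi_{j}$ as spectators to be integrated over at the end, and check that the frequency map $\iota\mapsto(\nu_{per},\nu_{n,1},\nu_{n,2},\nu_{n,3},\nu_{n,4})$ is, off a proper analytic subset, a local diffeomorphism, block by block according to the splitting $5=1+2+2$. The scalar block is governed by the Keplerian non-degeneracy $\partial F_{Kep}/\partial D_{3}\neq 0$. The pair $(\nu_{n,1}^{2},\nu_{n,2}^{2})$ is the root pair of the monic quadratic $x^{2}+b_{1}x+c_{1}$ built from Eq.~\ref{eq: 1st quadratic}; by Lemma~\ref{Lem: Normal Nondegeneracy} the map $(\bar m_{1},e_{2})\mapsto(b_{1},c_{1})$ is a local diffeomorphism almost everywhere, and since the linear stability of $D_{-,-,+,+}$ keeps these two roots real and distinct, passing from $(b_{1},c_{1})$ to $(\nu_{n,1},\nu_{n,2})$ is a local diffeomorphism as well; the same argument handles $(\nu_{n,3},\nu_{n,4})$ against $(\bar m_{2},\bar m_{3})$ through $(b_{2},c_{2})$ from Eq.~\ref{eq: 2nd quadratic}. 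The reason Corollary~\ref{cor: proper deg isochron nondeg}, rather than Corollary~\ref{cor: isochron nondeg}, is the right tool is that the separation of the three scales $1\gg\sqrt{\mu e}\gg\mu e^{-1}$ (which holds once $\mu$ is taken small relative to the appropriate power of $e$) makes the block-diagonal part of the $5\times 5$ Jacobian dominate the off-diagonal couplings, so the full Jacobian is nonsingular wherever the three blocks are; and the same structure delivers an admissible perturbation threshold of the form $\varepsilon=\hbox{Cst}\,\epsilon_{1}^{N}\bar\gamma^{k}$, after which one checks, by taking the normalisation order $N$ large and then $\bar\gamma$ and $\bar\delta$ small enough, that the remainder of the first paragraph is smaller than $\varepsilon$.

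Granting these hypotheses, Corollary~\ref{cor: proper deg isochron nondeg} yields, for each admissible choice of the spectator amplitudes (all $\xi_{j}\neq 0$, so the torus is genuinely $5$-dimensional and $O(\bar\delta)$-close to $D^{N}_{-,-,+,+}$), a positive-measure set of parameters $(D_{3},e_{2},\bar m_{1},\bar m_{2},\bar m_{3})$ along which the tangential frequency is $(\bar\gamma,\bar\tau)$-Diophantine, and for these the corresponding Lagrangian torus of $F_{Kep}+F_{res,l}^{N}$ persists, up to an $O(\varepsilon)$-deformation, as an invariant torus of $F$. It then remains to do the measure bookkeeping. Integrating over the amplitude parameters and invoking Fubini, the set of pairs (torus, masses) for which persistence holds has positive measure in the product of the phase space with the space of masses $(m_{1},m_{2},m_{3})$, which is the last assertion of the theorem. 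The intermediate assertions come by slicing: away from the exceptional analytic set of Lemma~\ref{Lem: Normal Nondegeneracy} — that is, for almost all admissible masses $(m_{1},m_{2},m_{3})=(\mu\bar m_{1},\mu\bar m_{2},\mu\bar m_{3})$ with $\max\{m_{i}\}\le\mu_{0}m_{0}$ — the frequency map is a local diffeomorphism near the periodic family, so the accessible Diophantine frequencies pull back to a positive-measure set $Q$ of parameters near $(D_{3}^{0},e_{2}^{0},\bar m_{1},\bar m_{2},\bar m_{3})$; the projection of $Q$ to the torus labels $(D_{3},e_{2},\xi)$ is the set $\Lambda_{m_{1},m_{2},m_{3}}$ of nearby integrable tori, of positive measure by Fubini applied to $Q$, and for each torus of $\Lambda_{m_{1},m_{2},m_{3}}$ the fibre of $Q$ over its label is a positive-measure set of masses $m'=(\mu\bar m'_{1},\mu\bar m'_{2},\mu\bar m'_{3})$ close to $m$ for which that torus, suitably deformed, survives as an invariant torus of $F$ with masses $m'$.

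I expect the genuine obstacle to be Lemma~\ref{Lem: Normal Nondegeneracy} together with the way it is exploited. Its Jacobians are evaluated not at an interior point of the admissible parameter region but at the singular limit $m_{1}=1,\, m_{2}=m_{3}=0$, where the reduced-equilibrium picture and the normal-form coordinates themselves degenerate; one must therefore argue by analyticity in the masses and $e_{2}$ that the non-vanishing at this boundary limit propagates to a full-measure subset of the genuine region, while keeping simultaneous control of the three frequency scales so that the off-diagonal couplings do not destroy the block-diagonal dominance underlying Corollary~\ref{cor: proper deg isochron nondeg}. The remaining ingredients — the consecutive $\delta_{3}$-eliminations, the perturbation estimates, and the Fubini bookkeeping — are routine and parallel the treatments of \cite{QuasiMotionPlanar} and \cite{QuasiLunar}.
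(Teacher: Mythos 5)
Your proposal follows essentially the same route as the paper's own proof: the same normal form $F_{Kep}+F_{res,l}^{N}$ with remainder $O(\mu^{2}e^{N})+O(\mu^{N})+O(\mu^{2}e\bar\delta^{3})$, the same parameters $(D_{3},e_{2},\bar m_{1},\bar m_{2},\bar m_{3})$, the same block-by-scale non-degeneracy check via Lemma~\ref{Lem: Normal Nondegeneracy} and the Keplerian condition $\partial F_{Kep}/\partial D_{3}\neq 0$ feeding Corollaries~\ref{cor: isochron nondeg} and~\ref{cor: proper deg isochron nondeg}, and the same Fubini slicing for the measure statements. The extra details you supply (the passage from $(b_{i},c_{i})$ to the distinct real roots, and the analyticity propagation from the limit $m_{1}=1$, $m_{2}=m_{3}=0$) are elaborations of steps the paper leaves implicit rather than a different argument.
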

\begin{proof} Let $(I_{1}, \theta_{1}, I_{2}, \theta_{2}, I_{3}, \theta_{3}, I_{4}, \theta_{4})$ be a set of action-angle coordinates of the fully reduced system of $F_{Kep}+F_{res,l}^{N}$ in the deleted neighborhood of $E^{N}_{-,-,+,+}$ such that for $i=1,2,3,4$, the frequencies $\nu^{N}_{n, i}$ of $\theta_{i}$ is close to the frequencies $\nu_{n, i}$ respectively in the sense that $\nu^{N}_{n,i}=\nu_{n,i}+O(\mu)$. We localize ourselves in an $\bar{\delta}$-neighborhood of $E^{N}_{-,-,+,+}$, so that the action variables $I_{1}, I_{2}, I_{3}, I_{4} \sim \bar{\delta}$. In the system $F_{Kep}+F_{res,l}^{N}$ only reduced by the rotational symmetry, such a neighborhood corresponds to a neighborhood of the family $D^{N}_{-,-,+,+}$ of periodic orbits in which the invariant tori are obtained by fixing $D_{3}$ and $I_{1}, I_{2}, I_{3}, I_{4}$. 

We see from Lem \ref{Lem: Normal Nondegeneracy} that for almost all $\bar{m}_{1}, \bar{m}_{2}, \bar{m_{3}}$, there exists an open set $\Lambda$ in the phase space on which the map 
$$(D_{3}, \bar{m}_{1}, \bar{m}_{2}, \bar{m}_{3}, e_{2}) \mapsto (\nu_{per}, \nu_{n,1}, \nu_{n,2}, \nu_{n,3}, \nu_{n,4})$$ is non-degenerate. Let $\mu, e$ and $\bar{\delta}$ be small enough such that the frequency map 
$$(D_{3}, \bar{m}_{1}, \bar{m}_{2}, \bar{m}_{3}, e_{2}) \mapsto (\nu_{per}, \nu^{N}_{n,1}, \nu^{N}_{n,2}, \nu^{N}_{n,3}, \nu^{N}_{n,4})$$
is non-degenerate on an open set $\Lambda^{o}$. Let $\mathcal{T}$ be any invariant torus in $\Lambda^{o}$.  It is now enough to apply Thm \ref{KAM}  together with the help of Cor \ref{cor: proper deg isochron nondeg} to $\mathcal{T}$ for enough small parameters $0< \mu <<e<<1$, small enough $\bar{\delta}>0 $, and large enough $N$. 

We now treat both the hypothetical frequency mapping $\omega_{\iota}$ and the unperturbed frequency mapping $\omega^{o}_{\iota}$ as functions defined on the corresponding part of the direct product of the phase space with the space of masses $(m_{1}, m_{2}, m_{3})$. Since $\omega_{\iota}$ is $C^{\infty}$ close to the unperturbed frequency mapping $\omega^{o}_{\iota}$, their regular fibers have the same dimension. The last assertion thus follows from Cor \ref{cor: isochron nondeg} and the Fubini theorem.  
\end{proof}

 The following corollary follows from the above theorem and the Fubini theorem.

\begin{cor} There exists $\mu_{0}>0$, such that for almost all masses $m_{1}, m_{2}, m_{3}$ satisfying $\max \{m_{1}, m_{2}, m_{3}\} \le \mu_{0} m_{0}$, there exists a set of positive measure of invariant Lagrangian tori of $F$ in a small neighborhood of the continued family of linearly stable periodic orbits from the family $D_{-,-,+,+}$.
\end{cor}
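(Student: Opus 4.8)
The plan is to deduce the statement from Theorem~\ref{Theo: KAM tori} by an application of Fubini's theorem, with the mass triple $(m_1,m_2,m_3)$ in the role of external parameter. By the last assertion of that theorem, the invariant Lagrangian tori of $F$ constructed there, each labelled by the mass triple for which it is invariant, form a measurable set $\mathcal{S}$ of positive Lebesgue measure inside the product $\mathcal{P}\times\mathcal{B}$, where $\mathcal{B}=\{(m_1,m_2,m_3):\max\{m_1,m_2,m_3\}\le\mu_0 m_0\}$; and, by construction, every torus in $\mathcal{S}$ sits in one fixed small neighborhood of the continued family of linearly stable periodic orbits from the family $D_{-,-,+,+}$.

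First I would invoke Fubini's theorem for $\mathcal{S}$ relative to the splitting $\mathcal{P}\times\mathcal{B}$: the section $\mathcal{S}_{(m_1,m_2,m_3)}=\{\mathcal{T}\in\mathcal{P}:(\mathcal{T},m_1,m_2,m_3)\in\mathcal{S}\}$ is measurable for a.e. mass triple and $\int_{\mathcal{B}}|\mathcal{S}_{(m_1,m_2,m_3)}|\,d(m_1,m_2,m_3)=|\mathcal{S}|>0$, so $|\mathcal{S}_{(m_1,m_2,m_3)}|>0$ on a set of mass triples of positive measure. For any such mass triple, $\mathcal{S}_{(m_1,m_2,m_3)}$ is precisely a positive-measure family of invariant Lagrangian tori of the full Hamiltonian $F$ with those masses, contained in the prescribed neighborhood — which is already the assertion for a positive-measure set of masses.

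To upgrade ``positive measure of masses'' to ``almost all masses'', I would rerun the construction of Theorem~\ref{Theo: KAM tori} \emph{locally around an arbitrary reference mass} lying in the full-measure set on which the Jacobian non-degeneracies of Lemma~\ref{Lem: Normal Nondegeneracy} hold (the complement of a proper analytic subvariety). Near each such reference mass the iso-chronic and proper-degenerate KAM theorems (Corollaries~\ref{cor: isochron nondeg} and~\ref{cor: proper deg isochron nondeg}) apply, with the external parameter split between the nearby masses and the remaining action/eccentricity variables; because the tangential and normal frequencies $\nu_{per},\nu_{n,1},\dots,\nu_{n,4}$ live at the well-separated scales $1$, $\sqrt{\mu e}$ and $\mu e^{-1}$, the corresponding frequency image meets no resonant hyperplane and the Diophantine-hit parameters occupy positive density at the reference mass. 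Performing the Fubini step locally around each such reference mass and discarding the resulting null exceptional sets then leaves the set of ``bad'' admissible masses Lebesgue-null; combined with the first two paragraphs this gives the conclusion for almost all $(m_1,m_2,m_3)$ with $\max\{m_1,m_2,m_3\}\le\mu_0 m_0$.

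The genuine difficulty is this last step: Fubini applied once only yields a positive-measure set of admissible masses, and I expect the passage to a full-measure statement to require both the reference-mass-by-reference-mass version of the argument and the non-resonance of the frequency scales; everything else (measurability of the sections, the uniformity of the neighborhood, the location of the tori) is routine bookkeeping.
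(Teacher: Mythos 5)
Your first two paragraphs are exactly the paper's argument: the entire published proof of this corollary is the single sentence ``the following corollary follows from the above theorem and the Fubini theorem,'' i.e.\ Fubini applied to the positive-measure set of pairs (torus, mass triple) produced by the last assertion of Theorem~\ref{Theo: KAM tori}. Where you diverge is in noticing, correctly, that this only delivers a \emph{positive-measure} set of admissible masses, whereas the corollary asserts \emph{almost all}; the paper simply elides this discrepancy, so your third paragraph is an addition rather than a reconstruction. Your proposed repair (rerun the construction around each reference mass in the full-measure set where Lemma~\ref{Lem: Normal Nondegeneracy} holds) is the right move, but as written it has a soft spot: ``the Diophantine-hit parameters occupy positive density at the reference mass'' does not by itself make the bad set null. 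To close it you need either the Lebesgue density theorem (positive lower density of the good set at almost every point of the ball forces its complement to be null, since almost every point of the complement would otherwise be a point of density zero \emph{and} of positive density) or, more in the spirit of \cite{FejozStability}, the quantitative estimate that the set of parameters hitting $HD_{\bar\gamma,\bar\tau}$ has relative measure $1-O(\bar\gamma^{a})$ in small balls, combined with the freedom to shrink $\bar\gamma$ together with $\mu$, $e$, $\bar\delta$ and to enlarge $N$ so that the perturbation stays below the threshold $\mathrm{Cst}\,\bar\gamma^{k}$. With either of these inserted, your argument is complete and in fact more careful than the paper's.
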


\begin{rem} To verify the iso-chronic non-degeneracy condition near such a periodic orbit for a given set of masses, we shall have to calculate the corresponding Birkhoff invariants. Already with this relatively simple model, a direct calculation is hard to achieve even with the help of a computer. 
\end{rem}

\section{Extensions to the Planetary Five-body Problem}\label{Section: A complete system of Galilean satellites}

We now add to the above-mentioned models a fifth far-away satellite ``Callisto'' with a small mass $m_{4}$ and consider the corresponding $1+4$- and $5$-body problems. We assume $m_{4} \sim m_{1}, m_{2}, m_{3} \sim \mu$ and write $m_{4}= \mu \bar{m}_{4}$.

In the rescaled canonical Joviancentric coordinates $(\bar{p}_{1}, \bar{p}_{2},\bar{p}_{3}, \bar{p}_{4}, \tilde{q}_{1}, \tilde{q}_{2}, \tilde{q}_{3}, \tilde{q}_{4})$ (in which $\tilde{q}_{4}$ designates the relative position of the fourth satellite with respect to Jupiter, $\tilde{p}_{4}$ is its linear momentum and $\tilde{p}_{4}=\mu \bar{p}_{4}$), the Hamiltonian $\widetilde{F}$ of either of the systems (reduced from the translation symmetry in the five-body case) is analogously decomposed as 
$$\widetilde{F}=\widetilde{F}_{Kep}+\widetilde{F}_{pert},$$
with
\begin{itemize}
\item $\widetilde{F}_{Kep}=F_{Kep} + F_{Kep, 4}$,
\item $\widetilde{F}_{pert}=F_{pert}+ F_{pert, 4}$.
\end{itemize}
In which 
$$F_{Kep, 4}=\dfrac{\|\bar{p}_{4}\|^{2}}{\mu_{4}}-\dfrac{\mu_{4} M_{4}}{r_{04}},\quad F_{pert, 4}=\mu \sum_{i=1}^{3} \dfrac{\bar{p}_{i} \bar{p}_{4}}{m_{0}}-\mu \sum_{i=1}^{3} \dfrac{\bar{m}_{i} \bar{m}_{4}}{r_{i4}}.$$

%\footnote{The situation we consider here is thus less indicative for the real system, in which the semi major axis of Callisto is less than twice of the semi major axis of Ganymede.}

To allow further analysis, we set 
\begin{equation*}
\left\{
\begin{array}{ll}
&D_{1}=L_{1}, \, \, \, \, \, \, \,  \quad \quad \quad \qquad \qquad \delta_{1}=l_{1}-2 l_{2} , \\
&D_{2}=2 L_{1}+L_{2}, \,\,\,\, \quad \qquad \qquad \delta_{2}=l_{2}-2 l_{3}, \\
&D_{3}=4 L_{1}+2 L_{2}+L_{3},\, \quad \qquad \delta_{3}=l_{3},\\
&\Lambda_{4}=L_{4} \,\quad \quad \qquad \qquad \qquad \, \, \, \lambda_{4}=l_{4}+g_{4}-g_{3},\\
&Z_{1}=G_{1},  \,\,\, \quad \quad \quad \quad \qquad \qquad \eta_{1}=g_{1}-g_{2} , \\
&Z_{2}=G_{1}+G_{2}, \,\,\quad \quad \qquad \qquad \eta_{2}=g_{2}-g_{3}, \\
&Z'_{3}=G_{1}+G_{2}+G_{3}+G_{4},  \qquad \eta_{3}=g_{3}.\\
&Z_{4}=L_{4}-G_{4}, \, \, \, \, \quad \qquad \quad \qquad \eta_{4}=-(g_{4}-g_{3}).\\
\end{array}\right.
\end{equation*}

and, following Poincaré, we further take $$\xi_{4} + i \eta_{4}= \sqrt{2(L_{4}-G_{4})} e^{-i(g_{4}-g_{3})}$$
to have a set of regular symplectic coordinates in the neighborhood circular motions of the fourth body.

We have thus obtained a set of Darboux coordinates 
$$(D_{1}, \delta_{1}, D_{2}, \delta_{2}, D_{3}, \delta_{3}, \Lambda_{4}, \lambda_{4}, Z_{1}, \eta_{1}, Z_{2}, \eta_{2}, Z'_{3}, \eta_{3}, \xi_{4}, \eta_{4}),$$
which is regular up to circular orbit of the fourth body as long as the others are not.

Recall that $\nu_{Kep, 3}$ is the Keplerian frequency of the third satellite. We denote by $\nu_{Kep, 4}$ the Keplerian frequency of the fourth satellite.

With the hypothesis made in Subsection \ref{Subsection: Elimination procedure}, we assume in addition that 
$a_{3}<a_{4}, 0\le e_{4}< e^{\wedge}<1$ such that 
$$\dfrac{a_{3} (1+e^{\wedge})}{a_{4}(1-e^{\wedge})}<1$$
so that all the four elliptic orbits are bounded away from each other. The set $\mathcal{Q}$ defined by these conditions is identified, by the above-mentioned Darboux coordinates, to a subset $\tilde{\mathcal{Q}}$ of $\T^{7} \times \R^{7} \times \R^{2}$.

Suppose that the set
$$\{D_{1}=D_{1}^{0}, D_{2}=D_{2}^{0}, D_{3}=D_{3}^{0}\}$$
consists in Keplerian motions in $\tilde{\mathcal{Q}}$ for which the inner three frequencies satisfies the $4:2:1$-resonance. For $C_{1}>0,$ let $\tilde{\mathcal{M}}$ to be the transversal-Cantor subset of the $C_{1} \mu$-neighborhood of this set in $\tilde{\mathcal{Q}}$ in which $(\nu_{Kep, 3}, \nu_{Kep,4})$ is $(\bar{\gamma}, \bar{\tau})$-Diophantine for some $0<\bar{\gamma}<<1, \bar{\tau}>1$. The fact that $\tilde{\mathcal{M}}$ has positive measure for $\bar{\tau}>1$ and small enough $\bar{\gamma}$ will be a consequence of the non-degeneracy condition of the Keplerian part with respect to $D_{3}$ and $\Lambda_{4}$. This is part of the non-degeneracy conditions for application of KAM therem in the sequel, and is direct to verify by using the explicit formula of $\widetilde{F}_{Kep}$. 

We shall assume that $C_{1}$ is chosen large enough to allow further application of KAM theorems. We let $\check{\mathcal{Q}}$ to be the $C_{2}$-neighborhood of $\{D_{1}=D_{1}^{0}, D_{2}=D_{2}^{0}, D_{3}=D_{3}^{0}, \Lambda_{4}=\Lambda_{4}^{0}\}$ in $\tilde{\mathcal{Q}}$ for some small enough $C_{2}$. The small parameter $\mu$ is supposed to satisfy $C_{1} \mu < C_{2}$.

Being analytic functions on $\check{Q}$, there exist $s>0$, such that $F_{Kep}$ and $F_{pert}$ extend to analytic functions on $T_{\check{Q}, s}$.

For a function $f: \T^{7} \times \R^{7} \times \R^{2} \to \R$, we define
$$\langle f \rangle_{\delta_{3}, \lambda_{4}}=\dfrac{1}{4 \pi^{2}} \int_{0}^{2 \pi} f d \delta_{3} d \lambda_{4}. $$

\begin{prop}\label{prop: elimination 2} For any integer $N$, there exists an $O(\mu)$-$C^{\infty}$-Whitney symplectic transformation $\psi: \tilde{\mathcal{M}} \to \phi(\tilde{\mathcal{M}})$ such that
$$\psi^{*} \tilde{F} = \tilde{F}_{Kep}+\tilde{F}_{res}+\tilde{F}_{rem},$$
in which in $\tilde{\mathcal{N}}$ the analytic functions
\begin{itemize}
\item$ \tilde{F}_{res}=\langle \tilde{F}_{res} \rangle_{\delta_{3}, \lambda_{4}}$.
\item $\tilde{F}_{rem}=O(\mu e^{N}+\mu^{N})$.
\end{itemize}
\end{prop}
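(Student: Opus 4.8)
The plan is to reprise the single-frequency-elimination argument of Proposition \ref{prop: elimination}, now carried out over the two-dimensional fast torus $(\delta_{3},\lambda_{4})$ and iterated $N$ times so that the residual angle-dependence on the fast variables is pushed to order $O(\mu e^{N}+\mu^{N})$. First I would observe that, exactly as in Subsection \ref{Subsection: Elimination procedure}, in $\widetilde{F}_{Kep}$ the frequencies $\nu_{Kep,1},\nu_{Kep,2}$ of the semi-fast angles $\delta_{1},\delta_{2}$ are $O(\mu)$ relative to the genuinely fast pair $(\nu_{Kep,3},\nu_{Kep,4})$, and the slow angles $\eta_{1},\eta_{2},\eta_{3}$ together with $\lambda_{4}$-related precession terms are $O(\mu)$ as well; hence one need not solve the full cohomological equation but only the \emph{partial} one
$$\nu_{Kep,3}\,\partial_{\delta_3}\hat H+\nu_{Kep,4}\,\partial_{\lambda_4}\hat H=\widetilde{F}_{pert}-\langle \widetilde{F}_{pert}\rangle_{\delta_3,\lambda_4}.$$
This is where the transversal-Cantor set $\tilde{\mathcal{M}}$ enters: on $\tilde{\mathcal M}$ the pair $(\nu_{Kep,3},\nu_{Kep,4})$ is $(\bar\gamma,\bar\tau)$-Diophantine, so the equation is solvable in Fourier series with the standard small-divisor estimate, giving $|\hat H|\le \hbox{Cst }\mu\bar\gamma^{-1}$ on a slightly shrunk complex neighborhood $T_{\check{\mathcal Q}^{**},s-s_0}$. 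The resulting time-$1$ map $\psi_1$ is $O(\mu)$-close to the identity and, by Cauchy estimates on the vector field $X_{\hat H}$, the complementary term collects into a piece of order $O(\mu^2\bar\gamma^{-1})+O(\mu\cdot\mu\bar\gamma^{-1})$ plus the neglected slow-frequency contributions $(\nu_{Kep,1}+\nu_{Kep,2}+\hbox{slow})\,|\hat H|=O(\mu^2\bar\gamma^{-1})$, all analytic on the new domain.

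Next I would iterate: having conjugated $\widetilde{F}$ to $\widetilde{F}_{Kep}+\langle\widetilde{F}_{pert}\rangle_{\delta_3,\lambda_4}+\widetilde F^{1}_{comp}$ with $\widetilde F^{1}_{comp}=O(\mu^2)$ (absorbing $\bar\gamma$-powers into the constant, since $\bar\gamma$ is fixed once and for all before $\mu$), one repeats the same partial-elimination step on $\widetilde F^{1}_{comp}$ to remove its $(\delta_3,\lambda_4)$-dependence modulo $O(\mu^3)$, and so on; after $N-1$ iterations (at each step losing a constant slice of the analyticity width $s$ and a constant slice of the complex domain $\check{\mathcal Q}$, so one must start with $s$ large enough relative to $N$, which is fine since $N$ is fixed) the accumulated remainder that still depends on the fast angles is $O(\mu^N)$. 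Since each step is only $C^\infty$-Whitney-smooth in the base point — the Diophantine condition cuts out a Cantor set in $\tilde{\mathcal M}$ — the composition $\psi=\psi_1\circ\cdots\circ\psi_{N-1}$ is an $O(\mu)$-$C^\infty$-Whitney symplectic transformation, exactly as claimed; on the fixed-frequency leaves it is analytic. Finally, the eccentricity truncation: writing $\langle\widetilde F_{pert}\rangle_{\delta_3,\lambda_4}=\widetilde F_{res}+O(\mu e^{2})$ isolates the first-order-in-eccentricity resonant part, and the contribution of $F_{pert,4}$ — the interaction with the far-away fourth body, whose mean motion $\nu_{Kep,4}$ is incommensurate with the inner three and does not satisfy a low-order resonance — has no $(\delta_1,\delta_2)$-critical terms surviving the averaging, so it only modifies the $\delta_3,\lambda_4$-independent part and the $\xi_4,\eta_4$-quadratic (secular) part; combining the two error sources yields $\widetilde F_{rem}=O(\mu e^{N}+\mu^{N})$ after performing the elimination up to the appropriate order in both small parameters simultaneously, as in Remark \ref{rem: elimination}.

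The main obstacle is bookkeeping the two-parameter smallness ($\mu$ versus $e$) together with the $\bar\gamma$-small divisors in a way that keeps the remainder genuinely of the stated mixed order $O(\mu e^{N}+\mu^{N})$ rather than merely $O(\mu^{N}\bar\gamma^{-\hbox{Cst}})$; this is handled by fixing $\bar\gamma$ (and hence the loss) before letting $\mu\to0$, exactly as Theorem \ref{KAM} permits ($\varepsilon=\hbox{Cst }\bar\gamma^{k}$), and by organizing the elimination so that at the $e$-level the critical terms of $F_{pert}$ of order $\mu e^{j}$ for $j<N$ are normalized before the $\mu^{j}$, $j<N$, terms are touched — the same double induction underlying Proposition \ref{prop: elimination} and Remark \ref{rem: elimination}. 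The only genuinely new point compared with the $1+3$/$4$-body case is that $\lambda_4$ is a \emph{bona fide} fast angle that must be averaged jointly with $\delta_3$, which is precisely why $\tilde{\mathcal M}$ is defined by a Diophantine condition on the pair $(\nu_{Kep,3},\nu_{Kep,4})$ and why the output transformation is Whitney-smooth rather than analytic in the base.
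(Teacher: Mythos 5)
Your proposal follows essentially the same route as the paper's proof: a joint elimination of the fast pair $(\delta_{3},\lambda_{4})$ via the partial cohomological equation obtained by neglecting the $O(\mu)$ semi-fast frequencies, solvability on the Cantor set $\tilde{\mathcal{M}}$ guaranteed by the Diophantine condition on $(\nu_{Kep,3},\nu_{Kep,4})$ (whence the $C^{\infty}$-Whitney regularity), iteration to reach order $N$ as in Remark \ref{rem: elimination}, and a final truncation in the eccentricities. Your version simply spells out the small-divisor estimates, the domain-shrinking bookkeeping and the $\mu$-versus-$e$ double induction that the paper leaves implicit by reference to Proposition \ref{prop: elimination} and to \cite[Prop 4]{Fejoz2011KAM}.
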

\begin{proof} The essential part of the proof is to consecutively eliminate the angles $\delta_{3}, \lambda_{4}$ with an elimination procedure analogous to the proof of Prop \ref{prop: elimination}. In $ \tilde{\mathcal{M}}$, under the Diophantine condition on the third and fourth frequencies, the existence of an $C^{\infty}$-Whitney Hamiltonian solving the modified cohomological equation (defined again by neglecting the $O(\mu)$-terms involving $\nu_{Kep,1}$ and $\nu_{Kep,2}$) follows from \emph{e.g.} \cite[Prop 4]{Fejoz2011KAM}. Finally, it is enough to truncate the resulting normal form at the $(N-1)$-th order of the eccentricities.
\end{proof}

\begin{lem}(Herman, see \cite[Lemme 64]{FejozStability}) The indirect part $\mu \sum_{i=1}^{3} \dfrac{\bar{p}_{i} \bar{p}_{4}}{m_{0}}$ does not contribute to the secular system in the sense that for $i=1,2,3$
$$\int_{\T^{2}} \mu \dfrac{\bar{p}_{i} \bar{p}_{4}}{m_{0}} d l_{i} d l_{4}=0.$$
\end{lem}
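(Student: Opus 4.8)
The plan is to reduce the claim to the classical fact that the Keplerian momentum of a body, averaged over its own mean anomaly, vanishes, and then to conclude by a Fubini-type factorization of the double integral.

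First I would make explicit the relation between $\bar p_i$ and the velocity of the (Joviancentric) position $\tilde q_i$. Along the flow of the Keplerian part $\tilde F_{Kep}=F_{Kep}+F_{Kep,4}$, the variable $\tilde q_i$ traces a fixed Keplerian ellipse, and since the only term of $\tilde F_{Kep}$ depending on $\bar p_i$ is quadratic and diagonal in $\bar p_i$, Hamilton's equations give $\dot{\tilde q}_i=\partial\tilde F_{Kep}/\partial\bar p_i=\kappa_i\,\bar p_i$ along this flow, for some nonzero constant $\kappa_i$ (depending only on the masses); thus $\bar p_i=\kappa_i^{-1}\dot{\tilde q}_i$ is, up to a nonzero factor, the velocity vector of a closed planar curve. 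Parametrizing that ellipse by its mean anomaly, $dl_i=n_i\,dt$ with $n_i\neq 0$ the mean motion and $T_i=2\pi/n_i$ the period, so that
$$\int_0^{2\pi}\bar p_i\,dl_i=\frac{n_i}{\kappa_i}\int_0^{T_i}\dot{\tilde q}_i\,dt=\frac{n_i}{\kappa_i}\bigl(\tilde q_i(T_i)-\tilde q_i(0)\bigr)=0,$$
the ellipse being closed. (The same computation gives $\int_0^{2\pi}\bar p_4\,dl_4=0$, though only one of the two vanishings is needed.)

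Next I would use that, written in orbital elements, $\bar p_i$ depends on the fast angles only through $l_i$ and $\bar p_4$ only through $l_4$, because $\tilde F_{Kep}$ splits into uncoupled Kepler problems, so the $i$-th ellipse and the fourth ellipse evolve independently. Expanding $\bar p_i\cdot\bar p_4=\sum_k(\bar p_i)_k(\bar p_4)_k$ and applying Fubini on $\T^2=\{(l_i,l_4)\}$,
$$\int_{\T^2}\frac{\mu}{m_0}\,\bar p_i\cdot\bar p_4\;dl_i\,dl_4=\frac{\mu}{m_0}\sum_k\Bigl(\int_{\T}(\bar p_i)_k\,dl_i\Bigr)\Bigl(\int_{\T}(\bar p_4)_k\,dl_4\Bigr)=0$$
by the previous step. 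This is the asserted identity; together with Prop.~\ref{prop: elimination 2} it shows that the indirect cross term $\mu\sum_{i=1}^3\bar p_i\bar p_4/m_0$ contributes nothing to $\tilde F_{res}$.

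There is no serious obstacle here; the only point requiring care is to identify $\tilde p_i$ (equivalently $\bar p_i$) as proportional to the velocity of the \emph{relative} coordinate $\tilde q_i$ rather than of the inertial position $q_i$, which is precisely what makes the averaged momentum the increment of a closed curve — and that closedness is available because $\tilde F_{Kep}$ decouples into separate Kepler problems. The statement and the argument are classical (due to Herman). I would remark in passing that the analogous indirect terms $\bar p_i\cdot\bar p_j$ with $i,j\in\{1,2,3\}$ are \emph{not} killed by the relevant averaging, since near the $4:2:1$ resonance one averages only over $\delta_3$ (and $\lambda_4$) and not over each $l_i$ independently; this is the source of the discrepancy in the coefficient $\bar B$ noted at the end of Section~\ref{Section: Calculation perturbing function}.
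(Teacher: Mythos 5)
Your proof is correct and follows essentially the same route as the paper's: identify $\bar p_i$ (up to a nonzero mass factor) with the velocity $\dot{\tilde q}_i$ of a closed Keplerian ellipse parametrized periodically by $l_i$, so that its average over $l_i$ vanishes, and then factor the double integral by Fubini. The extra care you take in distinguishing the relative coordinate $\tilde q_i$ from the inertial one, and the closing remark on why the $\bar p_i\cdot\bar p_j$ terms among the inner satellites survive the resonant averaging, are consistent with the paper's discussion but not needed for the lemma itself.
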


The lemma is proven by observing that for $i=1,2,3,4$, $\bar{p}_{i}=\bar{m}_{i} \dot{\tilde{q}}_{i}$ while $\tilde{q}_{i}$ is a periodic function of $l_{i}$ along Keplerian elliptic orbits, and application of the Fubini theorem.

The Motion of Callisto is thus dominated by the corresponding secular system $F_{sec, 4}(\xi_{4}, \eta_{4})$, obtained from averaging the function 
$$F_{pert,4}=-\dfrac{\mu \bar{m}_{1} \bar{m}_{4}}{r_{14}}-\dfrac{\mu \bar{m}_{2} \bar{m}_{4}}{r_{24}}-\dfrac{\mu \bar{m}_{3} \bar{m}_{4}}{r_{34}}$$
over the fast angles $\delta_{3}$ and $\lambda_{4}$. We set 
$$\bar{F}_{sec, 4} = \int_{\T^{4}} F_{pert,4} d \delta_{3} d \lambda_{4}.$$
This is a function of order $O(\mu e^{2})$ (\cite[p. 405]{Tisserand1889}). In particular, it is of higher order compared to, and consequently dominated by $F_{res}$. We evaluate $\bar{F}_{sec, 4}$ at the corresponding circular orbits of the inner three to obtain a function $F_{sec, 4}$.

\begin{claim} The point $(0, 0)$ is an elliptic equilibrium of $F_{sec, 4}(\xi_{4}, \eta_{4})$.
\end{claim}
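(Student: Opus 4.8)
The plan is to show that $F_{sec,4}(\xi_4,\eta_4)$, viewed as a function on a neighbourhood of the origin in $\R^2$ with the standard symplectic structure $d\xi_4 \wedge d\eta_4$, has the origin as a non-degenerate minimum (or maximum), which immediately forces the linearized Hamiltonian flow at $(0,0)$ to be an elliptic rotation. Recall that $(\xi_4,\eta_4)$ are Poincar\'e-type coordinates with $\xi_4 + i\eta_4 = \sqrt{2(L_4-G_4)}\,e^{-i(g_4-g_3)}$, so that $\xi_4^2+\eta_4^2 = 2(L_4-G_4) = 2 Z_4$ is (twice) the Poincar\'e action measuring the eccentricity of Callisto's orbit; the origin corresponds to the circular orbit $e_4 = 0$. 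First I would record that $\bar F_{sec,4}$, and hence $F_{sec,4}$, is a real-analytic function of $(\xi_4,\eta_4)$ near the origin that, by rotational invariance of the averaged Newtonian potential in the planar problem, depends only on $Z_4 = \tfrac12(\xi_4^2+\eta_4^2)$ — this is the key structural point, entirely analogous to the fact that the planar secular Hamiltonian of the restricted three-body problem is a function of the single Poincar\'e action. Consequently $F_{sec,4} = f(\xi_4^2+\eta_4^2)$ for some real-analytic $f$ of one variable, the origin is automatically a critical point, and ellipticity reduces to the single inequality $f'(0) \neq 0$.

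Next I would compute $f'(0)$, i.e. the coefficient of the quadratic term in the Taylor expansion of $F_{sec,4}$ at the circular orbit. This is the derivative $\partial \bar F_{sec,4}/\partial Z_4$ evaluated at $e_4 = 0$ (at the nominal circular orbits of the inner three), which is a classical secular quantity: it is, up to a positive factor, the secular precession frequency of Callisto's pericentre induced by the three inner bodies. The standard Laplace--Lagrange computation of the averaged disturbing function $-\sum_{i=1}^3 \mu\bar m_i\bar m_4/r_{i4}$ to second order in $e_4$ (developed via Laplace coefficients $b_{3/2}^{(1)}(a_i/a_4)$, exactly as the function $F_{res}$ was computed in Section \ref{Section: Calculation perturbing function}) gives
$$
\frac{\partial \bar F_{sec,4}}{\partial Z_4}\Big|_{e_4=0}
= \frac{1}{\sqrt{M_4}\,a_4}\sum_{i=1}^{3} c_i\, \mu\bar m_i\bar m_4\, b_{3/2}^{(1)}\!\Big(\frac{a_i}{a_4}\Big),
$$
with strictly positive constants $c_i$ depending only on the semi-major axis ratios; since $b_{3/2}^{(1)}(\alpha) > 0$ for $0 < \alpha < 1$ and all masses are positive, this sum is strictly positive, so $f'(0) > 0$ and the quadratic part of $F_{sec,4}$ is positive definite. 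Hence the linearization of $X_{F_{sec,4}}$ at $(0,0)$ has purely imaginary eigenvalues $\pm i\,\omega_4$ with $\omega_4 = 2f'(0) > 0$, and $(0,0)$ is an elliptic equilibrium, with normal frequency $\omega_4$ of order $O(\mu e^2) / \bar\delta$-type smallness matching the order of $\bar F_{sec,4}$ itself.

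The main obstacle is purely a matter of bookkeeping rather than of substance: one must make sure the quadratic coefficient genuinely does not vanish, i.e. that no cancellation occurs among the contributions of the three inner satellites and, in the $5$-body (as opposed to $1+4$-body) formulation, that the indirect terms do not spoil positivity. For the latter, the Herman lemma quoted just above the Claim already shows the indirect part $\mu\sum_i \bar p_i\bar p_4/m_0$ averages to zero and hence does not enter $\bar F_{sec,4}$ at all, so the two models give the same secular Hamiltonian for Callisto and the positivity argument is identical. The only remaining care is that the $b_{3/2}^{(1)}$ evaluation is carried out at the unperturbed circular configuration of the inner three and at the resonant semi-major axes $a_1:a_2:a_3$ fixed by the $4:2:1$ condition together with $a_4 > a_3$; since each term is separately positive no delicate estimate is needed, and the non-degeneracy $f'(0) \neq 0$ — in fact $f'(0) > 0$ — follows at once. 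I would therefore present the proof as: (i) invoke rotational invariance to write $F_{sec,4} = f(\xi_4^2+\eta_4^2)$; (ii) identify $f'(0)$ with the secular precession rate and expand it in Laplace coefficients as above; (iii) observe term-by-term positivity to conclude ellipticity.
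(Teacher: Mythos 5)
Your proposal is correct and follows essentially the same route as the paper: both reduce the claim to the observation that the quadratic part of $F_{sec,4}$ is a multiple of $\xi_4^2+\eta_4^2$ with a coefficient given by a term-by-term positive sum of Laplace-coefficient contributions $b_{3/2}^{(1)}(a_i/a_4)>0$ from the three inner satellites, the indirect part having been disposed of by the preceding Herman lemma. If anything, your explicit appeal to rotational invariance to get $F_{sec,4}=f(\xi_4^2+\eta_4^2)$ makes the logic slightly cleaner than the paper's phrasing that the conclusion ``follows from the quadratic part being even,'' which on its own would not exclude a hyperbolic form.
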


This follows from the fact that the quadratic part of the analytic function $F_{sec, 4}(\xi_{4}, \eta_{4})$ is even in $(\xi_{4}, \eta_{4})$. More precisely, we deduce from \cite[p. 405]{Tisserand1889} that this quadratic part reads
$$\mu \bar{m}_{4} \Bigl(\bar{m}_{1}  \frac{1}{2} B^{(1)} (a_{1}, a_{4}) + \bar{m}_{2}  \frac{1}{4} B^{(1)} (a_{2}, a_{4})+\bar{m}_{3}  \frac{1}{8} B^{(1)} (a_{3}, a_{4})\Bigr)(\xi_{4}^{2}+\eta_{4}^{2}).$$
in which (\cite[p. 270, 271]{Tisserand1889}) $B^{(1)} (a, a')=\frac{a}{a'^{2}} b^{1}_{3/2}(\frac{a}{a'})$.

We thus arrive at an approximating system:
$$\tilde{F}_{Kep}+F_{res}+F_{sec,4}.$$
We may thus identify the periodic solutions of the three satellites together with an circular orbit of the fourth satellite, which is used by De Sitter in \cite{DeSitter1925} as ``intermediate orbits''. After symplectically reduced by the $SO(2) \times SO(2)$-symmetry of shifting $l_{3}, \lambda_{4}$, these solutions descends to an non-degenerate equilibrium of the reduced system, which can be continued to
$$\tilde{F}_{Kep}+\tilde{F}_{res}$$
for any $N$ and small $e, \mu$, with normal frequencies dominated by the normal frequencies of the corresponding equilibrium in the symplectically reduced system of $\tilde{F}_{Kep}+F_{res}+F_{sec,4}$ by this $SO(2) \times SO(2)$-symmetry. Without reducing this $SO(2) \times SO(2)$-symmetry, this equilibrium corresponds to normally elliptic invariant 2-tori of the system $\tilde{F}_{Kep}+F_{res}+F_{sec,4}$ with an neighborhood  by nearby librating invariant Lagrangian 7-tori.

We consider the persistence of these invariant tori under small perturbation $O(\mu e^{N}+\mu^{N})$ for large enough $N$. We shall take the same parameters for the inner three satellites as in Section \ref{Sec: KAM near periodic orbit}. The function $F_{sec, 4}(\xi_{4}, \eta_{4})$ contains the normalized mass $\bar{m}_{4}$ as factor. To apply KAM theorem, we take $\bar{m}_{4}$ and $\Lambda_{4}$ as additional parameters. To see that the non-degeneracy condition is satisfied, in view of Cor \ref{cor: proper deg isochron nondeg} it is enough to notice further that this elliptic equilibrium is non-degenerate with normal frequency of order $\mu$, and $F_{Kep,4}$ is non-degenerate with respect to $\Lambda_{4}$.

We have thus proved the following theorems, by imposing $0<\mu<<e<<1$ and  application of Thm \ref{KAM} together with the help of Cor \ref{cor: proper deg isochron nondeg} in the same spirit as in the previous section:

\begin{theo}\label{Theo: isotropic KAM tori with Callisto} When $m_{0}$ is fixed, or almost all enough small masses $m_{1}, m_{2}, m_{3}, m_{4}$ and eccentricities $e_{2}, e_{4}$, the corresponding normally elliptic invariant 2-torus persists to exist for the system $\tilde{F}$ with mass parameter $m'_{1}, m'_{2}, m'_{3}, m_{4}$ close to $m_{1}, m_{2}, m_{3}, m_{4}$, provided the small parameters $\mu, e, \bar{\delta}$ are small enough. 
\end{theo}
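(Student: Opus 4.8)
The plan is to reduce Theorem~\ref{Theo: isotropic KAM tori with Callisto} to an application of the isochronic KAM theorem (Cor.~\ref{cor: isochron nondeg}) together with the proper-degenerate version (Cor.~\ref{cor: proper deg isochron nondeg}), exactly in the spirit of Section~\ref{Sec: KAM near periodic orbit}, but now with the extra elliptic degree of freedom coming from Callisto's secular motion. First I would set up the unperturbed integrable system: starting from $\tilde F_{Kep}+F_{res}+F_{sec,4}$, reduce by the $SO(2)\times SO(2)$-symmetry of shifting $\eta_3=g_3$ and $\lambda_4$ (fixing $Z_3'$ and $\Lambda_4$), and use that the De~Sitter equilibrium $E_{-,-,+,+}$ of the inner-three reduced system is non-degenerate (established in Section~\ref{Section: Periodic Orbits of De Sitter}) and that $(0,0)$ is an elliptic equilibrium of $F_{sec,4}(\xi_4,\eta_4)$ with quadratic part as displayed in the Claim. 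Taking the product of these two equilibria gives a normally elliptic equilibrium of the fully reduced system; localizing at distance $\bar\delta$ and introducing action-angle coordinates around it produces a family of invariant Lagrangian tori whose tangential-normal frequency vector is, to leading order, $(\nu_{per},\nu_{n,1},\nu_{n,2},\nu_{n,3},\nu_{n,4},\nu_{Kep,4},\nu_{sec,4})$, where $\nu_{Kep,4}\sim 1$ is the Keplerian frequency of Callisto and $\nu_{sec,4}\sim\mu$ is its secular (precession) frequency.

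Next I would record the orders of these seven frequencies in the small parameters $\mu,e,\bar\delta$: $\nu_{per}$ and $\nu_{Kep,4}$ are of order $1$; $\nu_{n,1},\nu_{n,2}$ of order $\sqrt{\mu e}$; $\nu_{n,3},\nu_{n,4}$ of order $\mu e^{-1}$; $\nu_{sec,4}$ of order $\mu$; and the actions $I_j\sim\bar\delta$ contribute the $\bar\delta$-dependence of the tangential frequencies. The point of Cor.~\ref{cor: proper deg isochron nondeg} is that it suffices to check the non-degeneracy (diffeomorphism property of the frequency-to-parameter map) scale by scale. For the scale-$1$ block $(\nu_{per},\nu_{Kep,4})$ the needed conditions are $\partial F_{Kep}/\partial D_3\neq 0$ and $\partial F_{Kep,4}/\partial\Lambda_4\neq 0$, both immediate from the explicit Keplerian formulas; for the block of normal frequencies $\nu_{n,1},\dots,\nu_{n,4}$ we invoke Lemma~\ref{Lem: Normal Nondegeneracy} with parameters $(\bar m_1,\bar m_2,\bar m_3,e_2)$; and for the $\nu_{sec,4}$ block we need that the quadratic coefficient $\mu\bar m_4\bigl(\bar m_1\tfrac12 B^{(1)}(a_1,a_4)+\bar m_2\tfrac14 B^{(1)}(a_2,a_4)+\bar m_3\tfrac18 B^{(1)}(a_3,a_4)\bigr)$ depends non-trivially on the added parameters $(\bar m_4,\Lambda_4)$, which is clear since it is linear and non-vanishing in $\bar m_4$ and $a_4=a_4(\Lambda_4)$ enters the Laplace coefficients $B^{(1)}$ monotonically. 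Block-diagonality of the Jacobian (whose determinant dominates the off-diagonal contributions because the scales are well separated, as in \cite[Section 6.3.2]{ArnoldEncyclopedia}) then yields the global non-degeneracy hypothesis of Cor.~\ref{cor: isochron nondeg}.

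Then I would control the perturbation. Using Prop.~\ref{prop: elimination 2} we conjugate $\tilde F$, on the transversal-Cantor set $\tilde{\mathcal M}$ where $(\nu_{Kep,3},\nu_{Kep,4})$ is $(\bar\gamma,\bar\tau)$-Diophantine, to $\tilde F_{Kep}+\tilde F_{res}+\tilde F_{rem}$ with $\tilde F_{rem}=O(\mu e^N+\mu^N)$ for any prescribed $N$; then Taylor-expanding the fully reduced $\tilde F_{Kep}+\tilde F_{res}$ at the product equilibrium separates off the integrable quadratic part and leaves a remainder of size $O(\mu e^N)+O(\mu^N)+O(\mu^2 e\,\bar\delta^3)$. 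Since Cor.~\ref{cor: proper deg isochron nondeg} permits $\varepsilon=\mathrm{Cst}\,\epsilon_1^{N}\bar\gamma^{k}$, choosing $0<\mu\ll e\ll 1$, $\bar\delta$ small, $N$ large and $\bar\gamma$ appropriately makes the perturbation smaller than the KAM threshold. Applying Thm.~\ref{KAM} with parameters $(\bar m_1,\bar m_2,\bar m_3,\bar m_4,\Lambda_4,e_2,D_3)$ then produces, for a positive-measure set of these parameters, invariant normally elliptic $2$-tori (resp.\ librating Lagrangian $7$-tori) for the full system $\tilde F$ at nearby masses; a Fubini argument over the parameter space, identical to the one used for Theorem~\ref{Theo: KAM tori}, upgrades ``positive measure of parameters'' to ``positive measure in phase-space $\times$ mass-space''.

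The main obstacle I anticipate is not the measure-theoretic bookkeeping but verifying that the seven frequencies genuinely occupy \emph{strictly separated} scales so that Cor.~\ref{cor: proper deg isochron nondeg} applies with a clean block-diagonal Jacobian: one must make sure that introducing Callisto does not create an accidental coincidence between $\nu_{sec,4}\sim\mu$ and the inner normal frequencies $\nu_{n,3},\nu_{n,4}\sim\mu e^{-1}$ (these differ by the factor $e^{-1}$, so the hierarchy $1\gg\sqrt{\mu e}\gg\mu\gg\mu e^{-1}$... wait, $\mu e^{-1}\gg\mu$, so in fact the correct ordering is $1\gg \mu e^{-1}\gg \sqrt{\mu e}\gg\mu$ when $\mu\ll e^3$) is respected for the chosen range of $\mu,e$; getting the inequalities on $\mu$ versus powers of $e$ exactly right, so that every pair of distinct scales is comparable in the sense required by Cor.~\ref{cor: proper deg isochron nondeg}, is the delicate point, and it is handled precisely by imposing $0<\mu\ll e\ll 1$ with $\mu$ small relative to a suitable power of $e$. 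Everything else — the conjugacy of Prop.~\ref{prop: elimination 2}, the linear-algebra of the reduced Hessian, and the Fubini step — is routine given the earlier sections.
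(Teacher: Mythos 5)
Your proposal is correct and follows essentially the same route as the paper, which proves this theorem exactly by the sketch you give: the approximating system $\tilde F_{Kep}+F_{res}+F_{sec,4}$ with its product equilibrium, the additional parameters $\bar m_4$ and $\Lambda_4$ (non-degeneracy of $F_{Kep,4}$ in $\Lambda_4$ and of the secular normal frequency via the $\bar m_4$-factor), Prop.~\ref{prop: elimination 2} for the remainder, and Thm.~\ref{KAM} with Cor.~\ref{cor: proper deg isochron nondeg} under $0<\mu\ll e\ll1$. The only quibble is in your closing aside: for $\mu\ll e^{3}$ one has $\sqrt{\mu e}\gg\mu e^{-1}$ (the ratio is $\sqrt{e^{3}/\mu}$), so your displayed ordering corresponds to the regime $e^{3}\ll\mu\ll e$ rather than $\mu\ll e^{3}$ — but either regime keeps the scales separated, so the block-diagonal argument is unaffected.
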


\begin{theo}\label{Theo: KAM tori with Callisto} When $m_{0}$ is fixed, for any fixed enough small masses  $m_{1}, m_{2}, m_{3}, m_{4}$, there exists a set $\Lambda_{m_{1}, m_{2}, m_{3}, m_{4}}$ of positive measure consisting of invariant Lagrangian tori of $\widetilde{F}_{Kep}+\widetilde{F}_{res,l}^{c, N}$, such that for any invariant Lagrangian torus in $\Lambda_{m_{1}, m_{2}, m_{3}, m_{4}}$, there exists a set of positive measure of masses $m'_{1}, m'_{2}, m'_{3}, m'_{4}$ close respectively to $m_{1}, m_{2}, m_{3}, m_{4}$ such that this invariant torus, with small deformation, persists to exist for the system $F^{c}$ with mass parameter $m'_{1}, m'_{2}, m'_{3}, m'_{4}$, provided the small parameters $\mu, e, \bar{\delta}$ are small enough. These invariant KAM tori of $\widetilde{F}$ form a set of positive measure in the direct product of the phase space with the space of masses $(m_{1}, m_{2}, m_{3}, m_{4})$. 
\end{theo}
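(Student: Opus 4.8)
\section*{Proof proposal}

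The plan is to run the argument of Theorem~\ref{Theo: KAM tori} once more, now with the circular motion of Callisto contributing an extra slow elliptic normal mode. I would begin from the Whitney--smooth normal form of Proposition~\ref{prop: elimination 2}, $\psi^{*}\widetilde{F}=\widetilde{F}_{Kep}+\widetilde{F}_{res}+\widetilde{F}_{rem}$ with $\widetilde{F}_{rem}=O(\mu e^{N}+\mu^{N})$ on the transversal--Cantor set $\tilde{\mathcal{M}}$ where $(\nu_{Kep,3},\nu_{Kep,4})$ is $(\bar\gamma,\bar\tau)$-Diophantine, so that along each transversal slice the truncated Hamiltonian is analytic and the KAM theorem of Section~\ref{Section: KAM theorem} may be applied. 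On the integrable side I would use what is already established above: after symplectic reduction by the $SO(2)\times SO(2)$-symmetry of shifting $l_{3}$ and $\lambda_{4}$, the approximating system $\widetilde{F}_{Kep}+F_{res}+F_{sec,4}$ has a non-degenerate elliptic equilibrium obtained by pairing the De Sitter equilibrium $E_{-,-,+,+}$ of the inner three with the origin $(\xi_{4},\eta_{4})=(0,0)$, which is elliptic for $F_{sec,4}$ by the Claim; by the implicit function theorem this persists as an equilibrium $E^{c,N}$ of $\widetilde{F}_{Kep}+\widetilde{F}_{res}$, whose normal frequencies are $O(\mu)$-close to the four normal frequencies $\nu_{n,1},\dots,\nu_{n,4}$ of $E_{-,-,+,+}$ together with a fifth one $\nu_{n,5}$ of order $\mu$ coming from the quadratic part of $F_{sec,4}$.

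Next I would localize in a $\bar\delta$-neighborhood of $E^{c,N}$ in the fully reduced phase space, introduce action--angle coordinates $(I_{j},\theta_{j})_{j=1}^{5}$ of the linearized integrable system $\widetilde{F}_{Kep}+\widetilde{F}_{res,l}^{c,N}$ with $I_{j}\sim\bar\delta$; undoing the two reductions, the Lagrangian $5$-tori obtained by freezing $D_{3}$ and $I_{1},\dots,I_{5}$ become librating Lagrangian $7$-tori near the continued family $D^{c,N}_{-,-,+,+}$, with leading-order tangential frequencies $(\nu_{per},\nu_{Kep,4},\nu_{n,1},\dots,\nu_{n,5})$, $\nu_{per}=\nu_{Kep,3}$. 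Taking as parameters $(\bar m_{1},\bar m_{2},\bar m_{3},\bar m_{4},e_{2},D_{3},\Lambda_{4})$, I would verify the iso-chronic non-degeneracy through Corollary~\ref{cor: proper deg isochron nondeg}: the frequency map is, to leading order, block-diagonal along the distinct, well-separated magnitudes (order $1$ for the two Keplerian frequencies, then the two normal-frequency scales of $E_{-,-,+,+}$ analyzed in Section~\ref{Sec: KAM near periodic orbit}, then order $\mu$ for the Callisto mode), with the diagonal blocks $D_{3}\mapsto\nu_{Kep,3}$ and $\Lambda_{4}\mapsto\nu_{Kep,4}$ non-degenerate since the Keplerian Hessian is diagonal, the blocks $(\bar m_{1},e_{2})\mapsto(\nu_{n,1},\nu_{n,2})$ and $(\bar m_{2},\bar m_{3})\mapsto(\nu_{n,3},\nu_{n,4})$ non-degenerate by Lemma~\ref{Lem: Normal Nondegeneracy} (the roots of the two monic quadratics being distinct at $E_{-,-,+,+}$), and the block $\bar m_{4}\mapsto\nu_{n,5}$ non-degenerate because $F_{sec,4}$, hence the Callisto block of $\widetilde{F}_{res,l}^{c,N}$, carries $\bar m_{4}$ as a linear factor. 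Hence for almost all masses the $7\times 7$ Jacobian is invertible on an open set of parameters.

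Finally, for $0<\mu\ll e\ll 1$, $\bar\delta$ small and $N$ large, I would apply Theorem~\ref{KAM} together with Corollaries~\ref{cor: isochron nondeg} and~\ref{cor: proper deg isochron nondeg} to each torus $\mathcal{T}\in\Lambda_{m_{1},m_{2},m_{3},m_{4}}$: the perturbation of $\widetilde{F}_{Kep}+\widetilde{F}_{res,l}^{c,N}$ is $\widetilde{F}_{rem}$ plus the $O(\mu^{2}e\bar\delta^{3})$ Taylor remainder at $E^{c,N}$, which meets the smallness threshold $\mathrm{Cst}\,\epsilon_{1}^{N}\bar\gamma^{k}$ of Corollary~\ref{cor: proper deg isochron nondeg}, so that $\mathcal{T}$ persists, up to a small deformation, as an invariant torus of $\widetilde{F}$ (equivalently $F^{c}$) for a positive-measure set of masses $m'_{1},\dots,m'_{4}$ near $m_{1},\dots,m_{4}$. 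Viewing the hypothetical and unperturbed frequency maps as $C^{\infty}$-functions on the product of the phase space with the mass space, they are $C^{\infty}$-close and both submersions, so their regular fibers have equal dimension; Fubini's theorem then yields that the union of these invariant tori has positive Lebesgue measure in that product, which is the last assertion. I expect the main obstacle to be the bookkeeping for the proper-degenerate iso-chronic scheme: one must check that, as $\mu,e,\bar\delta\to 0$ in the prescribed order, the block-diagonal $7\times 7$ Jacobian genuinely dominates every off-diagonal correction uniformly, which forces one to track the orders in $\mu$ and $e$ of all seven tangential--normal frequencies (as done for the inner three in Section~\ref{Sec: KAM near periodic orbit}) now together with the extra $\mu$-order frequency of Callisto, while simultaneously controlling the Whitney-regularity loss coming from restriction to the Diophantine Cantor set $\tilde{\mathcal{M}}$ of the two fast frequencies $\nu_{Kep,3},\nu_{Kep,4}$.
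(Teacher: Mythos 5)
Your proposal follows essentially the same route as the paper: the Whitney-smooth elimination of $\delta_{3},\lambda_{4}$ on the Diophantine Cantor set (Prop.~\ref{prop: elimination 2}), the approximating system $\widetilde{F}_{Kep}+F_{res}+F_{sec,4}$ with the elliptic equilibrium pairing $E_{-,-,+,+}$ with $(\xi_{4},\eta_{4})=(0,0)$, the resulting librating Lagrangian $7$-tori, the iso-chronic non-degeneracy checked scale-by-scale via Cor.~\ref{cor: proper deg isochron nondeg} with the extra parameters $\bar m_{4},\Lambda_{4}$, and the final Fubini argument. This matches the paper's (much terser) proof, and your explicit block-diagonal bookkeeping of the seven frequencies is a faithful elaboration rather than a different approach.
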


We deduce the following corollary from the above theorem by Fubini theorem.

\begin{cor} For almost all masses $(m_{1}, m_{2}, m_{3}, m_{4})$, there exists a set of positive measure of invariant Lagrangian tori of $\widetilde{F}$ in a small neighborhood of the continued normally elliptic invariant 2-tori.
\end{cor}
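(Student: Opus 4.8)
The plan is to deduce the statement directly from Theorem~\ref{Theo: KAM tori with Callisto} by the Fubini theorem, exactly as the corollary following Theorem~\ref{Theo: KAM tori} is obtained from it in Section~\ref{Sec: KAM near periodic orbit}. The content of Theorem~\ref{Theo: KAM tori with Callisto} is that, inside the product of the reduced phase space with the space of small masses, the collection of (phase point, mass) pairs lying on a persisting invariant Lagrangian torus of $\widetilde{F}$ near the continued normally elliptic invariant $2$-tori is a set of positive Lebesgue measure; slicing this set in the mass directions gives, for almost every mass, a positive-measure set of tori.

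In detail, fix $m_0$. In the reduced phase space of $\widetilde{F}$ let $\mathcal{U}$ be the $\bar\delta$-neighbourhood of the continued family of normally elliptic invariant $2$-tori constructed above, and let $\mathcal{V}$ be a bounded open set of small masses $(m_1,m_2,m_3,m_4)$; by Lemma~\ref{Lem: Normal Nondegeneracy}, by the non-degeneracy of $\widetilde{F}_{Kep}$ in $D_3,\Lambda_4$, and by the non-degeneracy of $F_{sec,4}$ in $\bar m_4$, combined with analyticity, the masses for which the non-degeneracy hypotheses of Theorems~\ref{Theo: isotropic KAM tori with Callisto} and~\ref{Theo: KAM tori with Callisto} fail are contained in a proper analytic subvariety and hence form a Lebesgue-null subset of $\mathcal{V}$. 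Consider
$$\mathcal{A}=\bigl\{(z,m)\in \mathcal{U}\times\mathcal{V}\ :\ z\ \text{lies on an invariant Lagrangian torus of}\ \widetilde{F}\ \text{with mass parameter}\ m\bigr\}.$$
This set is measurable because the symplectic conjugacies furnished by Thm~\ref{KAM}, applied with the parameter scheme of Cor~\ref{cor: proper deg isochron nondeg}, depend on $m$ in a $C^\infty$ (Whitney-$C^\infty$) manner. For each admissible $m$ the hypothetical frequency map is, by Cor~\ref{cor: isochron nondeg} and Cor~\ref{cor: proper deg isochron nondeg}, a local diffeomorphism of the parameter block onto its image, so the preimage of the full-measure set of $(\bar\gamma,\bar\tau)$-Diophantine frequency vectors is of full measure in the admissible region; hence $\mathcal{A}$ is of full measure in the admissible part of $\mathcal{U}\times\mathcal{V}$.

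Applying the Fubini theorem to $\mathcal{A}\subset\mathcal{U}\times\mathcal{V}$: since $\mathcal{A}$ has full measure over the admissible region and the non-admissible masses form a null set in $\mathcal{V}$, for almost every $m\in\mathcal{V}$ the slice $\mathcal{A}_m=\{z:(z,m)\in\mathcal{A}\}$ has positive Lebesgue measure in $\mathcal{U}$. By construction $\mathcal{A}_m$ is precisely a positive-measure set of invariant Lagrangian tori of $\widetilde{F}$ with mass parameter $m$, all lying in the prescribed small neighbourhood of the continued normally elliptic invariant $2$-tori. Exhausting the space of admissible small masses by countably many such sets $\mathcal{V}$ yields the conclusion for almost all $(m_1,m_2,m_3,m_4)$.

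I expect the only genuine subtlety to be the upgrade from ``positive measure of good masses'', as literally phrased in Theorem~\ref{Theo: KAM tori with Callisto} and in Cor~\ref{cor: isochron nondeg}, to ``almost all masses'': this rests on the fact that a local diffeomorphism (here the hypothetical frequency map in the combined torus--mass parameters) pulls a full-measure set back to a full-measure set, so that, once the non-admissible masses have been deleted as a null set, the set of masses for which the Diophantine condition is attained on a positive-measure set of tori is itself of full measure. Everything else---measurability of $\mathcal{A}$, the uniform choice of the small parameters $0<\mu\ll e\ll 1$ and $0<\bar\delta\ll 1$ over compacta in parameter space, and the Fubini slicing---is routine.
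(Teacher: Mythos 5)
Your proposal is essentially the paper's own argument: the paper disposes of this corollary in one line, deducing it from Theorem~\ref{Theo: KAM tori with Callisto} by the Fubini theorem, exactly as you do (and as is done for the analogous corollary after Theorem~\ref{Theo: KAM tori}), so your construction of the product set $\mathcal{A}$ and the mass-direction slicing is just a fleshed-out version of the intended proof. One small caution: for a fixed $\bar\gamma$ the set of $(\bar\gamma,\bar\tau)$-Diophantine frequencies is only of positive measure with complement of size $O(\bar\gamma)$, so $\mathcal{A}$ is not literally of full measure in the admissible region as you assert --- but since that defect can be made arbitrarily small by shrinking $\bar\gamma$ (at the cost of shrinking the perturbation threshold $\varepsilon=\hbox{Cst}\,\bar\gamma^{k}$, i.e.\ of taking $\mu$, $e$, $\bar\delta$ smaller), the ``almost all masses'' upgrade you correctly single out as the crux goes through, and your treatment of it is in fact more explicit than the paper's.
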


\bibliography{DeSitter}

\begin{thebibliography}{AKN06}

\bibitem[AKN06]{ArnoldEncyclopedia}
V.I. Arnold, V.V. Kozlov, and A.I. Neishtadt.
\newblock {\em {M}athematical aspects of classical and celestial mechanics}.
\newblock Springer, 2006.

\bibitem[BHT90]{BHT}
H.~W. Broer, G.~Huitema, and F.~Takens.
\newblock Unfolding of quasi-periodic tori.
\newblock {\em Mem. Amer. Math. Soc.}, 83(421):1--82, 1990.

\bibitem[BZ]{BZ2}
H.~W. Broer and L.~Zhao.
\newblock On {J}upiter and his {G}alilean satellites: {D}e {S}itter's periodic
  motion and its librations.
\newblock in preparation.

\bibitem[dS09]{DeSitter1909}
W.~de~Sitter.
\newblock {O}n the periodic solutions of a particular case of the problem of
  four bodies.
\newblock {\em KNAW proceedings}, 11:682--698, 1909.

\bibitem[dS25]{DeSitter1925}
W.~de~Sitter.
\newblock Outlines of a new mathematical theory of {J}upiter's satellites.
\newblock {\em Annalen van de sterrewacht te Leiden}, 12:1--55, 1925.

\bibitem[dS31]{DeSitter1931}
W.~de~Sitter.
\newblock {J}upiter's {G}alilean satellites ({G}eorge {D}arwin lecture).
\newblock {\em Monthly Notices of the Royal Astronomical Society}, 91:706--738,
  1931.

\bibitem[F{\'e}j02]{QuasiMotionPlanar}
J.~F{\'e}joz.
\newblock Quasiperiodic motions in the planar three-body problem.
\newblock {\em J. Differential Equations}, 183(2):303--341, 2002.

\bibitem[F{\'e}j04]{FejozStability}
J.~F{\'e}joz.
\newblock D\'emonstration du th\'eor\`eme d'{A}rnold sur la stabilit\'e du
  syst\`eme plan\'etaire (d'apr\`es {H}erman).
\newblock {\em Ergodic Theory Dynam. Systems}, 24(5):1521--1582, 2004.
\newblock Revised version.

\bibitem[F{\'e}j10]{FejozHabilitation}
J.~F{\'e}joz.
\newblock {\em Mouvements p{\'e}riodiques et quasi-p{\'e}riodiques dans le
  probl{\`e}me des n corps}.
\newblock Habilitation {\`a} diriger des recherches, Universit{\'e} Pierre et
  Marie Curie-Paris VI, 2010.

\bibitem[F{\'e}j11]{FejozMoser}
J.~F{\'e}joz.
\newblock The normal form of {M}oser and applications.
\newblock preprint (in revision), 2011.

\bibitem[F{\'e}j13]{Fejoz2011KAM}
J.~F{\'e}joz.
\newblock Introduction to {KAM} theory.
\newblock 2013.

\bibitem[LR95]{LaskarRobutel}
J.~Laskar and P.~Robutel.
\newblock Stability of the planetary three-body problem.
\newblock {\em Celestial Mech. Dynam. Astronom.}, 62(3):193--217, 1995.

\bibitem[Poi92]{Poincare1892}
H.~Poincar{\'e}.
\newblock {\em {L}es m{\'e}thodes nouvelles de la m{\'e}canique c{\'e}leste},
  volume~1.
\newblock Gauthier-Villars, Paris, 1892.

\bibitem[Tis89]{Tisserand1889}
F.~Tisserand.
\newblock {\em Trait{\'e} de m{\'e}canique c{\'e}leste}, volume~1.
\newblock Gauthier-Villars, 1889.

\bibitem[Zha14]{QuasiLunar}
L.~Zhao.
\newblock Quasi-periodic solutions of the spatial lunar three-body problem.
\newblock {\em Celestial Mech. Dynam. Astronom.}, 119(1):91--118, 2014.

\end{thebibliography}
\smallskip

\noindent Henk Broer, Rijksuniversiteit Groningen: h.w.broer@rug.nl \\
Lei Zhao, \quad Rijksuniversiteit Groningen:  l.zhao@rug.nl \\
\end{document}